\newtheorem{Proposition}{Proposition}
\newtheorem{Theorem}{Theorem}
\newtheorem{Claim}{Claim}
\newtheorem{Lemma}{Lemma}
\newtheorem{case}{Case}
\newtheorem{casefirst}{Case}
\newtheorem{Subcase}{Subcase}[case]
\newtheorem{Problem}{Problem}
\newtheorem{obs}{Observation}
\newtheorem{definition}{Definition}
\begin{document}
\title{Decompositions into two linear forests of bounded lengths}

\author{Rutger Campbell\thanks{Supported by the Institute for Basic Science (IBS-R029-C1).} \\rutger@ibs.re.kr \\Discrete Mathematics Group, Institute for Basic Science (IBS),\\ Daejeon, Republic of Korea \and
Florian Hörsch\\florian.hoersch@tu-ilmenau.de \\TU Ilmenau, Weimarer Straße 25, Ilmenau, Germany, 98693  \and 
Benjamin Moore\thanks{Supported by project 22-17398S (Flows and cycles in graphs on surfaces) of
Czech Science Foundation.} \\ brmoore@iuuk.mff.cuni.cz \\ Institute of Computer Science, Charles University, Prague, Czechia}


\maketitle

\begin{abstract}
For some $k \in \mathbb{Z}_{\geq 0}\cup \infty$, we call a linear forest $k$-bounded if each of its components has at most $k$ edges. 
We will say a $(k,\ell)$-bounded linear forest decomposition of a graph $G$ is a partition of $E(G)$ into the edge sets of two linear forests $F_k,F_\ell$ where $F_k$ is $k$-bounded and $F_\ell$ is $\ell$-bounded. 
We show that the problem of deciding whether a given graph has such a decomposition is NP-complete if both $k$ and $\ell$ are at least $2$, NP-complete if $k\geq 9$ and $\ell =1$, and is in P for $(k,\ell)=(2,1)$. Before this, the only known NP-complete cases were the $(2,2)$ and $(3,3)$ cases. Our hardness result answers a question of Bermond et al. from 1984.
We also show that planar graphs of girth at least nine decompose into a linear forest and a matching, which in particular is stronger than $3$-edge-colouring such graphs.
\end{abstract}

\section{Introduction}
In this paper, all graphs are finite, with no loops but possibly with parallel edges.  We are interested in \textit{graph decomposition} problems. Given a graph $G$, we say that a collection $(H_1,\ldots,H_t)$ of spanning subgraphs of $G$ is a {\it decomposition} of $G$ if $(E(H_1),\ldots,E(H_t))$ is a partition of $E(G)$. 
Decompositions are only interesting if further constraints are imposed, so we will focus on decompositions where various parts are forced to belong to a family of graphs, and possibly the number of parts is restricted. We will be interested in the algorithmic complexity of such questions. 

For example, a natural question is to ask if a graph $G$ decomposes into parts that are each 
isomorphic to a fixed graph $H$. This is NP-complete for almost all graphs $H$, and a dichotomy theorem is known \cite{Hoyer}. On the other hand, one could ask for decompositions into a family of natural graphs, such as forests. Of course, every graph admits a decomposition into forests, so this problem is only interesting if we restrict the number of parts. Even with this restriction, the famous Nash-Williams Theorem \cite{nash} characterizes when a graph decomposes into $k$ forests, and this characterization gives rise to polynomial-time algorithms. 

This result of Nash-Williams raises the question of whether or not decompositions exist when we impose further restrictions on the trees of the decomposition. The literature dedicated to these problems is already rich. For instance, many authors consider decompositions where one tree has bounded degree. Balogh et al \cite{conj24} showed that planar graphs can be decomposed into three forests, where one of these forests has maximum degree $8$. Gonçalves \cite{GONCALVES2009314} later improved their result to show that maximum degree $4$ is achievable. For general graphs, Jiang and Yang proved the famous Nine Dragon Tree Theorem  \cite{ndt}, which gives sharp bounds on when a graph decomposes into $k+1$ forests where one of these forests has maximum degree at most $d$.
Similar lines of research have been conducted when we ask for one of the forests to have bounded component size, and the Strong Nine Dragon Tree conjecture, posed by Montassier et al in \cite{sndtck1d2} remains wide open. Nevertheless, there are some partial results, for example by Mies and the third author, \cite{SNDTisolates}, by Kim et al \cite{k=1d=2}, and by Yang \cite{YANG201840}.

We are interested in a problem where the degree and the component size restrictions are combined and imposed on all forests in the decomposition rather than on a single one. Firstly, we consider linear forests which are defined as vertex-disjoint collections of paths. Observe that these are exactly forests of maximum degree at most 2. Decompositions into linear forests were first considered by Akiyama, Ekoo and Harary \cite{Akiyama1981CoveringAP}. In order to also incorporate the component size restriction, we study {\it $k$-bounded linear forests}, meaning linear forests where each component has at most $k$ edges. Observe that a $1$-bounded linear forest is a matching.

Structural questions on the decompositions of regular graphs into linear forests of bounded length have been considered by Alon, Teague and Wormald \cite{atw}. Additionally, Thomassen showed that every cubic graph decomposes into two $5$-bounded linear forests \cite{THOMASSEN1999100}. From an algorithmic point of view, the above observation means that the problem of decomposing a graph into $t$ linear forests of bounded length for some positive integer $t$ generalizes the problem of decomposing the graph into $t$ matchings.

A moment of thought shows that this is the $t$-edge-colouring problem, which is NP-complete for all $t \geq 3$, see \cite{doi:10.1137/0210055}. Here, recall that a \textit{$t$-edge-colouring} of a graph $G$ is a map $f:E(G) \to \{1,\ldots,k\}$ such that for all incident edges $e_{1}$ and $e_{2}$ we have that $f(e_{1}) \neq f(e_{2})$. Thus decomposing into linear forests is closely related to edge-colouring graphs. Therefore it is no surprise that it is NP-complete to decide if a graph decomposes into $k$-linear forests, for any fixed $k \geq 2$ \cite{treespathsandspiders}. We will be interested in the algorithmic complexity of the decision problem where we only have two parts. More precisely, we consider the following problem class:\medskip

\noindent \textbf{$(k,\ell)$-bounded linear forest decomposition ($(k,\ell)$-BLFD):}
\smallskip

\noindent\textbf{Input:} A graph $G$.
\smallskip

\noindent\textbf{Question:} Does $G$ decompose into a $k$-bounded linear forest and an $\ell$-bounded linear forest?
\medskip

One important case has been settled by Péroche who proved the following result:
\begin{Theorem} [\cite{PEROCHE1984195}]
\label{inftyinfty}
    $(\infty,\infty)$-BLFD is NP-complete.
\end{Theorem}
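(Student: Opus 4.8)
The plan is to prove membership in NP and NP-hardness separately. Membership is immediate: a decomposition into two linear forests is exactly an edge-$2$-colouring $c\colon E(G)\to\{a,b\}$ in which each colour class has maximum degree at most $2$ and contains no cycle, and both conditions are verifiable in polynomial time. So I would first reformulate a $(\infty,\infty)$-decomposition as such a colouring, after which two structural facts become available as gadget primitives. First, a pair of parallel edges between two vertices must receive the two \emph{distinct} colours, since two parallel edges of one colour already form a monochromatic cycle; this gives a \textbf{forced binary gadget} with exactly two admissible colourings, which I use to store a truth value. Second, a triangle can never be monochromatic, because a monochromatic triangle is a monochromatic cycle; this will serve as the clause primitive. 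As supporting intuition for rigidity I would also note that every vertex of degree $4$ sees exactly two edges of each colour (so a $4$-regular subgraph is never decomposable, each class being $2$-regular hence a union of cycles), and that degree-$4$ vertices therefore let me force two incident edges to agree or disagree in colour, which is the mechanism for building wires.

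For hardness I would reduce from \textsc{Not-All-Equal 3-SAT}. Each variable is represented by a forced binary gadget, whose orientation (which of its two parallel edges is coloured $a$) records the truth value, and I propagate this value along \emph{wires} assembled from further parallel pairs together with degree-$4$ forcing vertices, arranging each wire to be internally acyclic so that it contributes no cycle of its own. The polarity of a literal is implemented by inserting or omitting a colour-swap inside its wire. Each clause is a triangle whose three edges are the terminal edges of the three incident literal-wires; since the triangle cannot be monochromatic, its three edges are never all the same colour, which is precisely the not-all-equal constraint on the three literals. Because both \textsc{Not-All-Equal 3-SAT} and the admissible colourings are symmetric under a global flip (swapping $a$ and $b$), the convention ``colour $a \leftrightarrow$ true'' can be fixed arbitrarily, and the match between the two problems is exact.

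The two directions of correctness should then be routine once the gadgets are shown to be rigid: a not-all-equal satisfying assignment yields a valid colouring built gadget by gadget, and conversely any valid colouring, read off the variable gadgets, satisfies every clause because no clause triangle is monochromatic. The hard part will be the \emph{global} acyclicity requirement, which is the only non-local constraint in the problem. I must guarantee that no monochromatic cycle can close up \emph{across} several gadgets, since such a cycle would either invalidate an intended colouring or impose a spurious constraint; this is exactly what forces the wires to be acyclic and the gadgets to be attached in a tree-like fashion, so that the only cycles present in $G$ are the clause triangles, where non-monochromaticity is precisely the behaviour I want. Establishing this rigidity, and checking that each gadget admits the claimed colourings and no others, is where most of the work lies and is what completes the proof that $(\infty,\infty)$-BLFD is NP-complete.
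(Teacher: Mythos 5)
First, note that the paper does not prove this statement at all: Theorem~\ref{inftyinfty} is quoted from P\'eroche \cite{PEROCHE1984195}, so there is no internal proof to compare against. Your NP-membership argument and your general instincts (parallel-edge forcers, a cycle-based clause gadget, a not-all-equal style reduction) do match the toolkit the paper uses for its \emph{own} hardness results in Section~\ref{neg}, but your sketch as written has gaps that are not just ``routine verification.''

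The central one is that a single pair of parallel edges cannot store a truth value. Its two admissible colourings are exchanged by the automorphism swapping the two parallel edges, which fixes everything outside the gadget; no vertex degree, adjacency, or connectivity seen by the rest of the graph distinguishes them, so nothing is propagated. This is why the paper's variable gadgets (e.g.\ in Lemma~\ref{hard1} and Lemma~\ref{exists_gadget}) are built on longer cycles with forcers attached, so that the truth value is recorded by \emph{which of two non-parallel edges} lands in which forest. Second, your clause gadget does not survive inspection: if the three triangle edges are themselves the terminal edges of the three literal wires, each triangle vertex lies on two wires, and the natural degree-$4$ forcing at such a vertex would force its two incident triangle edges to receive different colours --- impossible around an odd cycle, so the instance becomes trivially infeasible; conversely, if you instead attach the wires by pendant edges at the triangle vertices, the triangle no longer enforces not-all-equal on the wire colours (one can colour one triangle edge with the majority colour and the other two with the minority colour). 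The paper's clause gadget is a $6$-cycle with three degree-$3$ attachment vertices for exactly this reason. Finally, the claim that the gadgets can be ``attached in a tree-like fashion'' is not available: the clause--variable incidence structure of a 3-SAT instance is generically far from a tree, so you must instead design wires and forcers that locally block monochromatic through-traffic (the paper's forcers do this by pinning the tip vertex to have degree $0$ in one of the two forests). Until the wire, the clause attachment, and the cross-gadget acyclicity are each pinned down, the reduction is not yet a proof.
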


If $k=1$ and $\ell=1$, then this is simply asking if a graph decomposes into two matchings, or equivalently a $2$-edge-colouring of a graph, which is easily seen to be polynomial-time solvable. Interestingly, to the best of the authors' knowledge, it is open if a graph can decompose into a matching and a $2$-bounded linear forest. Our first result is that this is polynomial-time solvable:

\begin{Theorem}\label{21poly}
$(2,1)$-BLFD is polynomial-time solvable. 
\end{Theorem}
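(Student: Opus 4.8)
The plan is to reduce the whole question to a single maximum-matching computation. Since every vertex must have degree at most $1$ in the matching part $F_1$ and at most $2$ in the $2$-bounded linear forest part $F_2$, any graph with a $(2,1)$-decomposition has maximum degree at most $3$; so I would first declare every instance with $\Delta(G)\ge 4$ a no-instance and assume $\Delta(G)\le 3$. Write $V_i$ for the set of vertices of degree $i$. The guiding observation is that a $(2,1)$-decomposition is exactly a matching $M$ (namely $F_1$) for which $G-M$ is a $2$-bounded linear forest, and that once $M$ saturates every vertex of $V_3$ (which is forced, as otherwise $G-M$ has a vertex of degree $3$) the graph $G-M$ automatically has maximum degree at most $2$. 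For such a graph, being a $2$-bounded linear forest is equivalent to the purely local condition that no edge of $G-M$ joins two vertices that both have degree $2$ in $G-M$; this simultaneously rules out longer paths and cycles.

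Next I would translate the degrees in $G-M$ back into covering conditions on $M$, turning this local condition into the clean list: (i) $M$ is a matching; (ii) every edge with both endpoints in $V_3$ lies in $M$ (otherwise it joins two degree-$2$ vertices of $G-M$); (iii) $M$ saturates $S:=V_3\cup(V_2\cap N(V_3))$; and (iv) no two adjacent vertices of $V_2$ are both left unsaturated by $M$. I would then prove that $G$ admits a $(2,1)$-decomposition if and only if some matching satisfies (i)--(iv): the forward direction is a direct check, and the converse is exactly the statement that (ii) and (iv), given that $V_3$ is saturated, forbid precisely the edges of $G-M$ between two degree-$2$ vertices.

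The key step, and the one that makes the algorithm polynomial, is a simplification of (i)--(iv): I claim they are satisfiable if and only if there is a matching that contains all edges with both endpoints in $V_3$ and saturates $S$. The nontrivial direction is to obtain condition (iv) for free. Given any matching $M_0$ satisfying (ii) and (iii), I would greedily extend it to a \emph{maximal} matching $M$; extending only saturates further vertices, so (ii) and (iii) are preserved, while the unsaturated set of a maximal matching is independent, which is exactly (iv). Everything therefore reduces to deciding whether $G$ has a matching using all the forced $V_3$--$V_3$ edges and saturating $S$. After checking that the forced edges themselves form a matching (equivalently $G[V_3]$ has maximum degree at most $1$, otherwise we output no) and removing their endpoints, this is the standard ``matching saturating a prescribed vertex set'' problem, solvable in polynomial time via Edmonds' algorithm.

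The main obstacle I anticipate is not the final matching computation but pinning down the reformulation exactly: one must verify carefully that the local ``no two adjacent degree-$2$ vertices'' condition really is equivalent to (ii) together with (iv) once $V_3$ is saturated, and that the degenerate cases are handled correctly — parallel edges between two vertices of $V_3$, and degree-$2$ vertices both of whose neighbours lie in $V_3$, which should force a no-instance precisely through the failure to saturate $S$. The remainder is bookkeeping: confirming that the maximal-extension argument leaves (ii) and (iii) intact, and converting a matching satisfying (i)--(iv) back into the explicit pair $(F_2,F_1)=(G-M,M)$.
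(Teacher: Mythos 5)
Your proposal is correct, but it takes a genuinely different route from the paper's. The paper also starts by rejecting any instance with a vertex of degree at least $4$, but then contracts the maximal degree-$2$ segments of $G$ into an auxiliary bipartite graph whose vertices carry degree-constraint sets such as $\{1\}$, $\{0,2\}$ and $\{1,2\}$, and invokes Cornu\'ejols' polynomial-time algorithm for the small gap general factor problem; the non-interval set $\{0,2\}$ (arising from length-$3$ segments between two degree-$3$ vertices) is exactly what forces that machinery. You instead stay in the original graph and characterize the decompositions locally: $M=F_1$ works if and only if it is a matching containing every edge between two degree-$3$ vertices, saturating $V_3\cup(V_2\cap N(V_3))$, and leaving no two adjacent degree-$2$ vertices exposed. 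Your characterization is equivalent to the paper's segment-by-segment constraints (for instance, the $\{0,2\}$ constraint on a length-$3$ segment $u z_1 z_2 v$ falls out of requiring $z_1,z_2$ to be saturated while $M$ is a matching), and your key extra observation --- that the last condition comes for free by extending any matching satisfying the others to a maximal one, since the exposed vertices of a maximal matching form an independent set --- collapses everything to the classical problem of a matching containing prescribed independent forced edges and saturating a prescribed vertex set. That is a degree-constrained subgraph problem with interval constraints ($\{1\}$ and $\{0,1\}$), solvable by standard reductions to perfect matching and Edmonds' algorithm, so your argument is more elementary and self-contained than the paper's (no appeal to Cornu\'ejols is needed). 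One aside in your write-up is wrong and should be dropped: a degree-$2$ vertex with both neighbours of degree $3$ does \emph{not} force a no-instance --- it merely must be saturated, which is usually achievable (take two degree-$3$ vertices with a common degree-$2$ neighbour and two pendant leaves each; matching one of the two middle edges plus a pendant edge at the other hub gives a valid decomposition). Since your algorithm never special-cases that configuration and only tests saturability, this slip does not affect correctness.
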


On the negative side, we generalize known hardness results to obtain nearly a full classification. In particular, in \cite{smallsubgraphcomponents} the authors show that $(2,2)$-BLFD is NP-complete and in \cite{BERMOND1984123}, the authors show that $(3,3)$-BLFD is NP-complete. Bermond et al. \cite{BERMOND1984123} conjectured that $(k,k)$-BLFD is NP-complete for all $k \geq 2$. We prove the conjecture, and in fact show this result holds for almost all values of $k,\ell$. More precisely:

\begin{Theorem}\label{mainhard}
For any $k,\ell \in \{2,\ldots\} \cup \{\infty\}$, we have that $(k,\ell)$-BLFD is NP-complete. Additionally, for $\ell=1$, and all $k \in \{9,\ldots\} \cup \infty$, we have that $(k,\ell)$-BLFD is NP-complete. 
\end{Theorem}

This solves the complexity problem except for the cases where $\ell =1$ and $k \in \{3,4,5,6,7,8\}$. 

Note that a decomposition of a graph into a linear forest and a matching gives rise to a $3$-edge-colouring of this graph, as any linear forest can be decomposed into two matchings, thus using these two matchings as colour classes, and the other matching as the third colour class we obtain a $3$-edge-colouring. Further, it is well known that planar triangulations admit a $4$-colouring if and only if their dual graphs admit a $3$-edge-colouring. As there are subcubic planar graphs which are not duals of triangulations, it is interesting to ask which subcubic planar graphs are $3$-edge-colourable. We define the \textit{girth} of a graph as the length of the shortest cycle in the graph. It is open whether every subcubic planar graph with girth at least six is $3$-edge-colourable. It was proved by Kronk, Radlowski and Franen \cite{Kronk19743} that every subcubic planar graph girth at least 8 is 3-edge-colorable. Only recently, Bonduelle and Kardo\v{s} \cite{BONDUELLE2022113002} improved this constant to 7. As mentioned before, admitting a decomposition into a linear forest and a matching is stronger than $3$-edge-colorability, and thus it is interesting to ask if one can strengthen the $3$-edge-colouring results to decompositions into linear forests and a matching. We show that for sufficiently large girth, this is the case:

\begin{Theorem}\label{girth9}
Every subcubic planar graph with girth at least $9$ decomposes into a linear forest and a matching. 
\end{Theorem}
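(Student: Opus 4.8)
The plan is to first reformulate the statement as a question about a single matching. Since $G$ is subcubic, at every vertex $v$ of degree $3$ a decomposition into a linear forest $F$ and a matching $M$ is forced: the matching uses at most one edge at $v$, so exactly one edge at $v$ lies in $M$ and the other two lie in $F$. Conversely, if $M$ is \emph{any} matching that saturates every degree-$3$ vertex, then $G-M$ has maximum degree at most $2$, so $G-M$ is a linear forest precisely when it is acyclic. Hence Theorem~\ref{girth9} is equivalent to the assertion that every subcubic planar graph of girth at least $9$ admits a matching $M$ saturating all degree-$3$ vertices with $G-M$ a forest, and I would work with this reformulation throughout.

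Next I would take a counterexample $G$ minimizing $|V(G)|+|E(G)|$ and build a list of \emph{reducible configurations}, that is, local structures that cannot occur in $G$. The warm-up case is a vertex $v$ of degree at most $1$: after deleting it and decomposing $G-v$ by minimality, the pendant edge can always be re-inserted---placed in $M$ if its neighbour is a degree-$3$ vertex left unsaturated, and in $F$ otherwise---since a pendant edge can never close a cycle, so $G$ has minimum degree $2$. The substantial configurations concern how degree-$2$ vertices, and the threads they form, attach to degree-$3$ vertices; I would show that short threads and small clusters of low-degree vertices around a degree-$3$ vertex are reducible by deleting a bounded piece, decomposing the remainder, and carefully recolouring the piece back.

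Finally I would run a discharging argument. Assign each vertex $v$ the charge $d(v)-4$ and each face $f$ the charge $|f|-4$; by Euler's formula the total charge is $-8$. Because $G$ has girth at least $9$, every face has charge at least $5$, so all negative charge sits on the degree-$2$ and degree-$3$ vertices, carrying $-2$ and $-1$ respectively. With discharging rules that send charge from the large faces to their incident low-degree vertices, the absence of the reducible configurations should force every vertex and face to finish with non-negative charge, contradicting the total $-8$; the girth bound $9$ is calibrated precisely so that this arithmetic balances.

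The main obstacle is proving reducibility. The matching-saturation requirement is local, but the forest (acyclicity) requirement is global, so whenever I delete a small piece and re-insert it I must simultaneously guarantee that every degree-$3$ vertex remains saturated and that no recoloured edge closes a cycle elsewhere in $G-M$. Controlling these two competing constraints at once---while also ensuring the reductions preserve planarity and girth at least $9$, which rules out naive operations such as suppressing a degree-$2$ vertex (this would drop the girth to $8$)---is the technical heart of the argument and dictates exactly which configurations can be declared reducible.
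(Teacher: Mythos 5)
Your reformulation (a matching saturating every $3$-vertex with acyclic complement) and your first reductions (no vertices of degree at most $1$, and implicitly no adjacent $2$-vertices) are fine and match the opening moves of the paper. The gap is in the claim that ``the girth bound $9$ is calibrated precisely so that this arithmetic balances'': with only those easy reducible configurations it does not balance. The no-adjacent-$2$-vertices reduction still allows a $9$-face to carry four $2$-vertices. In your scheme ($d(v)-4$, $|f|-4$) such a face has charge $5$ but must feed four $2$-vertices \emph{and} five $3$-vertices (which carry charge $-1$ each and lie on three faces), a demand of $4+5/3>5$; in the standard scheme ($2d(v)-6$, $|f|-6$), where $3$-vertices are neutral, the face has charge $3$ against a demand of $4$. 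Either way the counting fails, so the proof cannot close without further structural input.

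That missing input is the actual heart of the paper's argument: one must show that every $9$-face and every $10$-face of a minimal counterexample is incident to at least six $3$-vertices, i.e.\ strictly fewer $2$-vertices than the parity/adjacency constraints alone permit. These are not reducible by ``deleting a bounded piece and recolouring'': their proofs delete all edges of the face, apply minimality to an auxiliary graph obtained by adding an apex vertex joined to three designated $3$-vertices of the face (to force the pendant edges at those vertices into the matching), and then invoke a planarity argument --- two paths in $G-E(F)$ joining interleaved pairs of boundary vertices must share a vertex, so some pair of boundary vertices at face-distance $2$ is joined by a path of length at most $6$ outside the face, creating a cycle of length at most $8$ and contradicting the girth hypothesis. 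This interplay between minimality, the forced behaviour of a decomposition on a face boundary (itself a lengthy case analysis), and the planar crossing/girth argument is what makes girth $9$ suffice; your proposal does not identify these configurations or any mechanism for proving them reducible.
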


We leave it as an open problem whether the girth bound can be improved. Rather surprisingly, it was proven by Montassier et al. that there are planar graphs of girth exactly $7$ that do not even decompose into a forest and a matching \cite{sndtck1d2}. Nevertheless, their construction requires vertices of degree $4$, and it does not appear easy to modify it to find subcubic graphs that do not decompose into a linear forest and a matching.

Our paper is structured as follows. First, in Section \ref{pos}, we prove Theorem \ref{21poly} relying on the fact that the so-called small gap general factor problem was shown to be polynomial-time solvable by Cornuéjols \cite{CORNU}. Next, we prove Theorem \ref{mainhard} in Section \ref{neg}. Finally, in Section \ref{gir9}, we prove Theorem \ref{girth9} using the discharging method.  

We define some notation that will be useful throughout. For a graph $G$, and a vertex $v \in V(G)$, we let $d_{G}(v)$ denote the degree of $v$ in $G$. If $S \subseteq E(G)$, we will  use $d_{S}(v)$ to mean the degree of $v$ in the graph induced by the edge set of $S$.

\section{Positive algorithmic results}\label{pos}

In this section, we prove Theorem \ref{21poly} by giving a polynomial-time reduction to the small gap general factor problem, and then relying on a theorem of Cornu{\'e}jols which shows this problem is solvable in polynomial time.

In order to explain the result of Cornuéjols we rely on, we first need some notation. A set $M \subseteq \mathbb{Z}_{\geq 0}$ is called a {\it small gap set} if for every $i \in \mathbb{Z}_{\geq 0}$ with $\{i,i+1\}\cap M=\emptyset$, we have either $\{0,\ldots,i+1\}\cap M=\emptyset$ or $\{i,i+1,\ldots\}\cap M=\emptyset$. The small gap general factor problem is formulated as follows:
\medskip

\noindent \textbf{Small gap general factor problem (SGGFC):}
\smallskip

\noindent\textbf{Input:} A graph $H$, a collection of small gap sets $\{M_v:v \in V(H)\}$.
\smallskip

\noindent\textbf{Question:} Is there a set $S \subseteq E(G)$ such that $d_S(v)\in M_v$ for all $v \in V(H)$?
\medskip

We rely on the following result of Cornuéjols \cite{CORNU}:

\begin{Theorem}
\label{Corn}
There exists a polynomial-time algorithm to solve SGGFC.
\end{Theorem}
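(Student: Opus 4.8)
The plan is to reduce SGGFC to the ordinary maximum matching problem, which is solvable in polynomial time by Edmonds' blossom algorithm, and to use that algorithm as the engine. First I would build, for each vertex $v \in V(H)$ with incident edges $e_1,\dots,e_d$ (where $d = d_H(v)$), a gadget $\Gamma_v$: a small graph containing $d$ distinguished \emph{connector} vertices $c^v_1,\dots,c^v_d$, one per incident edge. Form an auxiliary graph $H'$ by taking the disjoint union of all the gadgets and, for each edge $e = uv$ of $H$, adding the edge $c^v_e c^u_e$ joining the two connectors associated to $e$. The encoding is: a matching $M'$ of $H'$ places $e$ into $S$ exactly when $c^v_e c^u_e \in M'$; then the number of edges of $S$ at $v$ equals the number of connectors of $\Gamma_v$ matched \emph{outside} $\Gamma_v$, so $d_S(v) = |U_v|$ where $U_v \subseteq \{c^v_i\}$ is the set of externally matched connectors.

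The heart of the proof is to design $\Gamma_v$ so that it \emph{realizes} the set $M_v$: for every $U \subseteq \{c^v_i\}$, a maximum matching of $\Gamma_v - U$ leaves a fixed number $r_v$ of internal vertices unsaturated exactly when $|U| \in M_v$, and strictly more than $r_v$ unsaturated otherwise (by symmetry the behaviour can be made to depend only on $|U|$, not on which connectors lie in $U$). Standard Tutte-type gadgets realize an interval constraint $a \le |U| \le b$, with upper bounds enforced by blocker vertices and lower bounds by forced edges. The small-gap hypothesis says that inside the range $[\min M_v, \max M_v]$ the forbidden degrees are \emph{isolated} single values, so it suffices to additionally realize a single forbidden value $j$ flanked by allowed values $j-1, j+1$. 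I would do this with an odd "blossom" sub-gadget: an odd component always has exactly one unsaturated vertex in any matching, which produces a controlled $\pm 1$ of slack and lets a threshold gadget distinguish $j-1$ and $j+1$ from $j$ while accepting both of the former. Since a small-gap set is a disjoint union of intervals separated by single forbidden values, layering interval gadgets with these blossom corrections realizes $M_v$ with a gadget of size polynomial in $d$.

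A parity subtlety forces the use of maximum matching rather than perfect matching: if $\Gamma_v - U$ had a perfect matching then $|U| \equiv |V(\Gamma_v)| \pmod 2$, so a perfect-matching gadget could realize degrees of only one parity, which is too weak for sets such as $\{0,1,2\}$. The blossom residue $r_v$ circumvents this. With the $r_v$ fixed by the construction, set the target $\tau = \tfrac{1}{2}\big(|V(H')| - \sum_{v} r_v\big)$; then $H'$ has a matching of size $\tau$ if and only if there is a choice of $S$ with $d_S(v) \in M_v$ for all $v$. One direction is immediate from the encoding; for the converse, take a maximum matching of $H'$, read off $U_v$ at each gadget, and use realizability to certify $|U_v| \in M_v$. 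Running Edmonds' algorithm on $H'$ and comparing its matching number to $\tau$ then decides the instance, and $S$ is reconstructed directly from the matching.

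The main obstacle is the second step: constructing the single-value-gap blossom gadget and proving that small-gap sets are realizable. This is where the hypothesis is used in full, since a gap of length at least two would ask a matching gadget to forbid two consecutive values $\{j, j+1\}$ while allowing $j-1$ and $j+2$, which no matching gadget can achieve, consistent with the fact that general factors become NP-hard once gaps of length two are permitted. Once realizability is established, correctness of the reduction and the polynomial running time (polynomial-size gadgets and a single matching computation) follow routinely, yielding Theorem \ref{Corn}.
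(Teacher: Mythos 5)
First, a point of comparison: the paper does not prove Theorem \ref{Corn} at all — it is imported as a black box from Cornuéjols \cite{CORNU}, so your attempt has to be judged as a from-scratch proof of Cornuéjols' theorem. That theorem is not proved in the literature by a gadget reduction to matching; Cornuéjols' argument is a direct combinatorial algorithm (an augmenting-walk/blossom-type generalization of Edmonds' algorithm), and this is not an accident, as explained next.

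There is a genuine gap in your proposal, and it is fatal to the stated plan. The heart of your argument is the realizability lemma: for each $v$ you need a gadget $\Gamma_v$ with connector set $C_v$ and a \emph{fixed} residue $r_v$ such that a maximum matching of $\Gamma_v - U$ leaves exactly $r_v$ vertices unsaturated when $|U| \in M_v$, and strictly more otherwise; your threshold $\tau = \tfrac{1}{2}\bigl(|V(H')| - \sum_v r_v\bigr)$ and the converse direction of your correctness argument require precisely this. No such gadget can exist once $M_v$ contains two consecutive integers. Indeed, for any graph $\Gamma$ and any $U \subseteq V(\Gamma)$ the deficiency obeys the parity identity
\[
\mathrm{def}(\Gamma - U) \;=\; |V(\Gamma)| - |U| - 2\nu(\Gamma - U) \;\equiv\; |V(\Gamma)| + |U| \pmod 2 ,
\]
so the deficiencies at $|U| = j$ and $|U| = j+1$ have opposite parities and cannot both equal the same fixed number $r_v$. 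Sets such as $\{1,2\}$ and $\{0,1,2\}$ are small-gap sets — and are exactly among the sets $M_{y_P}$ that this paper feeds into SGGFC — yet they are unrealizable in your sense. The ``blossom residue'' does not circumvent this: attaching an odd component shifts the deficiency by one uniformly, but it cannot make the parity of $\mathrm{def}(\Gamma - U)$ independent of $|U|$, and replacing the fixed $r_v$ by ``$r_v$ or $r_v+1$ depending on parity'' destroys the global threshold accounting (a matching could then trade a parity saving in one gadget against a constraint violation in another). This per-vertex parity obstruction is exactly why the classical reduction of $[g,f]$-factor problems to $f$-factors uses a single \emph{global} apex vertex absorbing parity discrepancies across all vertices, rather than vertex-local matching gadgets, and why the small-gap general factor problem required genuinely new algorithmic machinery rather than a reduction to maximum matching. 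Your outer framework (encoding $S$ by cross edges, reading $d_S(v)$ off the externally matched connectors) is fine; what fails is the existence of the gadgets it runs on, so the proposal does not establish the theorem.
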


\subsection{The construction}
\label{thereduction}
We are now ready to proceed to the main proof of Theorem \ref{21poly}. We start off with an easy but crucial observation:

\begin{obs}
If $G$ is an instance of $(2,1)$-BLFD and there exists a vertex of degree at least $4$ in $G$, then this is a no-instance of $(2,1)$-BLFD.
\end{obs}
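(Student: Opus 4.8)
The plan is to prove the contrapositive directly via an elementary degree count, exploiting the definitional degree caps that the two parts of a $(2,1)$-decomposition impose at every vertex. Suppose toward a contradiction that $G$ admits a $(2,1)$-bounded linear forest decomposition $(F_2, F_1)$, so that $E(G)$ is partitioned into $E(F_2)$ and $E(F_1)$, where $F_2$ is a $2$-bounded linear forest and $F_1$ is a matching. The key facts I would invoke are purely structural: a linear forest is a disjoint union of paths, so every vertex has degree at most $2$ in it; and a matching (equivalently, a $1$-bounded linear forest) has every vertex of degree at most $1$.

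Next I would fix an arbitrary vertex $v \in V(G)$ and observe that since $E(F_2)$ and $E(F_1)$ partition the edges incident to $v$, we have $d_G(v) = d_{E(F_2)}(v) + d_{E(F_1)}(v)$. Applying the two degree bounds above gives $d_{E(F_2)}(v) \leq 2$ and $d_{E(F_1)}(v) \leq 1$, and hence $d_G(v) \leq 3$ for every vertex $v$. Consequently, no decomposition can exist once some vertex has degree at least $4$, which is exactly the claimed statement once we take the contrapositive.

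There is essentially no obstacle here: the argument is a one-line summation of the maximum degrees contributed by each part, and the only thing to be careful about is citing the correct degree cap for each of the two forests (namely $2$ for the $2$-bounded linear forest and $1$ for the matching). I would keep the write-up to a couple of sentences, since the content is immediate from the definitions of linear forest and matching and requires no case analysis or auxiliary construction.
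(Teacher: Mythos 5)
Your argument is correct and is essentially identical to the paper's own proof: both bound the degree of any vertex by $2$ in the $2$-bounded linear forest and by $1$ in the matching, and conclude that a vertex of degree at least $4$ is an obstruction. No issues.
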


\begin{proof}
In any $(2,1)$-BLFD decomposition, a vertex can be incident to at most one edge belonging to the matching, and to at most $2$ edges in the $2$-bounded linear forest. Thus if a graph has a vertex of degree at least four, there is no such $(2,1)$-BLFD.
\end{proof}

With that, for the rest of the section, we assume that we have an instance $G$ of $(2,1)$-BLFD and for every vertex $v \in V(G)$, we have $d_{G}(v) \leq 3$. To give the construction, we need some definitions.

Let $X_3=\{v \in V(G):d_G(v)=3\}$. Let $\mathcal{P}$ be the set containing the unique collection of paths in $G$ whose endvertices are of degree $1$ or $3$ in $G$ and all of whose interior vertices are of degree 2 in $G$ and also the set of cycles in $G$ containing at most one vertex in $X_3$. For a cycle $C$ in $\mathcal{P}$ containing a vertex of $X_{3}$, we will let the unique vertex $v \in V(C) \cap X_{3}$ be the \textit{endvertex} of $C$. For all cycles $C \in \mathcal{P}$ where $C \cap X_{3} = \emptyset$, we pick an arbitrary vertex $v\in V(C)$ and designate it as the endvertex of $C$. 
We refer to all vertices of a cycle $C$ that are not the endvertex as interior vertices.  Observe that $\{E(P):P \in \mathcal{P}\}$ is a partition of $E(G)$. For an illustration, see Figure \ref{corn1}.
\begin{figure}[h!]
    \centering
        \includegraphics[width=.9\textwidth]{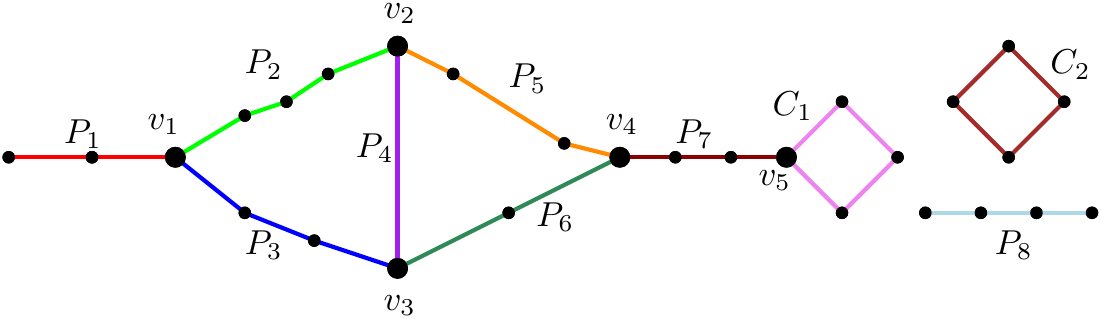}
        \caption{An example of the set $\mathcal{P}$ obtained from a subcubic graph. We have $\mathcal{P}=\{P_{1},\ldots,P_8,C_1,C_2\}$ and the colors of the edges indicate which element of $\mathcal{P}$ the edge belongs to. The vertices in $X_3$ are drawn bigger and marked as $v_1,\ldots,v_5$.}\label{corn1}
\end{figure}

    \begin{figure}[h!]
    \centering
        \includegraphics[width=.4\textwidth]{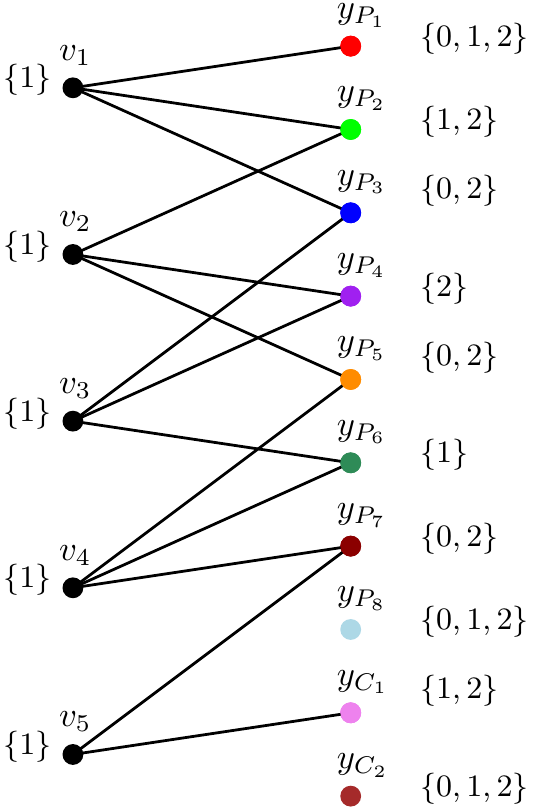}
        \caption{An example of the graph $H$ obtained from the graph $G$ depicted in Figure \ref{corn1}. The sets $M_v$ are indicated next to the corresponding vertices.}\label{corn2}
\end{figure}

   We can now define the instance of SGGFC associated to $G$.
    \begin{definition}
    Given the instance $G$ of $(2,1)$-BLFD, we define an instance $(H,\{M_v:v \in V(H)\})$ of SGGFC in the following manner. The graph $H$ is a bipartite graph with bipartition $(A,B)$ such that $A = X_{3}$ and $B = \{y_{P} \, | \, P \in \mathcal{P}\}$. We define $E(H)$ so that:
    \begin{itemize}
    \item{For every $P \in \mathcal{P}$ and every endvertex $u$ of $P$ that is in $X_3$, we let $E(H)$ contain an edge linking $u$ and $y_P$.}
    \end{itemize}
    We define $\{M_v:v \in V(H)\}$ in the following manner:
    \begin{itemize}
    \item{For every $v \in X_3$, we set $M_v=\{1\}$.}
    \item{For every $P \in \mathcal{P}$ of length 1 all of whose endvertices are in $X_3$, we set $M_{y_P}=\{2\}$.}
    \item{For every  $P \in \mathcal{P}$ of length 2 all of whose endvertices are in $X_3$, we set $M_{y_P}=\{1\}$.}
    \item{For every $P \in \mathcal{P}$ of length 3 all of whose endvertices are in $X_3$, we set $M_{y_P}=\{0,2\}$.}
    \item{For every $P \in \mathcal{P}$ of length 4 all of whose endvertices are in $X_3$, we set $M_{y_P}=\{1,2\}$.}
    \item{For all remaining $P \in \mathcal{P}$, we set $M_{y_P}=\{0,1,2\}$.}
    \end{itemize}
    \end{definition}
     
    For an illustration of the definition of $(H,\{M_v:v \in V(H)\})$, see Figure \ref{corn2}.

\subsection{Proof of reduction}
 In this subsection, we  show that $(H,\{M_v:v \in V(H)\})$ is a yes instance of SGGFC if and only if $G$ is a yes instance of $(2,1)$-BLFD.

\begin{Claim}
If $(H,\{M_v:v \in V(H)\})$ is a yes instance of SGGFC, then $G$ is a yes instance of $(2,1)$-BLFD.
\end{Claim}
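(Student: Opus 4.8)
The plan is to turn a feasible set $S \subseteq E(H)$ into an explicit decomposition $E(G) = E(F_2) \cup E(F_1)$, where $F_2$ is the $2$-bounded linear forest and $F_1$ the matching. The starting observation is that, as in the degree-bound observation, every vertex $v \in X_3$ must see exactly one matching edge and exactly two edges of $F_2$ in any $(2,1)$-decomposition, and these two $F_2$-edges must form a single path of length two centred at $v$. I would therefore read $S$ as a prescription of \emph{which} edge at each degree-$3$ vertex is the matching edge: since $M_v = \{1\}$, there is a unique $P \in \mathcal{P}$ with $vy_P \in S$, and I declare the edge of $P$ incident to $v$ to lie in $F_1$, while the edges of the other two paths at $v$ are declared to lie in $F_2$. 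With this reading, $d_S(y_P)$ is exactly the number of $X_3$-endvertices of $P$ at which the incident edge of $P$ has been forced into the matching.

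First I would record the propagation rule that these forced choices impose inside a single $P$. If an $X_3$-endvertex $u$ of $P$ is \emph{not} selected (so $uy_P \notin S$), then the end-edge of $P$ at $u$ lies in $F_2$ and, because the two $F_2$-edges at $u$ already form a length-two component, this end-edge must be a leaf of $F_2$; hence the \emph{second} edge of $P$ is forced into $F_1$. If instead $u$ is selected, the end-edge of $P$ at $u$ is in $F_1$, so the second edge is then forbidden from $F_1$. Endvertices of degree $1$ impose no such constraint, since an end-edge there is automatically a leaf of $F_2$.

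Next I would check, case by case on the length of $P$ and on $d_S(y_P)$, that the partial colouring forced at the two ends of $P$ extends to a full colouring of $E(P)$ in which $F_1 \cap E(P)$ contains no two consecutive edges and $F_2 \cap E(P)$ contains no three consecutive edges (the latter guaranteeing, together with the leaf condition above, that no $F_2$-component grows past two edges). The short paths with both ends in $X_3$ are exactly where the propagation rule collides: for length $3$ a single selected end forces two adjacent matching edges, which is why only $d_S(y_P)\in\{0,2\}$ survives, and for length $4$ a pair of unselected ends forces two adjacent matching edges at the central vertex, leaving $\{1,2\}$; the cases of length $1$ and $2$ are analogous. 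For every remaining $P$ --- length at least five with both ends in $X_3$, any path with a degree-$1$ end, or a cycle with at most one $X_3$-vertex --- I would exhibit a valid extension for each value $d_S(y_P) \in \{0,1,2\}$, using the extra room (a free low-degree endpoint, or simply enough interior edges to insert a $2,2,1$-type pattern) to absorb either boundary demand; this is precisely what justifies $M_{y_P} = \{0,1,2\}$ in those cases.

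Finally I would assemble the per-$P$ colourings into global sets $F_1$ and $F_2$ and verify the two defining properties. The set $F_1$ is a matching: at a vertex of $X_3$ exactly one incident edge was declared to $F_1$ by construction, and at vertices of degree at most two no two consecutive edges of any $P$ were coloured $F_1$. The set $F_2$ is a $2$-bounded linear forest: every vertex has $F_2$-degree at most two, the only components meeting an $X_3$-vertex are the forced central paths of length two whose end-edges are $F_2$-leaves, and all other components live inside a single $P$ with length at most two by the no-three-consecutive condition. I expect the main obstacle to be the bookkeeping in the case analysis of the previous paragraph, and in particular making the leaf/propagation rule interact correctly across the degree-$3$ vertices so that the forced length-two $F_2$-paths never silently merge and no two matching edges ever become adjacent at a junction; this interaction is exactly what pins down the small gap sets $M_{y_P}$.
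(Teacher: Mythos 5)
Your proposal is correct and follows essentially the same route as the paper: both construct the decomposition path-by-path over $\mathcal{P}$, guided by the invariant that the end edge of $P$ at an $X_3$-endvertex $u$ lies in $F_1$ exactly when $uy_P\in S$, and both verify the matching and $2$-boundedness conditions by checking that each $P$ admits a colouring with no two consecutive $F_1$-edges, no overlong $F_2$-runs, and the forced second-edge behaviour at unselected $X_3$-ends. The only difference is presentational: the paper writes out the explicit edge patterns for each length/selection case, whereas you state the propagation rule and assert the extensions exist, which is a correct (if deferred) version of the same case analysis.
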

 \begin{proof} 
By the assumption, there is a set $S \subseteq E(H)$ with $d_S(v)\in M_v$ for all $v \in V(H)$. We now create a $(2,1)$-bounded linear forest decomposition $(F_2,F_1)$ of $G$, where $F_{1}$ is the matching, and $F_{2}$ is a linear forest where each component has at most two edges. As $\{E(P):P \in \mathcal{P}\}$ is a partition of $E(G)$, it suffices to describe how to split $E(P)$ into edges of $E(F_1)$ and $E(F_2)$ for all $P \in \mathcal{P}$. We do this now.

    Let $P\in \mathcal{P}$ with endvertices $u$ and $v$ and let $u=z_0,z_1,\ldots,v=z_t$ be the vertices in $V(P)$ in the order they appear in $P$, where $u=v$ if $P$ is a cycle. We split into cases based on the endvertices of $P$. \\
\textbf{Case 1: No endvertex of $P$ is contained in $X_{3}$:}\\
     We add the set $\{z_iz_{i+1}: 0\leq i \leq t-1,i \text{ odd}\}$ to $E(F_{1})$ and we add $E(P) - E(F_{1})$ to $E(F_{2})$. For an illustration, see Figure \ref{corn3}$(a)$. \\
    \textbf{Case 2: $P$ contains two endvertices $u,v$, where $d_{G}(u) =3$ and $d_{G}(v) = 1$}\\
     If $uy_P \in S$, we add the set $\{z_iz_{i+1}: 0\leq i \leq t-1,i \text{ even}\}$ to $E(F_{1})$ and we add $E(P) -E(F_{1})$ to $E(F_{2})$. For an illustration, see Figure \ref{corn3}$(b)$. \\
     If $uy_P \notin S$, we let $E(F_1)$ contain $\{z_iz_{i+1}: 0\leq i \leq t-1,i \text{ odd}\}$ and we let $E(F_2)$ contain $E(P)-E(F_1)$. For an illustration, see Figure \ref{corn3}$(c)$. \\
\textbf{Case 3: The endvertices of $P$ are both in $X_{3}$, and $uv \in E(G)$.} \\
 In this case, we add $uv$ to $E(F_{1})$. For an illustration, see Figure \ref{corn3}$(d)$.\\
 \textbf{Case 4: The endvertices of $P$ are both in $X_{3}$, and $t =2$.} \\
In this case, it follows that exactly one of $uy_P$ and $vy_P$ is contained in $S$, and without loss of generality suppose $uy_{P}$ is. We add $uz_{1}$ to $E(F_1)$ we add $z_{1}v$ to $E(F_2)$. For an illustration, see Figure \ref{corn3}$(e)$. \\
\textbf{Case 5: The endvertices of $P$ are both in $X_{3}$, and $t=3$}\\
In this case, $M_{y_P}=\{0,2\}$, and we obtain that either both or none of $uy_P$ and $vy_P$ are contained in $S$. We treat these cases separately. 

If both $uy_{P}$ and $vy_{P}$ are contained in $S$, observe that $P$ is a path as otherwise $u =v$. We add $uz_{1}$ and $vz_{2}$ to $E(F_{1})$ and we add $z_{1}z_{2}$ to $E(F_{2})$. For an illustration, see Figure \ref{corn3}$(f)$.

If none of $uy_P$ and $vy_P$ are contained in $S$, we add $z_{1}z_{2}$ to $E(F_{1})$ and we add $uz_{1}$ and $vz_{2}$ to $E(F_{2})$ . For an illustration, see Figure \ref{corn3}$(g)$.\\
\textbf{Case 6: The endvertices of $P$ are both in $X_{3}$ and $t=4$}\\
In this case, we have that $M_{y_P}=\{1,2\}$ and we obtain that at least one of $uy_P$ and $vy_P$ is contained in $S$. 

If both $uy_P$ and $vy_P$ are contained in $S$, observe that $P$ is a path. We add $uz_{1}$ and $z_{3}v$ to $E(F_1)$ and we add $z_{1}z_{2}$ and $z_{2}z_{3}$ to $E(F_2)$. For an illustration, see Figure \ref{corn3}$(h)$.

If exactly one of $uy_P$ and $vy_P$, say $uy_P$, is contained in $S$, we  add $uz_{1}$ and $z_{2}z_{3}$ to $E(F_1)$ and we add $z_{1}z_{2}$ and $z_{3}v$ to $E(F_2)$. For an illustration, see Figure \ref{corn3}$(i)$.\\
\textbf{Case 7: The endvertices of $P$ are both in $X_{3}$ and $t\geq 5$.} \\
There are three subcases to consider, depending on if the edges $uy_{P}$ and $vy_{P}$ are in $S$.

First suppose that $uy_P,vy_P \in S$, and as before, observe that $P$ is a path. If $t$ is odd, we add the set $\{z_iz_{i+1}: 0\leq i \leq t-1,i \text{ even }\}$  to $E(F_{1})$, and we add $E(P)-E(F_{1})$ to $E(F_{2})$. For an illustration, see Figure \ref{corn3}$(j)$. If $t$ is even, we add  $\{z_iz_{i+1}: 3\leq i \leq t-1,i \text{ odd }\}\cup uz_1$ to $E(F_{1})$ and we add $E(P) - E(F_{2})$ to $E(F_{2})$. For an illustration, see Figure \ref{corn3}$(k)$.

Second, suppose that exactly one of $uy_P$ and $vy_P$, is contained in $S$, without loss of generality let it be $uy_P$. If $t$ is odd, we add  $\{z_iz_{i+1}: 3\leq i \leq t-1,i \text{ odd }\}\cup vz_1$ to $E(F_1)$ and we add $E(P)-E(F_{1})$ to $E(F_2)$. For an illustration, see Figure \ref{corn3}$(\ell)$. If $t$ is even, we add $\{z_iz_{i+1}: 0\leq i \leq t-1,i \text{ even }\}$ to the set $E(F_1)$ and we add $E(P)-E(F_{1})$ to  $E(F_2)$. For an illustration, see Figure \ref{corn3}$(m)$.

\begin{figure}[h!]
    \centering
        \includegraphics[width=.9\textwidth]{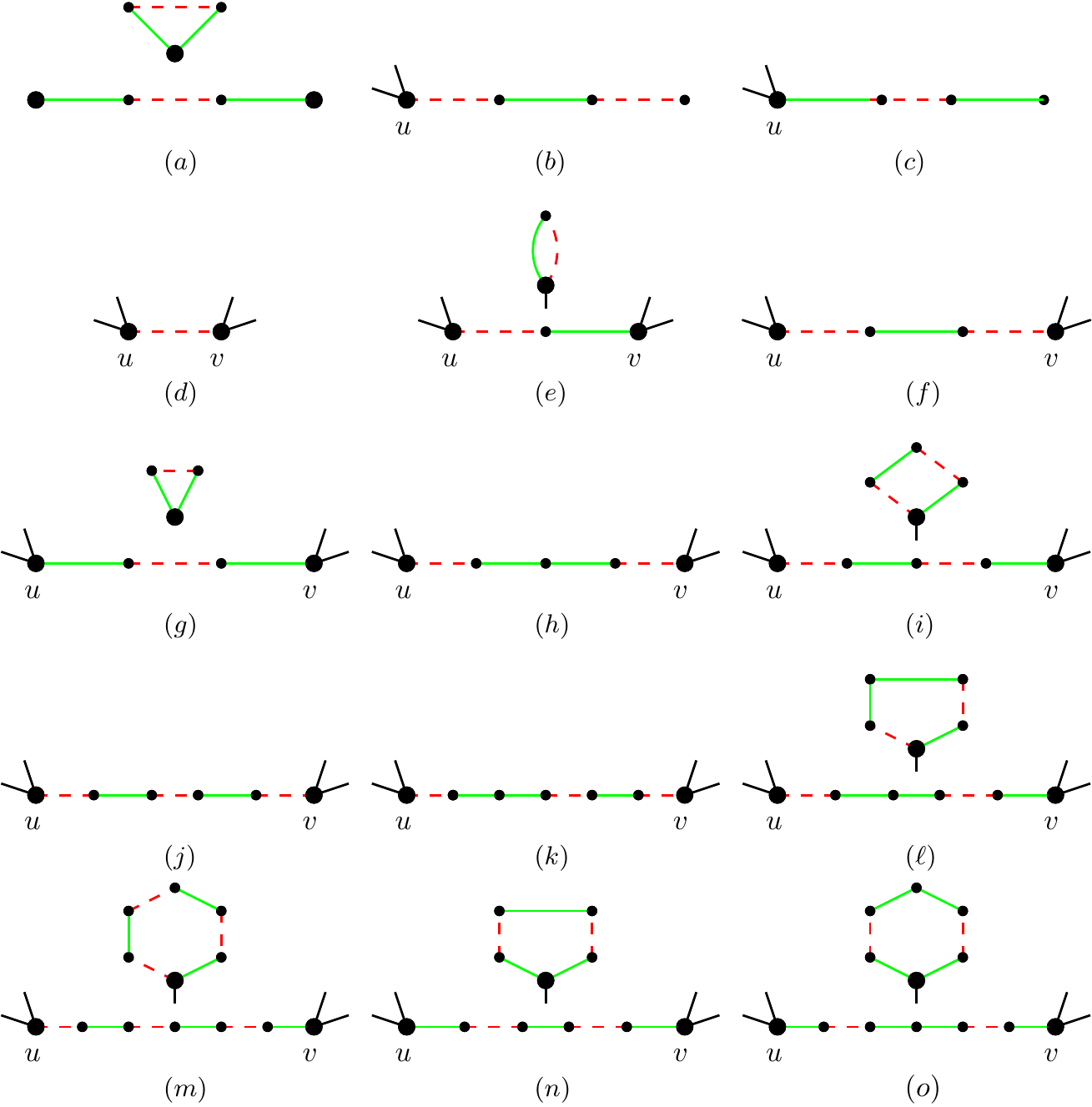}
        \caption{An illustration of how the forests $F_1$ and $F_2$ are created in different cases. The dashed red edges are in $E(F_1)$ and the solid green edges are in $E(F_2)$.}\label{corn3}
\end{figure}
Finally, suppose that none of $uy_P$ and $vy_P$ are contained in $S$. If $t$ is odd, we add $\{z_iz_{i+1}: 0\leq i \leq t-1,i \text{ odd}\}$ to $E(F_1)$ and we add $E(P)-E(F_{1})$ to $E(F_2)$. For an illustration, see Figure \ref{corn3}$(n)$. If $t$ is even, we add $\{z_iz_{i+1}: 4\leq i \leq t-1,i \text{ even}\}\cup z_1z_2$ to $E(F_1)$ and we add $E(P)-E(F_1)$ to $E(F_{2})$. For an illustration, see Figure \ref{corn3}$(o)$. \\

With this, we have a description of $(F_2,F_1)$. Observe that for every path $P \in \mathcal{P}$ and every endvertex $u$ of $P$, we have that the edge of $P$ incident to $u$ is contained in $E(F_1)$ if and only if $uy_P \in S$. Also, for every cycle $C \in \mathcal{P}$ with endvertex $u$, we have that the number of edges $C$ incident to $u$ is exactly the number of edges linking $u$ and $y_P$ that are in $S$.
As $M_u=\{1\}$ for all $u \in X_3$, this yields that every $u \in X_3$ is incident to exactly one edge in $F_1$. It further follows by construction that every interior vertex of $P$ is incident to at most one edge in $E(F_1)$ for all $P \in \mathcal{P}$. Hence $F_1$ is a matching. Now consider a connected component $Q$ of $F_2$. By construction, if $E(Q)\subseteq E(P)$ for some $P \in \mathcal{P}$, then $Q$ is a path of length at most 2. Otherwise, there is some $v \in X_3$ and $P_1,P_2 \in \mathcal{P}$ such that the edge of $E(P_i)$ incident to $v$ is in $E(Q)$. As $d_{F_1(v)}=1$, we have $d_{F_2(v)}=2$. Further, it follows by construction that the length of $P_i$ is at least 2 and that $E(P_i)\cap E(Q)$ contains only a single edge. Hence $F_2$ is a 2-bounded linear forest, so $(F_2,F_1)$ is a $(2,1)$ bounded linear forest decomposition of $G$. Hence $G$ is a yes instance of $(2,1)$-BLFD.
\end{proof}

\begin{Claim}
If $G$ is a yes instance of $(2,1)$-BLFD, then $(H,\{M_v:v \in V(H)\})$ is a yes-instance of SGGFC.
\end{Claim}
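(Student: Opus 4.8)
The plan is to reverse the correspondence exhibited in the previous claim. Given a $(2,1)$-BLFD decomposition $(F_2,F_1)$ of $G$, where $F_1$ is the matching, I would define $S \subseteq E(H)$ by the rule: for every $P \in \mathcal{P}$ and every endvertex $u \in X_3$ of $P$, put the edge $uy_P$ into $S$ precisely when the edge of $P$ incident to $u$ lies in $F_1$ (for a cycle $C$ with endvertex $u$, each of the two $H$-edges between $u$ and $y_C$ records one of the two $C$-edges at $u$). It then remains to verify $d_S(v)\in M_v$ for every $v\in V(H)$, and the whole argument rests on a single structural fact that I would isolate first: since $F_1$ is a matching and $F_2$ is $2$-bounded, no vertex of $G$ is incident to two edges of $F_1$, and no component of $F_2$ contains three consecutive edges.

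First I would dispatch the vertices in $A=X_3$. Each such $u$ has $d_G(u)=3$, and from $d_{F_1}(u)\leq 1$ and $d_{F_2}(u)\leq 2$ we are forced to have $d_{F_1}(u)=1$ and $d_{F_2}(u)=2$. Thus exactly one of the three edges at $u$ lies in $F_1$, so by the definition of $S$ we get $d_S(u)=1\in M_u$, as required.

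The substance of the proof is checking $d_S(y_P)\in M_{y_P}$ for the finitely many special types of $P$, namely paths with both endvertices in $X_3$ of length $1$ through $4$. For every other element $P$ we have $M_{y_P}=\{0,1,2\}$, and the bound $d_S(y_P)\leq 2$ is automatic: a path contributes at most two $H$-edges at $y_P$, and a cycle contributes two parallel $H$-edges at its single $X_3$-endvertex, of which at most one can be in the matching. For the special types I would argue by contradiction, propagating along $P$ the forced alternation of $F_1$- and $F_2$-edges and using $d_{F_2}(u)=2$ at each $X_3$-endvertex. For the length-$1$ path $uv$, the edge $uv$ cannot lie in $F_2$, since then $u$ and $v$ would each contribute a second $F_2$-edge and create an $F_2$-path with three edges; hence $uv\in F_1$ and $d_S(y_P)=2$. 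The length-$2$, $3$ and $4$ cases are handled the same way, tracking the interior degree-$2$ vertices: any forbidden distribution of $F_1$-edges at the two ends forces an $F_2$-component of length $3$. This is exactly what produces the prescribed sets, the parity of the path length being decisive: a length-$3$ path must have both ends matched or neither (giving $M_{y_P}=\{0,2\}$), whereas a length-$4$ path forbids neither end being matched (giving $M_{y_P}=\{1,2\}$).

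I expect the main obstacle to be the bookkeeping in these length $1$--$4$ cases: one must carefully follow the matching/linear-forest alternation through the interior vertices and invoke the ``length-$3$'' obstruction at the correct position to rule out each bad configuration. Once this is done, combining the three points yields $d_S(v)\in M_v$ for all $v\in V(H)$, so $S$ witnesses that $(H,\{M_v:v\in V(H)\})$ is a yes-instance of SGGFC.
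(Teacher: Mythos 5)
Your proposal is correct and follows essentially the same route as the paper: the same definition of $S$ (putting $uy_P$ in $S$ exactly when the $P$-edge at $u$ lies in the matching, with the two parallel $H$-edges handling cycles), the same observation that $d_{F_1}(u)=1$ and $d_{F_2}(u)=2$ force $d_S(u)=1$ for $u\in X_3$, and the same case analysis for paths of length $1$ through $4$ with both ends in $X_3$, each bad configuration being excluded because it would create an $F_2$-component with three edges or violate the matching condition at an interior vertex. The paper simply writes out the length-$2$, $3$, $4$ cases explicitly where you sketch them, but the argument is identical.
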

\begin{proof}
Let $(F_{2},F_{1})$ be a $(2,1)$-bounded linear forest decomposition of $G$. We now define a set $S \subseteq E(H)$. For every path $P \in \mathcal{P}$ and every endvertex $u$ of $P$, we let $S$ contain the edge $uy_P$ if the edge in $E(P)$ that is incident to $u$ is contained in $E(F_1)$. For every cycle $C \in \mathcal{P}$ whose endvertex is $u$, we let $S$ contain exactly as many of the edges linking $u$ and $y_P$ as $E(F_1)$ contains edges of $E(C)$ incident to $u$.

It remains to show that $d_S(u)\in M_u$ for all $u \in V(H)$. First suppose that $u \in X_{3}$. As $(F_2,F_1)$ is a $(2,1)$-bounded linear forest decomposition of $G$, we obtain that $u$ is incident to exactly one edge of $E(F_1)$. This yields $d_S(u)=1$.

Now let $P\in \mathcal{P}$ and let $\{u=z_0,z_1,\ldots,v=z_t\}$ be the vertices in $V(P)$ in the order they appear in $P$ where $u=v$ if $P$ is a cycle. We split into cases.


 \textbf{Case 1: The length of $P$ is $1$ and all endvertices of $P$ are contained in $X_{3}$}\\
  As $(F_2,F_1)$ is a $(2,1)$-bounded linear forest decomposition of $G$, we obtain $d_{F_2}(u)=d_{F_2}(v)=2$, so both $u$ and $v$ are incident to an edge of $E(F_2)-E(P)$. Hence, if $uv \in E(F_2)$, then $F_2$ contains a path of length 3 or a cycle, a contradiction. We hence obtain $uv \in E(F_1)$, so both $uy_P$ and $vy_P$ are contained in $S$. This yields $d_S(y_P)=2\in M_{y_P}$.

\textbf{Case 2: The length of $P$ is 2 and all endvertices of $P$ are contained in $X_3$.}\\
As $F_1$ is a matching at least one of $uz_1$ and $z_1v$ is contained in $E(F_2)$. As $d_{F_2}(u)=2$, we have that $u$ is incident to an edge of $E(F_2)-E(P)$. Hence, if $uz_1,z_1v \in E(F_2)$, then $F_2$ contains a path of length 3 or a cycle, a contradiction. We hence obtain that exactly one of $uz_1$ and $z_1v$ is contained in $E(F_1)$. This yields $d_S(y_P)=1\in M_{y_P}$.

\textbf{Case 3: The length of $P$ is 3 and all endvertices of $P$ are contained in $X_3$. }\\ Suppose for the sake of a contradiction that exactly one of $uz_1$ and $z_2v$, say $uz_1$, is contained in $E(F_1)$. As $F_1$ is a matching, we obtain that $z_1z_2$ is contained in $E(F_2)$. As $d_{F_2}(v)=2$, we have that $v$ is incident to an edge of $E(F_2)-E(P)$. Hence $F_2$ contains a path of length 3, a contradiction. We hence obtain that either both or none of $uz_1$ and $z_2v$ are contained in $E(F_1)$. This yields $d_S(y_P)\in \{0,2\}= M_{y_P}$.

\textbf{Case 4: The length of $P$ is 4 and all endvertices of $P$ are contained in $X_3$.}\\ Suppose for the sake of a contradiction that both of $uz_1$ and $z_2v$ are contained in $E(F_2)$. As $d_{F_2}(u)=2$, we have that $u$ is incident to an edge of $E(F_2)-E(P)$. Hence, as $F_2$ does not contain a path of length 3, we obtain $z_1z_2 \in E(F_1)$. We similarly obtain $z_2z_3 \in E(F_1)$. This contradicts $F_1$ being a matching. We hence obtain that at least one of $uz_1$ and $z_2v$ is contained in $E(F_1)$. This yields $d_S(y_P)\in \{1,2\}= M_{y_P}$.

\textbf{Case 5: All other cases} \\
For all $P \in \mathcal{P}$ which are of none of the forms considered above, we trivially have $d_S(y_P)\in M_{y_P}$.\\

We hence have $d_S(v)\in M_{v}$ for all $v \in V(H)$, so $(H,\{M_v:v \in V(H)\})$ is a yes-instance of SGGFC.
\end{proof}
Theorem \ref{21poly} now follows by applying Theorem \ref{Corn} to $(H,\{M_v:v \in V(H)\})$, which can be constructed in polynomial time given $G$.

\section{Hardness results}\label{neg}
This section is concerned with proving Theorem \ref{mainhard}. In Section \ref{prelhard}, we give the problems we reduce from. In Section \ref{simgad}, we give some simple gadgets that will prove useful in the main reductions later. After, in Sections \ref{k1hard}, \ref{kinfhard} and \ref{klhard}, we prove the NP-completeness of $(k,\ell)$-BLFD for several sets of values for $k$ and $\ell$. Together with Theorem \ref{inftyinfty}, these results yield Theorem \ref{mainhard}. 

\subsection{Preliminaries}\label{prelhard}

We here describe the two variants of the satisfiablility problem we need for our reductions.
\medskip

\noindent \textbf{$(3,B2)$-SAT :}
\smallskip

\noindent\textbf{Input:} A set of variables $X$, a set of clauses $\mathcal{C}$ such that $|C|=3$ for all $C \in \mathcal{C}$ and for every $x \in X$, both $x$ and $\bar{x}$ are contained in exactly two clauses.
\smallskip

\noindent\textbf{Question:} Is there a truth assignment $\phi: X \rightarrow \{TRUE,FALSE\}$ such that every clause contains at least one true literal?
\medskip

\noindent \textbf{Monotonous not all equal 3SAT (MNAE3SAT):}
\smallskip

\noindent\textbf{Input:} A set of variables $X$, a set of clauses $\mathcal{C}$ such that every $C \in \mathcal{C}$ contains exactly 3 positive literals.
\smallskip

\noindent\textbf{Question:} Is there a truth assignment $\phi: X \rightarrow \{TRUE,FALSE\}$ such that every clause contains at least one true and at least one false literal?
\medskip

For both these problems, we call an assignment $\phi$ with the desired properties {\it satisfying}. Both of these problems are known to be NP-complete. 

\begin{Theorem}[\cite{Berman2003ApproximationHO}]
\label{scott}
$(3,B2)$-SAT is NP-complete.
\end{Theorem}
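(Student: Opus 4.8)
The plan is to verify membership in NP and then to give a polynomial-time reduction from ordinary $3$-SAT, which is NP-complete by the Cook--Levin theorem. Membership is immediate: a truth assignment $\phi$ serves as a certificate, and checking that every clause contains at least one true literal takes linear time, so the only real content lies in the hardness.

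For the reduction I would start from a $3$-SAT instance $(X,\mathcal{C})$ in which every clause has exactly three literals, and transform it into an equivalent instance in which each of $x$ and $\bar{x}$ occurs in exactly two clauses. The main device is a consistency gadget. If a variable $x$ occurs $m$ times across $\mathcal{C}$, I would introduce fresh copies $x_1,\ldots,x_m$, replace the $i$-th occurrence of a literal on $x$ by the corresponding literal on $x_i$, and then force all copies to agree by adding the implication cycle $x_1\Rightarrow x_2\Rightarrow\cdots\Rightarrow x_m\Rightarrow x_1$, where each implication $x_i\Rightarrow x_{i+1}$ is encoded by the clause $(\bar{x_i}\vee x_{i+1})$. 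Any satisfying assignment of such a cycle must give all copies the same value, so the transformed instance is satisfiable exactly when the original one is, and now each copy variable is used only a bounded number of times.

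Once occurrences are bounded, the remaining task is to massage the instance so that the counts and the clause sizes become exactly right: every clause must have size exactly three, and each literal must occur exactly twice rather than at most twice. To reach clause size three I would pad the two-literal implication clauses using auxiliary variables whose value can always be set to satisfy the padding, and to reach exactly two occurrences of every literal I would append a fixed number of balancing clauses built from dummy variables that are trivially satisfiable and can absorb any surplus or deficit in the occurrence counts. Each step must preserve satisfiability in both directions and be performable in polynomial time, which is routine to check from the structure of the gadgets.

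The main obstacle I anticipate is the exact bookkeeping required to hit the counts precisely. Bounding occurrences from above via the consistency cycle is easy, but forcing each of $x$ and $\bar{x}$ to appear in \emph{exactly} two clauses while keeping every clause of size exactly three is delicate, because the implication cycle naturally produces an imbalance: within the cycle a copy appears once positively and once negatively, and the single original occurrence then adds one more to one polarity. Resolving this requires carefully choosing how many copies to introduce and how to distribute dummy literals so that every literal lands on exactly two occurrences, together with verifying that the padding and balancing never create a spurious obstruction to satisfiability. I would also have to handle separately the input variables that already occur very few times, ensuring that these too are brought up to the exact balanced count.
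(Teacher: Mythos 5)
The paper does not prove this statement at all: Theorem~\ref{scott} is quoted from Berman, Karpinski and Scott \cite{Berman2003ApproximationHO} and used as a black box for the reductions in Section~\ref{k1hard}, so there is no in-paper argument to compare yours against. Judged on its own, your sketch follows the standard template (NP membership, consistency cycle to bound occurrences, then padding and balancing), which is indeed the right general shape.

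However, there is a genuine gap in the padding step. You propose to enlarge each two-literal implication clause $(\bar{x_i}\vee x_{i+1})$ to a three-literal clause by adding ``auxiliary variables whose value can always be set to satisfy the padding.'' If the padding literal can be made true, that clause is satisfied by the padding alone and no longer enforces $x_i\Rightarrow x_{i+1}$; the cycle then fails to force the copies of $x$ to agree, and the backward direction of the reduction (satisfiability of the transformed instance implies satisfiability of the original) collapses, since a satisfying assignment may set different copies of the same original variable to different values. What you actually need is a padding literal that is \emph{forced to be false}, and forcing a literal's value using only clauses of size exactly three in which every literal occurs exactly twice is itself a nontrivial gadget-design problem --- naive forcing gadgets blow up the occurrence counts. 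You correctly anticipate that the occurrence bookkeeping is delicate (each cycle copy $x_i$ ends up with three occurrences split $2{+}1$ between polarities and needs exactly one more of the minority polarity), but you misdiagnose the direction of the danger: the risk is not that the padding creates a spurious \emph{obstruction} to satisfiability, but that it creates spurious \emph{satisfying assignments}. Until the padding and balancing gadgets are written down explicitly and shown to preserve both the forcing and the exact $(2,2)$ occurrence counts, the reduction is not complete; this is precisely the technical content that \cite{Berman2003ApproximationHO} supplies.
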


\begin{Theorem}[\cite{MNAE3SAT}]
\label{mono}
MNAE3SAT is NP-complete.
\end{Theorem}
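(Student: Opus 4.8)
The plan is to exhibit a polynomial-time reduction from an already-hard satisfiability problem to MNAE3SAT, since membership in NP is immediate (given $\phi$, checking that every clause contains both a true and a false literal is linear). I would route the reduction through the non-monotone variant: first show that \emph{not-all-equal 3SAT} (NAE3SAT), where each clause is a triple of literals required not to be all-true and not all-false, is NP-complete by reduction from ordinary 3SAT, and then remove the negations to reach the monotone problem. A conceptually cleaner but less self-contained alternative is to invoke Schaefer's dichotomy theorem: the ternary relation $R = \{0,1\}^3 \setminus \{000,111\}$ underlying an all-positive NAE clause is simultaneously not $0$-valid ($000 \notin R$), not $1$-valid ($111 \notin R$), not Horn, not dual-Horn, not affine, and not bijunctive, so $\mathrm{SAT}(R)$ --- which is exactly MNAE3SAT --- is NP-complete. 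I would keep this as a fallback and present the explicit reduction as the primary argument.

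For the step from 3SAT to NAE3SAT I would use the standard global-variable trick together with the defining symmetry of NAE constraints: an assignment satisfies an NAE instance if and only if its pointwise complement does. Given a 3SAT formula, introduce one fresh global variable $w$ and replace each clause $(\ell_1 \vee \ell_2 \vee \ell_3)$ by the four-literal NAE clause $\mathrm{NAE}(\ell_1,\ell_2,\ell_3,w)$. If the formula is satisfiable, setting $w$ to FALSE makes each such clause have a true literal among the $\ell_i$ and the false literal $w$; conversely, from any NAE-satisfying assignment I may, using the complementation symmetry, assume $w$ is FALSE, whence each clause must contain a true $\ell_i$, recovering a 3SAT solution. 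Finally I would restore width $3$ by the usual splitting of a $4$-clause $\mathrm{NAE}(a,b,c,d)$ into $\mathrm{NAE}(a,b,z) \wedge \mathrm{NAE}(\overline{z},c,d)$ with a fresh variable $z$, checking the few relevant truth patterns to confirm equisatisfiability.

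The remaining, and I expect hardest, step is to eliminate negations to obtain a \emph{monotone} instance. \textbf{The obstacle} is that a single monotone NAE clause cannot force two chosen variables to be unequal, so the naive device of replacing each literal $\ell$ by a fresh positive variable fails unless one also installs a ``link'' forcing the positive copy $x^{+}$ and the negative copy $x^{-}$ of each variable to take opposite values. I would implement this link by a small monotone gadget realizing $x^{+} \neq x^{-}$: if clauses are allowed to repeat a variable this is literally the clause $\mathrm{NAE}(x^{+},x^{-},x^{-})$, while in general the relation $\neq$ is primitive-positive definable from the NAE relation (the only Boolean operations preserving NAE being generated by global complementation), so a constant-size gadget of positive NAE clauses --- for instance one built around a non-$2$-colourable $3$-uniform configuration whose forced colours collide exactly when $x^{+}=x^{-}$ --- suffices. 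Alternatively, the per-clause identity $\mathrm{NAE}(a,b,c)=\mathrm{NAE}(\overline{a},\overline{b},\overline{c})$ lets me first normalise every clause to have at most one negated literal, simplifying the bookkeeping before attaching the link gadgets. Once the links are in place, satisfying assignments of the monotone instance correspond exactly to those of the NAE3SAT instance, and since all gadgets have constant size the whole construction is polynomial, completing the reduction.
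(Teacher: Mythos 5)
The paper does not actually prove this statement: it is imported by citation (the reference \cite{MNAE3SAT}), so there is no internal proof to compare against. Your argument must therefore stand on its own, and in substance it does; it follows the standard route (essentially the argument behind the cited result, which can also be recovered from Schaefer's dichotomy, as you note in your fallback). The chain --- NP membership, then 3SAT to NAE-SAT via one global variable $w$ together with the complementation symmetry of NAE constraints, then width reduction via $\mathrm{NAE}(a,b,c,d) \Leftrightarrow \exists z\,[\mathrm{NAE}(a,b,z) \wedge \mathrm{NAE}(\overline{z},c,d)]$ --- is correct, and the case checks you allude to all go through.

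The one place you should be fully explicit is the monotone disequality link, for two reasons. First, the paper's MNAE3SAT clauses are $3$-element sets of positive literals, so repeated variables are not available and $\mathrm{NAE}(x^{+},x^{-},x^{-})$ is not a legal clause; moreover, the appeal to primitive-positive definability is slightly off as stated, since the Inv--Pol Galois connection produces pp-definitions that use equality atoms, and eliminating equality by identifying variables reintroduces exactly the repeated occurrences you must avoid. Second, the explicit gadget you gesture at does exist and is tiny: introduce fresh variables $p,q,r$ and the four clauses $\mathrm{NAE}(x^{+},x^{-},p)$, $\mathrm{NAE}(x^{+},x^{-},q)$, $\mathrm{NAE}(x^{+},x^{-},r)$, $\mathrm{NAE}(p,q,r)$. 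If $x^{+}=x^{-}=v$, each of $p,q,r$ is forced to the value $\overline{v}$, so the last clause is monochromatic and fails; if $x^{+}\neq x^{-}$, the first three clauses impose nothing and one may take $p=0$, $q=1$, $r$ arbitrary. Writing this gadget down closes the only soft spot in your proposal and makes the reduction fully self-contained and compatible with the paper's definition of the problem.
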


\subsection{Simple gadgets}\label{simgad}

In this section, we describe a collection of simple gadgets we need for the main reductions in Sections \ref{k1hard}, \ref{kinfhard} and \ref{klhard}.

A {\it long $1$-forcer} consists of a path of length 4 the first and last edge of which are doubled and a vertex called the \textit{tip vertex} of the long $1$-forcer which is linked to the third vertex of the initial path by an edge.
An illustration can be found in Figure \ref{image_0}. 
\begin{figure}[h!]
    \centering
        \includegraphics[width=.5\textwidth]{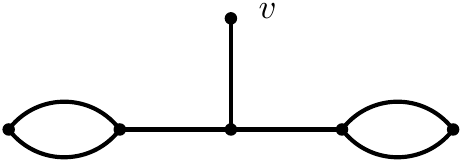}
        \caption{A long $1$-forcer. The vertex marked $v$ is the tip vertex of the short long $1$-forcer.}\label{image_0}
\end{figure}The decisive property of a long 1-forcer is the following:

\begin{Proposition}\label{long1}
Let $G$ be a long $1$-forcer and $k \in \{4,\ldots\}\cup \infty$. Then $G$ has a unique $(k,1)$-bounded linear forest decomposition $(F_k,F_1)$ in which the tip vertex $v$ of $G$ satisfies $d_{F_k}(v)=0$ and $d_{F_1}(v)=1$.
\end{Proposition}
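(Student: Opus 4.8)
The plan is to label the forcer explicitly and then observe that nearly every edge is forced into one of the two forests, with the tip edge determined last. I would write $u_1,\dots,u_5$ for the vertices of the length-$4$ path in the order they appear, so that the two doubled edges are the parallel copies between $u_1,u_2$ and between $u_4,u_5$, the edges $u_2u_3$ and $u_3u_4$ are single, and the tip vertex $v$ is joined to $u_3$ by a single edge. Degrees are $3$ at $u_2,u_3,u_4$ and $1$ at $v$, so there is plenty to pin down.

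The first step is the doubled-edge argument: for each doubled edge, exactly one copy must lie in $F_1$ and the other in $F_k$. Indeed, two parallel edges cannot both lie in the matching $F_1$ since they share both endpoints, and they cannot both lie in $F_k$ since two parallel edges form a cycle, which a forest forbids; as the two copies must be partitioned between the two forests, exactly one goes to each. This immediately places a matching edge at $u_2$ (a copy towards $u_1$) and at $u_4$ (a copy towards $u_5$). Because $F_1$ is a matching, neither $u_2u_3$ nor $u_3u_4$ can then belong to $F_1$, so both lie in $F_k$.

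Next I would invoke the maximum-degree-$2$ constraint on the linear forest at $u_3$: from $u_2u_3,u_3u_4\in F_k$ we get $d_{F_k}(u_3)=2$, so $F_k$ can contain no further edge incident to $u_3$. Hence the tip edge $vu_3$ must be placed in $F_1$, giving $d_{F_1}(v)=1$ and $d_{F_k}(v)=0$, which is exactly the asserted property. Finally I would verify that the forced assignment really is a decomposition: $F_1$ consists of one copy of each doubled edge together with $vu_3$, whose endpoints are pairwise distinct, so it is a matching, while the remaining edges form the single path $u_1u_2u_3u_4u_5$ of length $4$, which is $k$-bounded. Uniqueness then follows because every choice above was forced, the only freedom being the cosmetic relabelling of which parallel copy of a doubled edge is the matching edge.

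I do not anticipate a serious obstacle; the argument is a short forcing chain built on two elementary facts (no two parallel edges in a matching, and none in a forest; plus the degree-$2$ cap for linear forests). The single point worth emphasising is where the hypothesis $k\ge 4$ enters: it is precisely what makes the forced $F_k$-path of length $4$ admissible, so the decomposition exists for exactly these values of $k$ and no case analysis on $k$ is required.
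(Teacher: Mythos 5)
Your proof is correct and follows essentially the same forcing chain as the paper's: split each parallel pair one edge to each forest, use the matching condition to push the two middle path edges into $F_k$, and use the degree-$2$ bound on the linear forest at the third vertex to force the tip edge into $F_1$. Your write-up is simply a more explicit version of the paper's argument, including the same (correct) observation that uniqueness holds up to swapping parallel copies.
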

\begin{proof}
Let $(F_k,F_1)$ be a $(k,1)$-bounded linear forest decomposition of $G$. Clearly, for each pair of parallel edges, one of them is contained in $E(F_k)$ and one of them is contained in $E(F_1)$. Hence, as $F_1$ is a matching, we obtain that the two middle edges of the initial path of $G$ are contained in $E(F_k)$. As $F_k$ is a linear forest, we obtain that $(F_k,F_1)$ has the desired properties. Further, it is easy to see that $(F_k,F_1)$ is indeed a $(k,1)$-bounded linear forest decomposition of $G$. 
\end{proof}
For an illustration, see Figure \ref{image_-1}.
\begin{figure}[h!]
    \centering
        \includegraphics[width=.5\textwidth]{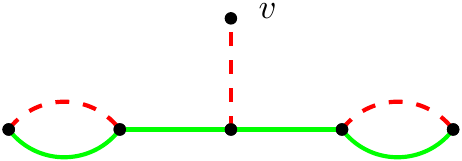}
        \caption{An illustration of the unique $(k,1)$-bounded linear forest decomposition $(F_k,F_1)$ of a long 1-forcer for any $k \in \{4,\ldots\}\cup \infty$. The dashed red edges are in $E(F_{1})$ and the solid green edges are in $E(F_{k})$. The vertex marked $v$ is the tip vertex of the long $1$-forcer.}\label{image_-1}
\end{figure}

A {\it short $1$-forcer} is obtained from a vertex, called the tip vertex of the short $1$-forcer, and linking it to the tip vertex of a long 1-forcer. See Figure \ref{image_-2} for an illustration. 
\begin{figure}[h!]
    \centering
        \includegraphics[width=.5\textwidth]{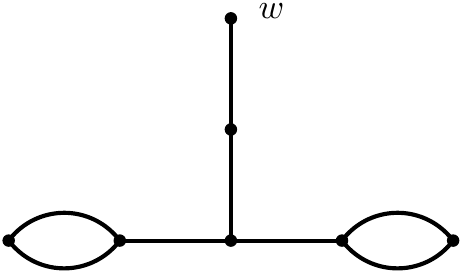}
        \caption{A short $1$-forcer. The vertex marked $w$ is the tip vertex of the short $1$-forcer.}\label{image_-2}
\end{figure} Using Proposition \ref{long1}, we get the decisive property of short $1$-forcers:

\begin{Proposition}\label{short1}
Let $G$ be a short $1$-forcer and $k \in \{4,\ldots\}\cup \infty$. Then $G$ has a unique $(k,1)$-bounded linear forest decomposition $(F_k,F_1)$ in which the tip vertex $v$ of $G$ satisfies $d_{F_1}(v)=0$ and is contained in a unique maximal path of $F_k$ which is of length 1.
\end{Proposition}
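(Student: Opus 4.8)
The plan is to leverage Proposition~\ref{long1} directly, since a short $1$-forcer is built by attaching one new vertex to the tip vertex of a long $1$-forcer. Let $G$ be a short $1$-forcer with tip vertex $v$, let $w$ be the tip vertex of the underlying long $1$-forcer $G'$, and let $vw$ be the unique edge joining them. First I would observe that $G = G' + v + vw$, and that in any $(k,1)$-bounded linear forest decomposition $(F_k,F_1)$ of $G$, restricting to the edges of $G'$ yields a $(k,1)$-bounded linear forest decomposition of $G'$: the restriction of $F_1$ is still a matching, and the restriction of $F_k$ is still a linear forest with components of length at most $k$. Hence Proposition~\ref{long1} applies to the restriction, forcing $d_{F_k}(w)=0$ and $d_{F_1}(w)=1$ in $G'$.

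Next I would analyze the single extra edge $vw$. Since $w$ is already matched in $F_1$ (by $d_{F_1}(w)=1$ from the long $1$-forcer), the edge $vw$ cannot be placed in $F_1$ without violating the matching condition at $w$. Therefore $vw \in E(F_k)$. This immediately gives $d_{F_1}(v)=0$, since $vw$ is the only edge incident to $v$ in $G$. Moreover, within $G'$ the long $1$-forcer decomposition has $d_{F_k}(w)=0$, so $w$ is incident to no edge of $F_k$ other than $vw$; thus in $G$ the component of $F_k$ containing $vw$ consists precisely of the single edge $vw$, making it a maximal path of $F_k$ of length exactly $1$ containing $v$. This establishes the claimed structural property.

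To complete the proof I would address uniqueness and existence. Uniqueness of the decomposition of $G$ follows from the uniqueness of the decomposition of $G'$ guaranteed by Proposition~\ref{long1} together with the forced placement $vw \in E(F_k)$: every edge of $G$ has its membership determined. For existence, I would take the unique decomposition of $G'$ from Proposition~\ref{long1}, add $vw$ to $F_k$, and verify that the result is a valid $(k,1)$-bounded linear forest decomposition of $G$ --- the matching $F_1$ is unchanged and still a matching, and $F_k$ gains only the isolated edge $vw$ (a length-$1$ path), so it remains a $k$-bounded linear forest for every $k \geq 4$, indeed for every $k \geq 1$.

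I do not anticipate a serious obstacle here, since the statement is a short corollary of Proposition~\ref{long1}; the only point requiring a little care is the argument that a decomposition of $G$ restricts to a decomposition of $G'$ so that Proposition~\ref{long1} can be invoked, and the complementary observation that $d_{F_k}(w)=0$ in $G'$ is exactly what guarantees the length-$1$ path in $F_k$ does not extend through $w$ into the long $1$-forcer. Everything else is a direct consequence of $w$ being saturated by $F_1$.
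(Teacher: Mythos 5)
Your proof is correct and follows exactly the route the paper intends: the paper gives no explicit argument for Proposition~\ref{short1} beyond the remark that it follows from Proposition~\ref{long1}, and your write-up supplies precisely the missing details (restricting the decomposition to the underlying long $1$-forcer, forcing $vw \in E(F_k)$ because $w$ is saturated in $F_1$, and noting $d_{F_k}(w)=0$ inside the long $1$-forcer so the component of $F_k$ at $v$ is the single edge $vw$). No gaps.
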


For an illustration, see Figure \ref{image_-3}.
\begin{figure}[h!]
    \centering
        \includegraphics[width=.5\textwidth]{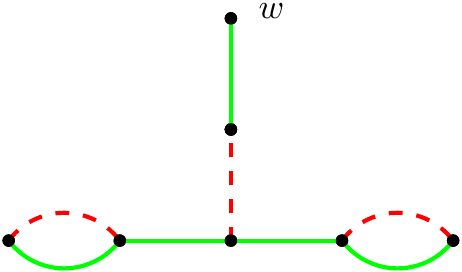}
        \caption{An illustration of the unique $(k,1)$-bounded linear forest decomposition $(F_k,F_1)$ of a short 1-forcer for any $k \in \{4,\ldots\}\cup \infty$. The dashed red edges are in $E(F_{1})$ and the solid green edges are in $E(F_{k})$. The vertex marked $w$ is the tip vertex of the short $1$-forcer.}\label{image_-3}
\end{figure}
\medskip

Let $k,\ell$ be positive integers with $k > \ell \geq 2$. A {\it short $(k,\ell)$-forcer} is obtained from a path $v_0\ldots v_k$ by doubling the edge $v_iv_{i+1}$ for all $i=0,\ldots,k-1$ that do not satisfy $i=k-1-\mu(\ell+1)$ for some integer $\mu \in \{0,\ldots,\lfloor\frac{k-1}{\ell+1}\rfloor\}$. We call $v_k$ the tip vertex of the short $(k,\ell)$-forcer. See Figure \ref{image_1} for an illustration. 
\begin{figure}[h!]
    \centering
        \includegraphics[width=.03\textwidth]{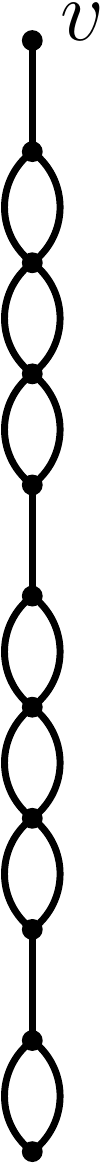}
        \caption{A short $(10,3)$-forcer. The vertex marked $v$ is the tip vertex of the short $(10,3)$-forcer.}\label{image_1}
\end{figure}The decisive property of short $(k,\ell)$-forcers is the following:

\begin{Proposition}\label{short}
Let $G$ be a short $(k,\ell)$-forcer for some positive integers with $k >\ell \geq 2$. Then $G$ has a unique $(k,\ell)$-bounded linear forest decomposition $(F_k,F_\ell)$ in which the tip vertex $v$ of $G$ satisfies $d_{F_\ell}(v)=0$ and is an endpoint of a path of length $k$ in $F_k$.
\end{Proposition}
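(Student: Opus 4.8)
The plan is to show that the doubling pattern forces the entire underlying path $v_0v_1\cdots v_k$ to lie in $F_k$ as a single component of length $k$, while $F_\ell$ receives exactly the spare copies of the doubled edges, cut into pieces of length at most $\ell$ by the non-doubled edges. The whole argument is driven by the interplay between the parallel-edge constraint and the fact that the single (non-doubled) positions are spaced exactly $\ell+1$ apart.

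First I would record the basic parallel-edge observation, exactly as in the proof of Proposition \ref{long1}: for every doubled edge $v_iv_{i+1}$ the two parallel copies cannot both lie in the same linear forest, since they would form a $2$-cycle. Hence exactly one copy is in $F_k$ and one in $F_\ell$, and in particular both $F_k$ and $F_\ell$ contain an edge joining $v_i$ and $v_{i+1}$ at every doubled position.

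The heart of the argument is to prove that every single edge must lie in $F_k$. Writing the single positions as $i=k-1-\mu(\ell+1)$, one checks that each single edge has, on at least one of its two sides, a maximal run of exactly $\ell$ consecutive doubled positions abutting it (an interior single edge is forced by the block of $\ell$ doubled edges immediately to its right, while the boundary cases, including the last edge $v_{k-1}v_k$, are handled by the block to the left). The $F_\ell$-copies of these $\ell$ doubled edges form a path of length exactly $\ell$ in $F_\ell$, one endpoint of which is an endpoint of the single edge. Since this $F_\ell$-component already attains the maximal allowed length $\ell$, the single edge cannot be added to $F_\ell$ without creating a component of length $\ell+1$; hence it is forced into $F_k$. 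This determines the decomposition completely: $F_k$ consists of all single edges together with one copy of each doubled edge, which is precisely the full path $v_0\cdots v_k$ of length $k$, while $F_\ell$ consists of the remaining copies of the doubled edges. As the two copies of each doubled edge are interchangeable, this partition of $E(G)$ is unique.

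It then remains to verify that this forced partition really is a $(k,\ell)$-bounded linear forest decomposition, which simultaneously gives existence and uniqueness. The set $F_k$ is a single path of length $k$, hence $k$-bounded. The single positions cut the doubled positions into blocks: the interior blocks have length exactly $\ell$, and the initial block, consisting of positions $0,\ldots,i_{\min}-1$ with $i_{\min}=(k-1)\bmod(\ell+1)$, has length at most $\ell$, so every component of $F_\ell$ is a path of length at most $\ell$ and $F_\ell$ is $\ell$-bounded. Finally, the tip vertex $v_k$ is incident only to the single edge $v_{k-1}v_k\in F_k$, giving $d_{F_\ell}(v_k)=0$, and $v_k$ is an endpoint of the length-$k$ path in $F_k$, as required. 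The only delicate point is the length count showing that each $F_\ell$-block adjacent to a single edge reaches length exactly $\ell$, thereby forcing all single edges into $F_k$; once this is established everything else is routine bookkeeping.
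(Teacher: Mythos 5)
Your proof is correct and takes essentially the same approach as the paper's: split each parallel pair between $F_k$ and $F_\ell$, use the $\ell$-boundedness of $F_\ell$ together with the spacing of the non-doubled positions to force every single edge into $F_k$, and then verify that the resulting partition works. You merely spell out the block-length count that the paper's terser argument leaves implicit.
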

\begin{proof}
Let $(F_k,F_\ell)$ be a $(k,\ell)$-bounded linear forest decomposition of $G$. Clearly, for each pair of parallel edges, one of them is contained in $E(F_k)$ and one of them is contained in $E(F_\ell)$. Hence, as $F_\ell$ is an $\ell$-bounded linear forest, we obtain that $v_{i}v_{i+1}\in E(F_k)$ for all $i$ that satisfy $i=k-1-\mu(\ell+1)$ for some integer $\mu \in \{0,\ldots,\lceil\frac{k-1}{\ell+1}\rceil\}$. Hence $(F_k,F_\ell)$ has the desired properties. Further, it is easy to see that $(F_k,F_\ell)$ is indeed a $(k,\ell)$-bounded linear forest decomposition of $G$. 
\end{proof}

\begin{figure}[h!]
    \centering
        \includegraphics[width=.03\textwidth]{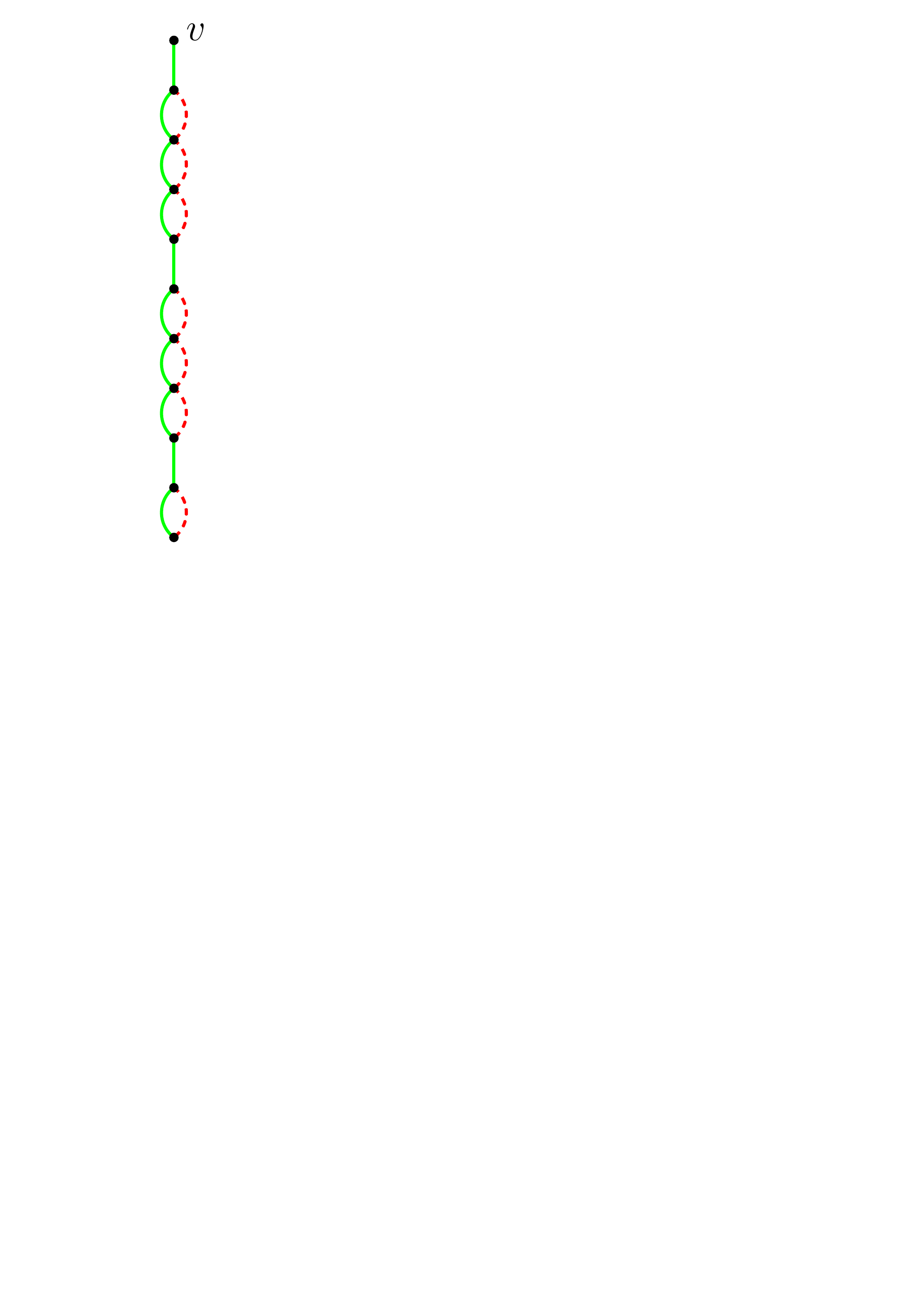}
        \caption{An illustration of the unique $(10,3)$-bounded linear forest decomposition of a short $(10,3)$-forcer. The dashed red edges are in $E(F_{3})$ and the solid green edges are in $E(F_{10})$. The vertex marked $v$ is the tip vertex of the short $(10,3)$-forcer.}\label{image_2}
\end{figure}

For an illustration, see Figure \ref{image_2}. We use short $(k,\ell)$-forcers to obtain a further gadget with a similar role. Let $G_1,G_2$ be two short $(k,\ell)$-forcers whose tip vertices are $v_1$ and $v_2$, respectively. We now obtain a  {\it long $(k,\ell)$-forcer} by adding a new vertex $w$ and the edges $v_1w$ and $v_2w$. We call $w$ the tip vertex of the long $(k,\ell)$-forcer. For an illustration, see Figure \ref{image_3}.
\begin{figure}[h!]
    \centering
        \includegraphics[width=.4\textwidth]{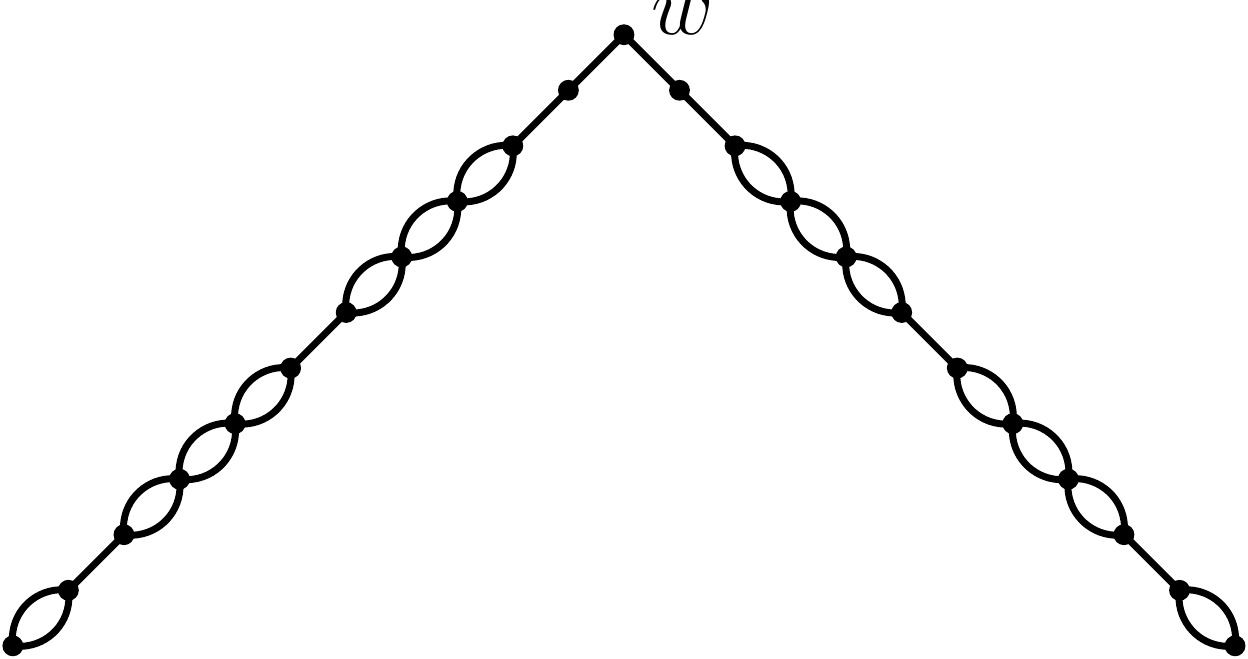}
        \caption{A long $(10,3)$-forcer. The vertex marked $w$ is the tip vertex of the long $(10,3)$-forcer.}\label{image_3}
\end{figure}
The decisive property of long $(k,\ell)$-forcers is the following:

\begin{Proposition}\label{long}
Let $G$ be a long $(k,\ell)$-forcer for some positive integers with $k >\ell \geq 2$. Then $G$ has a unique $(k,\ell)$-bounded linear forest decomposition $(F_k,F_\ell)$ in which the tip vertex $w$ of $G$ satisfies $d_{F_k}(w)=0$ and $d_{F_\ell}(w)=2$.
\end{Proposition}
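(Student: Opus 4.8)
The plan is to reduce the statement about a long $(k,\ell)$-forcer to the already-established behaviour of its two constituent short $(k,\ell)$-forcers, exactly as the long $1$-forcer/short $1$-forcer pair was handled via Proposition \ref{long1}. Let $G$ be a long $(k,\ell)$-forcer, built from two short $(k,\ell)$-forcers $G_1,G_2$ with tip vertices $v_1,v_2$, together with the new vertex $w$ and edges $v_1w,v_2w$. Let $(F_k,F_\ell)$ be any $(k,\ell)$-bounded linear forest decomposition of $G$. First I would observe that the restriction of $(F_k,F_\ell)$ to each $G_i$ is a $(k,\ell)$-bounded linear forest decomposition of $G_i$ (the classes remain linear forests and the bounds can only decrease when edges are removed). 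By Proposition \ref{short}, this forces the restriction to be \emph{the} unique decomposition of $G_i$, in which $d_{F_\ell}(v_i)=0$ and $v_i$ is an endpoint of a path of length exactly $k$ in $F_k$ internal to $G_i$.

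The key step is then to analyse the two extra edges $v_1w$ and $v_2w$ at the tip vertex $w$. Since $d_{F_\ell}(v_i)=0$ inside $G_i$ and $v_i$ is already the endpoint of a length-$k$ path of $F_k$ inside $G_i$, the edge $v_iw$ cannot be added to $F_k$: doing so would either create a path of length $k+1$ in $F_k$ (violating the $k$-bound) or raise $d_{F_k}(v_i)$ to $3$ (violating that $F_k$ is a linear forest, i.e.\ has maximum degree $2$). Hence both $v_1w$ and $v_2w$ must lie in $E(F_\ell)$. This immediately gives $d_{F_\ell}(w)=2$ and $d_{F_k}(w)=0$, which is the claimed property. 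I would also note that placing both $v_1w,v_2w$ in $F_\ell$ is consistent with the $\ell$-bound because $\ell\geq 2$: the two edges form a path of length $2\leq \ell$ through $w$, and since each $v_i$ has $F_\ell$-degree $0$ inside its forcer, this path does not extend further.

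Finally, to establish uniqueness and existence, I would remark that the argument above shows every decomposition of $G$ restricts to the forced decomposition on each $G_i$ and is forced to put both tip edges in $F_\ell$, so the decomposition of $G$ is uniquely determined; conversely, combining the two unique decompositions of $G_1,G_2$ with $v_1w,v_2w\in E(F_\ell)$ yields a genuine $(k,\ell)$-bounded linear forest decomposition, since $F_k$ is a disjoint union of two paths of length $k$ and $F_\ell$ is obtained from two $\ell$-bounded linear forests by adding a single length-$2$ path at $w$, which is still $\ell$-bounded as $\ell\geq 2$. The only step requiring genuine care is the case analysis showing $v_iw\notin E(F_k)$, where one must rule out both the degree violation and the path-length violation; this is the main (though modest) obstacle, and it is precisely where the hypotheses that $F_k$ is a \emph{linear} forest and that $v_i$ already anchors a length-$k$ path are used.
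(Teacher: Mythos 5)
Your proposal is correct and follows essentially the same route as the paper: restrict the decomposition to each short $(k,\ell)$-forcer $G_i$, invoke the uniqueness in Proposition \ref{short} to conclude that $v_i$ already terminates a length-$k$ path of $F_k$ inside $G_i$, deduce that $v_iw$ must go to $F_\ell$, and check that the resulting assignment is a valid decomposition. The paper's proof is just a terser version of the same argument, so no further comment is needed.
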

\begin{proof}
Let $(F_k,F_\ell)$ be a $(k,\ell)$-bounded linear forest decomposition of $G$. By Proposition \ref{short}, for $i=1,2$, the restriction of $(F_k,F_\ell)$ to $G_i$ is exactly as described in Proposition \ref{short}, so $v_i$ is the last vertex of a path of length $k$ in $F_k$ that is entirely contained in $G_i$. As $F_k$ is a $k$-bounded linear forest, we obtain that $\{v_1w,v_2w\}\subseteq E(F_\ell)$. Hence the statement follows. Further, it is easy to see that $(F_k,F_\ell)$ is indeed a $(k,\ell)$-bounded linear forest decomposition of $G$. 
\end{proof}
For an illustration, see Figure \ref{image_4}.
\begin{figure}[h!]
    \centering
        \includegraphics[width=.4\textwidth]{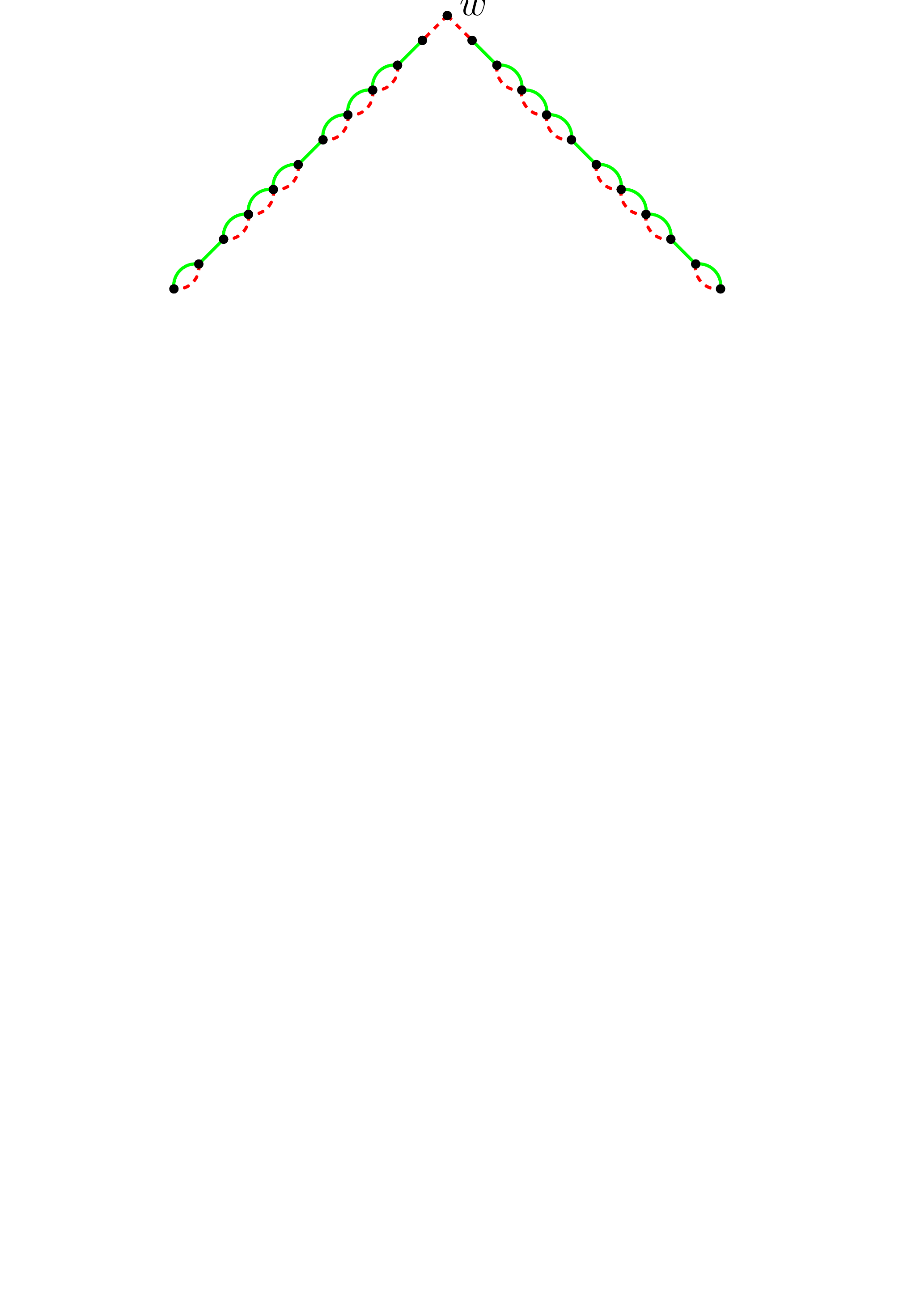}
        \caption{An illustration of the unique $(10,3)$-bounded linear forest decomposition of a long $(10,3)$-forcer. The dashed red edges are in $E(F_{3})$ and the solid green edges are in $E(F_{10})$. The vertex marked $w$ is the tip vertex of the long $(10,3)$-forcer.}\label{image_4}
\end{figure}

The next simple gadget will be used for the case when both linear forests have the same length bound. For some $k\geq 2$, a {\it symmetric $k$-forcer} is a path of length $k$ in which all edges except the last one are doubled. The unique vertex of degree 1 is called the tip vertex of the symmetric $k$-forcer. For an illustration, see Figure \ref{image_5}. 
\begin{figure}[h!]
    \centering
        \includegraphics[width=.08\textwidth]{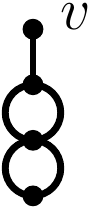}
        \caption{A symmetric $3$-forcer. The vertex marked $v$ is the tip vertex of the symmetric $3$-forcer.}\label{image_5}
\end{figure}
The decisive property of symmetric $k$-forcers, which is easy to see,  is the following: 

\begin{Proposition}\label{symmetric}
Let $G$ be a symmetric $k$-forcer for some positive integer $k \geq 2$. Then $G$ has a $(k,k)$-bounded linear forest decomposition $(F,F')$ which is unique up to exchanging $F$ and $F'$ and in which the tip vertex $v$ of $G$ satisfies $d_{F}(v)=0$ and is the last vertex of a path of length $k$ in $F'$.
\end{Proposition}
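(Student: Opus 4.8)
The plan is to follow the same template as the proofs of Propositions \ref{long1}, \ref{short}, and \ref{long}, exploiting the doubled edges to force almost the entire structure. I would begin by writing the underlying path of $G$ as $v_0v_1\ldots v_k$, so that the edges $v_iv_{i+1}$ are doubled for $0 \le i \le k-2$ while $v_{k-1}v_k$ is a single edge, and note that the tip vertex $v$ is $v_k$.

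First I would record the crucial constraint: in any decomposition $(F,F')$ into two linear forests, each pair of parallel edges must be split, one copy into $E(F)$ and one into $E(F')$, since a linear forest cannot contain both edges of a parallel pair (these would form a cycle of length two). Consequently, for every $i \in \{0,\ldots,k-2\}$ both $F$ and $F'$ contain exactly one edge joining $v_i$ and $v_{i+1}$. Thus each of $F$ and $F'$ contains the path $v_0v_1\ldots v_{k-1}$ of length $k-1$, and this already saturates every internal vertex $v_1,\ldots,v_{k-2}$ at degree $2$ in both parts.

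Next I would dispose of the one remaining edge. The single edge $v_{k-1}v_k$ lies in exactly one of $E(F),E(F')$; relabelling the two parts if necessary, assume it lies in $E(F')$. Then $F'$ contains the path $v_0\ldots v_{k-1}v_k$ of length exactly $k$, which is permitted since $F'$ is $k$-bounded, while $F$ is the path $v_0\ldots v_{k-1}$ of length $k-1 \le k$; both are genuine linear forests, since each is a single path and so no vertex exceeds degree $2$ and no cycle is created. This establishes existence, and in this decomposition $v_k$ is incident to no edge of $F$, so $d_F(v_k)=0$, while $v_k$ is the degree-$1$ endpoint of the length-$k$ path $v_0\ldots v_k$ in $F'$, exactly as claimed.

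Finally, for uniqueness I would observe that every step above was forced except the choice of which part receives the single edge $v_{k-1}v_k$; the two possibilities are interchanged precisely by swapping the roles of $F$ and $F'$, so the decomposition is unique up to this exchange. Since this is a simple forcing gadget rather than a genuinely hard statement, I do not expect a real obstacle here: the only points needing a moment's care are checking that the forced path in the $k$-bounded part has length exactly $k$ (so that it meets, but does not exceed, the length bound) and that $v_k$ is a maximal endpoint, both of which follow immediately from the degree count above.
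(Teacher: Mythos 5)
Your proof is correct, and it follows exactly the template the paper uses for the other forcer gadgets (Propositions \ref{long1}, \ref{short}, \ref{long}): parallel pairs must be split between the two linear forests, which forces the length-$(k-1)$ subpath into both parts, leaving only the single terminal edge to be assigned, with the two choices swapped by exchanging $F$ and $F'$. The paper itself omits the proof of this proposition as ``easy to see,'' and your argument is precisely the intended one.
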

For an illustration, see Figure \ref{image_6}.
\begin{figure}[h!]
    \centering
        \includegraphics[width=.08\textwidth]{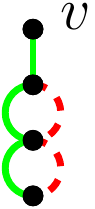}
        \caption{An illustration of a $(3,3)$-bounded linear forest decomposition of a symmetric $3$-forcer. The dashed red edges are in $E(F)$ and the solid green edges are in $E(F')$. The vertex marked $v$ is the tip vertex of the symmetric $3$-forcer.}\label{image_6}
\end{figure}

We now construct two more gadgets that we need for the case that one of the linear forests is unrestricted. For some integer $k \geq 2$, let $G_1,\ldots,G_4$ be four short $(k+1,k)$-forcers whose tip vertices are $v_1,\ldots,v_4$, respectively. We now obtain a  {\it long $(\infty,k)$-forcer} by first identifying $v_1$ and $v_2$ into a new vertex $w_1$ and identifying $v_3$ and $v_4$ into a new vertex $w_2$ and then adding a new vertex $w$ and the edges $w_1w$ and $w_2w$. We call $w$ the tip vertex of the long $(\infty,k)$-forcer. For an illustration, see Figure \ref{inf_1}.

\begin{figure}[h!]
    \centering
        \includegraphics[width=.2\textwidth]{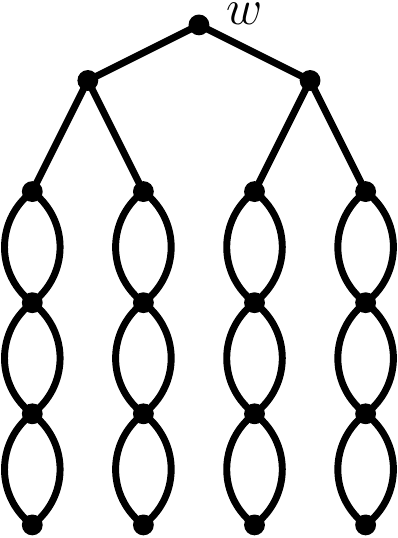}
        \caption{A long $(\infty,3)$-forcer. The vertex marked $w$ is the tip vertex of the long $(\infty,3)$-forcer.}\label{inf_1}
\end{figure}

The decisive property of long $(\infty,k)$-forcers is the following:

\begin{Proposition}\label{longinf}
Let $G$ be a long $(\infty,k)$-forcer for some positive integer $k \geq 2$. Then $G$ has a unique $(\infty,k)$-bounded linear forest decomposition $(F_\infty,F_k)$ in which the tip vertex $w$ of $G$ satisfies $d_{F_\infty}(w)=0$ and $d_{F_k}(w)=2$.
\end{Proposition}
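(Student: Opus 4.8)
The plan is to analyze an arbitrary $(\infty,k)$-bounded linear forest decomposition $(F_\infty,F_k)$ locally, working outward from the four short $(k+1,k)$-forcers $G_1,\ldots,G_4$ to the tip vertex $w$. The first step is to apply the reasoning of Proposition \ref{short} to each $G_i$. Strictly, Proposition \ref{short} concerns $(k',\ell')$-bounded decompositions in which \emph{both} forests are bounded, so I would begin by observing that its forcing argument uses only the $k$-boundedness of the smaller forest: restricting $(F_\infty,F_k)$ to $E(G_i)$ yields a decomposition of $G_i$ into a linear forest and a $k$-bounded linear forest, each pair of parallel edges contributes one edge to each part, and the single undoubled last edge of $G_i$ cannot lie in $F_k$, since the $k$ doubled edges already form a path of length $k$ in $F_k$. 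Consequently, within each $G_i$, the tip vertex $v_i$ is the endpoint of a path of length $k+1$ in $F_\infty$ and satisfies $d_{F_k}(v_i)=0$, exactly as in Proposition \ref{short}.

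Next I would pass to the identified vertices $w_1$ and $w_2$. Since $w_1$ is obtained by identifying the tips $v_1$ and $v_2$, the previous step shows that $w_1$ is the endpoint of a length-$(k+1)$ path of $F_\infty$ coming from $G_1$ and of another coming from $G_2$; these join into a single path of $F_\infty$ through $w_1$, and in particular $d_{F_\infty}(w_1)=2$ while $d_{F_k}(w_1)=0$, as the only edges of $G_1,G_2$ incident to $w_1$ are the two $F_\infty$-edges. The same holds for $w_2$. The key deduction is then immediate: because $F_\infty$ is a linear forest, and hence has maximum degree at most $2$, neither $w_1w$ nor $w_2w$ can be placed in $F_\infty$ without creating a vertex of degree $3$. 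Therefore both edges lie in $E(F_k)$, which gives $d_{F_\infty}(w)=0$ and $d_{F_k}(w)=2$, as claimed.

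To finish, I would verify validity and uniqueness. The two forced edges form a path $w_1 w w_2$ of length $2\le k$ in $F_k$ (using $k\ge 2$), and since $d_{F_k}(w_1)=d_{F_k}(w_2)=0$ within the forcers, this path does not extend, so $F_k$ stays $k$-bounded; the restriction to each $G_i$ was forced and the placement of $w_1w,w_2w$ was forced, so the decomposition is unique, and it is routine to check it is indeed a valid $(\infty,k)$-bounded decomposition.

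I do not expect a deep obstacle here; the only point requiring genuine care is the transfer of Proposition \ref{short} to the unbounded setting, since that proposition is literally stated for two bounded forests. The resolution is to note that the forcing survives precisely because it rests on the $k$-boundedness of $F_k$ rather than on any bound for $F_\infty$, so the single undoubled edge of each $G_i$ is still driven into the large forest. Once that is in place, the degree argument at $w_1,w_2$ and at $w$ is entirely elementary.
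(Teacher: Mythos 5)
Your proposal is correct and follows essentially the same route as the paper: force the decomposition inside each short $(k+1,k)$-forcer via (the argument of) Proposition \ref{short}, deduce $d_{F_\infty}(w_1)=d_{F_\infty}(w_2)=2$, and conclude $\{w_1w,w_2w\}\subseteq E(F_k)$ since $F_\infty$ has maximum degree $2$. The paper compresses the transfer to the unbounded setting into the phrase ``by a similar argument to the one in the proof of Proposition \ref{short}''; your explicit justification that the forcing rests only on the $k$-boundedness of $F_k$ is exactly the right way to fill that in.
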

\begin{proof}
Let $(F_\infty,F_k)$ be a $(\infty,k)$-bounded linear forest decomposition of $G$. By a similar argument to the one in the proof of Proposition \ref{short}, for $i=1,2$, the restriction of $(F_\infty,F_k)$ to $G_i$ is exactly as described in Proposition \ref{short}, so $v_i$ is contained in a path in $F_\infty$ that is entirely contained in $G_i$. This yields $d_{F_\infty}(w_1)=d_{F_\infty}(w_2)=2$. As $F_\infty$ is a linear forest, we obtain that $\{w_1w,w_2w\}\subseteq E(F_k)$. Hence the statement follows. Further, it is easy to see that $(F_\infty,F_k)$ is indeed a $(\infty,k)$-bounded linear forest decomposition of $G$. 
\end{proof}

For an illustration, see Figure \ref{inf_2}.
\begin{figure}[h!]
    \centering
        \includegraphics[width=.2\textwidth]{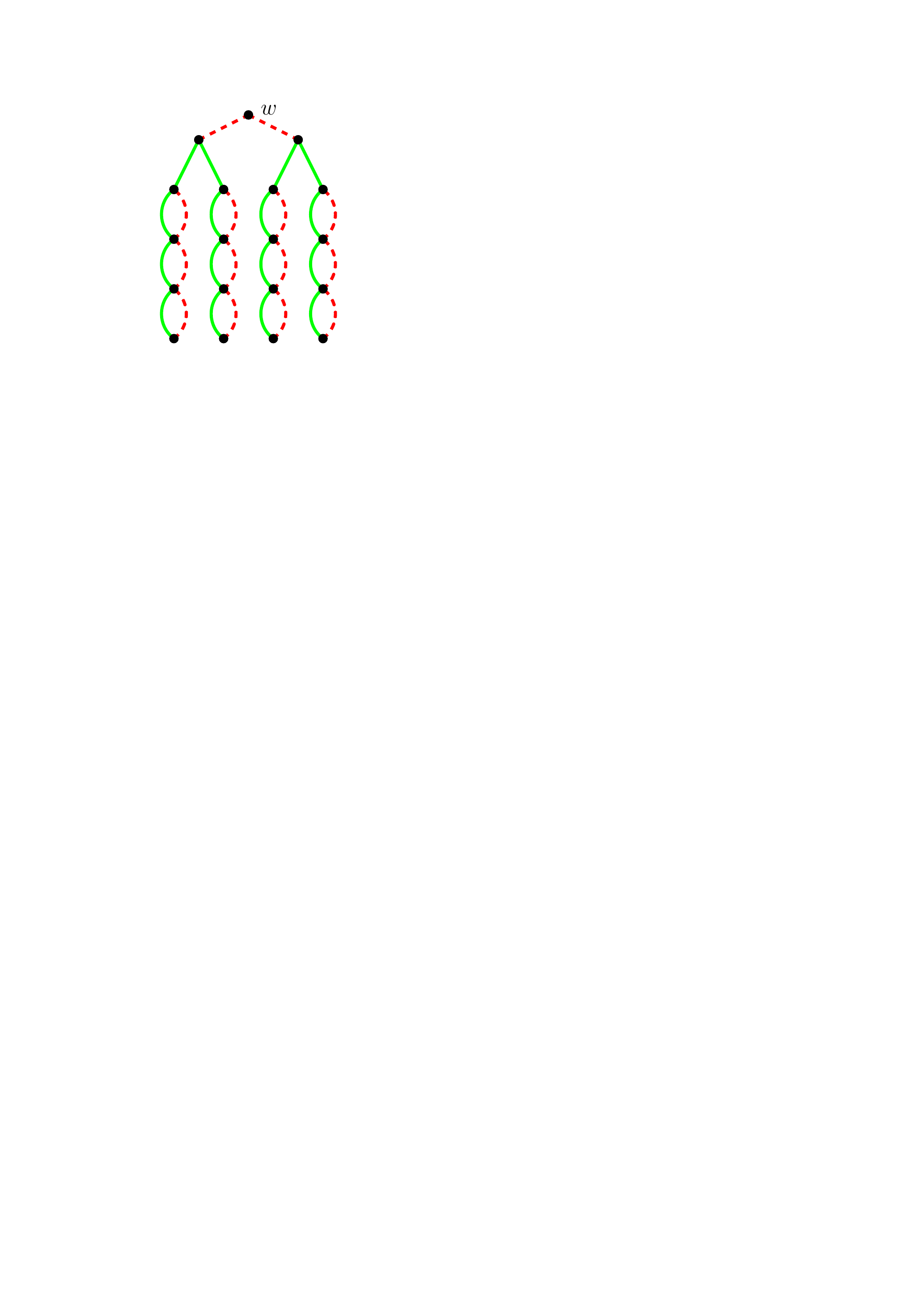}
        \caption{An illustration of a $(\infty,3)$-bounded linear forest decomposition $(F_\infty,F_3)$ of a long $(\infty,3)$-forcer. The dashed red edges are in $E(F_3)$ and the solid green edges are in $E(F_\infty)$. The vertex marked $w$ is the tip vertex of the long $(\infty,3)$-forcer.}\label{inf_2}
\end{figure}

For some positive integer $k\geq 2$, an {\it $(\infty,k)$-path forcer} is a path $P$ of length $k-1$ all of whose vertices except the last one have been identified with the tip vertices of two short $(k+1,k)$-forcers. The unique vertex of degree 1 of an $(\infty,k)$-path forcer is called the tip vertex of the $(\infty,k)$-path forcer.
 For an illustration, see Figure \ref{inf_3}.

\begin{figure}[h!]
    \centering
        \includegraphics[width=.4\textwidth]{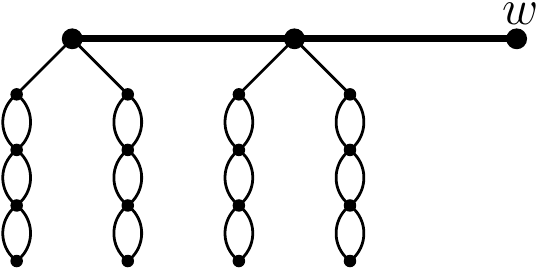}
        \caption{An $(\infty,3)$-path forcer. The vertex marked $w$ is the tip vertex of the $(\infty,3)$-path forcer.}\label{inf_3}
\end{figure}

The decisive property of $(\infty,k)$-path forcers is the following:

\begin{Proposition}\label{path}
Let $G$ be an $(\infty,k)$-path forcer for some positive integer $k \geq 2$. Then $G$ has a unique $(\infty,k)$-bounded linear forest decomposition $(F_\infty,F_k)$ in which the tip vertex $w$ of $G$ satisfies $d_{F_\infty}(w)=0$, is the last vertex of a path of length $k-1$ of $F_k$ and is contained in a path of length $k-1$ of $F_k$. 
\end{Proposition}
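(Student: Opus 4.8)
The plan is to mimic the proof of Proposition \ref{longinf}: first pin down the forced behaviour on each attached short $(k+1,k)$-forcer, and then propagate the consequences along the path $P$. Write $P = u_0 u_1 \cdots u_{k-1}$, so that $w = u_{k-1}$ is the unique degree-$1$ vertex, and recall that each of $u_0, \ldots, u_{k-2}$ is the common tip of two short $(k+1,k)$-forcers.

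First I would fix an arbitrary $(\infty,k)$-bounded linear forest decomposition $(F_\infty, F_k)$ of $G$ and argue that its restriction to each attached short $(k+1,k)$-forcer is forced. Exactly as in the proof of Proposition \ref{short}, for every pair of parallel edges one copy lies in $F_\infty$ and one in $F_k$; since $F_k$ is $k$-bounded, the unique undoubled edge of the forcer (the one incident to its tip) cannot be appended to the length-$k$ path that the doubled edges already create in $F_k$, so it must lie in $F_\infty$. Consequently each tip $u_i$ (for $0 \le i \le k-2$) is an endpoint of a path of length $k+1$ of $F_\infty$ lying entirely inside that forcer, and receives no $F_k$-edge from it. Because $u_i$ is the tip of two such forcers, this already yields $d_{F_\infty}(u_i) = 2$ from the forcers alone.

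Next I would propagate this along $P$. Since $F_\infty$ is a linear forest and each $u_i$ with $0 \le i \le k-2$ already has $F_\infty$-degree $2$, no edge of $P$ can lie in $F_\infty$; hence every edge of $P$ lies in $F_k$, so $P$ is a path of length $k-1$ of $F_k$, which is admissible as $k-1 \le k$. Moreover the endpoints $u_0$ and $w$ of $P$ receive no further $F_k$-edges (the forcers contribute none, and $w$ has degree $1$ in $G$), so $P$ is a maximal path of $F_k$. In particular $d_{F_\infty}(w) = 0$ and $w$ is the last vertex of a path of length $k-1$ of $F_k$, as required. Uniqueness follows because every choice above was forced, and a routine final check confirms that the described assignment is indeed an $(\infty,k)$-bounded linear forest decomposition of $G$.

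The main obstacle I anticipate is the first step: transferring the forcing behaviour of a short $(k+1,k)$-forcer, established in Proposition \ref{short} for $(k+1,k)$-bounded decompositions, to the $(\infty,k)$ setting. The key observation making this go through is that this forcing depends only on the small forest being $k$-bounded, while relaxing the other bound from $k+1$ to $\infty$ merely removes constraints; thus the same length-counting argument applies verbatim, which is precisely the ``similar argument'' invoked in the proof of Proposition \ref{longinf}.
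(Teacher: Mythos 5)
Your proposal is correct and follows essentially the same route as the paper: it re-runs the forcing argument of Proposition \ref{short} on each attached short $(k+1,k)$-forcer in the $(\infty,k)$ setting, concludes that every vertex of $V(P)-w$ already has $F_\infty$-degree $2$ from the two forcers, and deduces $E(P)\subseteq E(F_k)$. You in fact spell out in more detail the "similar argument" that the paper only gestures at, and the existence/uniqueness check at the end is the same routine verification.
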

\begin{proof}
Let $(F_\infty,F_k)$ be a $(\infty,k)$-bounded linear forest decomposition of $G$. By a similar argument to the one in the proof of Proposition \ref{short}, for $i=1,2$, the restriction of $(F_\infty,F_k)$ to the short $(k+1,k)$-forcers attached to the vertices in $V(P)-w$  is exactly as described in Proposition \ref{short}, so $v$ is contained two paths in $F_\infty$ which are edge-disjoint from $P$. This yields $E(P)\subseteq E(F_k)$. Hence the statement follows. Further, it is easy to see that $(F_\infty,F_k)$ is indeed a $(\infty,k)$-bounded linear forest decomposition of $G$. 
\end{proof}

For an illustration, see Figure \ref{inf_4}.
\begin{figure}[h!]
    \centering
        \includegraphics[width=.4\textwidth]{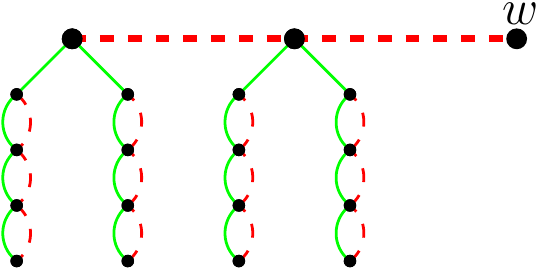}
        \caption{An illustration of a $(\infty,3)$-bounded linear forest decomposition $(F_\infty,F_3)$ of a $(\infty,3)$-path forcer. The dashed red edges are in $E(F_3)$ and the solid green edges are in $E(F_\infty)$. The vertex marked $w$ is the tip vertex of the $(\infty,3)$-path forcer.}\label{inf_4}
\end{figure}

\subsection{Cases involving a matching} \label{k1hard}
In this section, we prove the case of Theorem \ref{mainhard} where one of the parameters is equal to 1. More formally, we prove the following result:
\begin{Lemma}\label{hard1}
$(k,1)$-BLFD is NP-complete for all $k \in \{9,10,\ldots\}\cup \infty$.
\end{Lemma}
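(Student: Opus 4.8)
The plan is to prove NP-completeness of $(k,1)$-BLFD by reduction from $(3,B2)$-SAT (Theorem \ref{scott}). Membership in NP is immediate: a decomposition $(F_k,F_1)$ is a polynomial-size certificate that can be verified in polynomial time by checking that $F_1$ is a matching, that $F_k$ is a linear forest with all components of length at most $k$, and that the two edge sets partition $E(G)$. The substance is the hardness reduction, and the forcers from Section \ref{simgad} are clearly the intended tools: Proposition \ref{long1} shows a long $1$-forcer attached by its tip $v$ forces $v$ to be covered by the matching $F_1$ (with $d_{F_k}(v)=0$), while Proposition \ref{short1} shows a short $1$-forcer attached by its tip $w$ forces $w$ to lie at the end of a length-$1$ path of $F_k$ with $d_{F_1}(w)=0$. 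These two gadgets let me pin down, at any chosen vertex, whether its ``free'' incident edge is allowed to be in the matching or is forced into $F_k$.

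The construction I would carry out takes a $(3,B2)$-SAT instance with variables $X$ and clauses $\mathcal{C}$ and builds a graph $G$ as follows. For each variable $x$, since $x$ and $\bar x$ each appear in exactly two clauses, I would build a variable gadget with four ``output'' attachment points, two carrying the literal $x$ and two carrying $\bar x$, using forcers so that in any valid decomposition exactly one of the two polarities is ``active'': the intended encoding is that an edge being placed in the matching $F_1$ at an output represents that literal being TRUE, and the forcers enforce that $x$ and $\bar x$ receive opposite truth values. For each clause $C=\{\ell_1,\ell_2,\ell_3\}$ I would build a clause gadget joined to the three relevant literal outputs, designed so that the gadget admits a valid $(k,1)$-decomposition if and only if at least one of the three incident literal edges is set TRUE; the natural way to do this is to route the three literal edges into a common vertex (or short path) whose local degree constraints — at most one matching edge and at most two $F_k$ edges, together with the $k$-length cap — can be satisfied precisely when not all three literals are simultaneously FALSE. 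I would verify both directions: a satisfying assignment yields a decomposition by setting matching edges according to the true literals and extending via the forcers' canonical decompositions, and conversely any decomposition reads off a satisfying assignment because the forcers rigidly determine the relevant incidences.

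The two delicate points, which I expect to be the main obstacle, are consistency and the length bound. Consistency means ensuring that the two copies of literal $x$ (it appears in two clauses) are forced to agree, and that $x$ and $\bar x$ are forced to disagree; this is exactly the role of carefully chaining long and short $1$-forcers around the variable gadget so that the truth value propagates without leaving slack. The length bound is why the hypothesis $k\ge 9$ appears: the forcers of Section \ref{simgad} require $k\in\{4,\ldots\}\cup\infty$ to behave rigidly, but the clause and variable gadgets need enough room to route paths of $F_k$ through junction vertices without forming a component exceeding $k$ edges, and the value $9$ is presumably the smallest length for which the chosen gadget paths fit. I would therefore design the gadgets with explicit short paths whose lengths sum to at most $k$ at every junction, and the crux of the write-up is the case analysis at the clause vertex establishing the ``at least one true literal'' equivalence while respecting both the matching constraint and the $k$-bounded linear forest constraint simultaneously. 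The $k=\infty$ case follows from the same construction since the length bound is then vacuous, so the gadgets' rigidity (which holds for $k=\infty$ by the propositions) is all that is needed.
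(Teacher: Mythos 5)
Your high-level plan coincides with the paper's: reduce from $(3,B2)$-SAT and use the long and short $1$-forcers of Propositions \ref{long1} and \ref{short1} to rigidify the gadgets. But the proof is not actually carried out, and the one concrete design choice you commit to does not work as stated. You propose to encode ``literal TRUE'' as ``the output edge lies in the matching $F_1$'' and to implement a clause by routing the three literal edges into a single common vertex. At a degree-$3$ vertex the local constraint is \emph{exactly} one incident edge in $F_1$ and exactly two in $F_k$; under your convention this rejects clauses with two or three true literals rather than rejecting only the all-FALSE clause, so it does not implement disjunction. To make a single junction vertex encode OR you need the opposite, one-sided forcing (FALSE forces the clause edge into the linear forest, TRUE merely permits it to leave), and then two clause edges in $F_k$ meeting at $v_C$ concatenate paths from two different variable gadgets, so the length bookkeeping at the clause vertex becomes the crux --- and none of this is addressed. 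The paper avoids the issue entirely: in its construction a FALSE literal forces the corresponding clause edge into the \emph{matching}, and the clause gadget is a $6$-cycle $a_C^1b_C^1a_C^2b_C^2a_C^3b_C^3a_C^1$; if all three literals were false, all three edges entering the $a_C^i$ would be matching edges, forcing the entire $6$-cycle into $F_\infty$, a contradiction.

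Two further gaps. First, your variable gadget is required to force $x$ and $\bar x$ to take opposite values (``exactly one polarity active''), a property you do not construct; the paper's gadget (a $6$-cycle with long $1$-forcers on two vertices and short $1$-forcers on four pendants) enforces only the weaker and sufficient condition that at least one of the two polarity edges $u_x^1u_x^2$, $u_x^3u_x^4$ lies in $F_1$, with the assignment set arbitrarily if both do. Second, the bound $k\geq 9$ cannot be left as ``presumably the smallest length that fits'': it must be certified by exhibiting, for every satisfying assignment, an explicit decomposition whose larger forest is $9$-bounded. The paper does exactly this and then covers the whole range at once by a sandwich argument --- any $(\infty,1)$-decomposition of $G$ yields a satisfying assignment, and any satisfying assignment yields a $(9,1)$-decomposition, which is in particular a $(k,1)$-decomposition for every $k \in \{9,10,\ldots\}\cup\{\infty\}$. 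Your write-up needs both the explicit $9$-bounded construction and some version of this argument; without them the dependence on $k$ is unproven.
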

\begin{proof}
We prove Lemma \ref{hard1} by a reduction from $(3,B2)$-SAT. Let $(X,\mathcal{C})$ be an instance of $(3,B2)$-SAT. We first describe a variable gadget $G_x$ for some $x \in X$. First, we let $G_x$ contain a cycle $u_x^1u_x^2w_x^1u_x^3u_x^4w_x^2u_x^1$ on 6 vertices. Next, we add 4 more vertices $v_x^1,\ldots,v_x^4$ and edges $u_x^{i}v_x^{i}$ for $i=1,\ldots,4$. Finally, we add pendant long 1-forcers to $w_x^1$ and $w_x^2$ and we add pendant short 1-forcers to $v_x^{i}$ for $i=1,\ldots,4$. 
For an illustration, see Figure \ref{redu_1}.
\begin{figure}[h!]
    \centering
        \includegraphics[width=.3\textwidth]{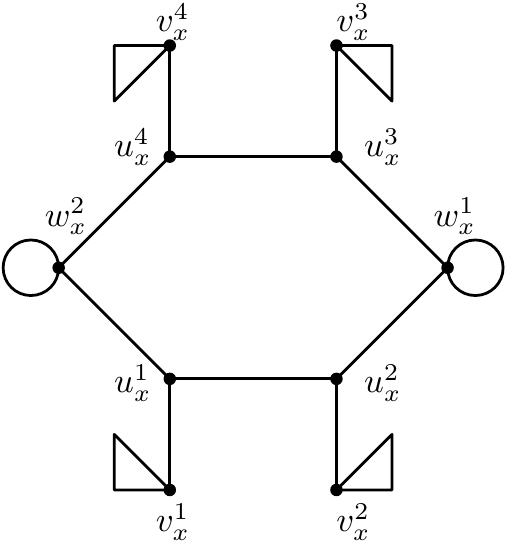}
        \caption{An illustration of a variable gadget $G_x$ for a variable $x$. The cycles indicate pendant long 1-forcers and the triangles indicate pendant short 1-forcers.}\label{redu_1}
\end{figure}
A clause gadget $G_C$ for some $C \in \mathcal{C}$ is a cycle $a_C^1b_C^1a_C^2b_C^2a_C^3b_C^3a_C^1$.

We now create $G$. First, we let $G$ contain a variable gadget $G_x$ for all $x \in X$ and a clause gadget $G_C$ for all $C \in \mathcal{C}$. Now, for every $x \in X$ and $C \in \mathcal{C}$ with $x \in C$, we add an edge linking $\{v_x^1,v_x^2\}$ and $\{a_C^1,a_C^2,a_C^3\}$ to $G$ and for every $x \in X$ and $C \in \mathcal{C}$ with $\bar{x} \in C$, we add an edge linking $\{v_x^3,v_x^4\}$ and $\{a_C^1,a_C^2,a_C^3\}$ to $G$. We do this in a way that we add a perfect matching between $\bigcup_{x \in X}\{v_x^1,\ldots,v_x^4\}$ and $\bigcup_{C \in \mathcal{C}}\{a_C^1,a_C^2,a_C^3\}$. Observe that this is possible because $(X,\mathcal{C})$ is an instance of $(3,B2)$-SAT. This finishes the description of $G$.

For an illustration, see Figure \ref{redu_2}.
\begin{figure}[h!]
    \centering
        \includegraphics[width=.6\textwidth]{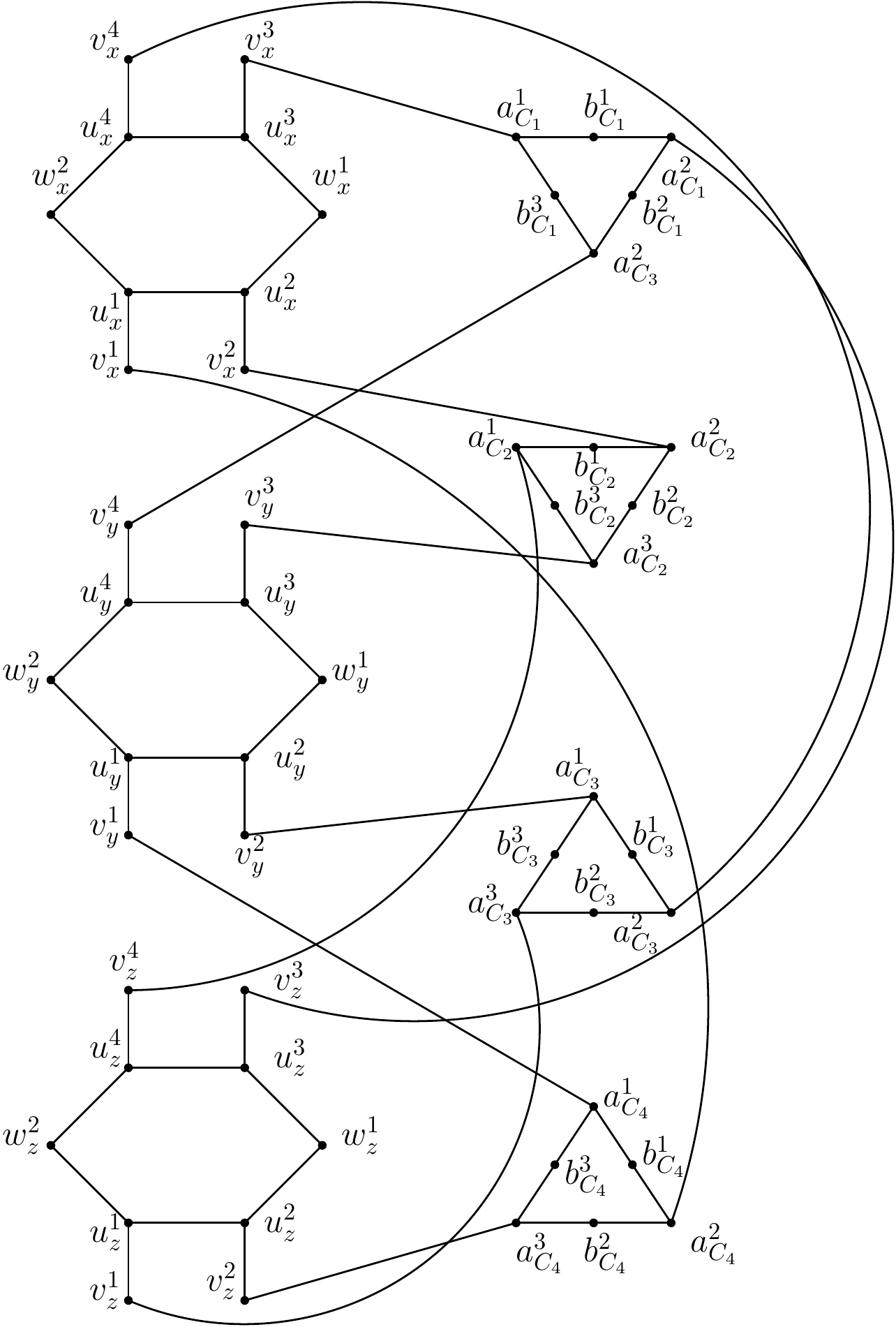}
        \caption{An example for the graph $G$ obtained from the instance $(X,\mathcal{C})$ of $(3,B2)$-SAT where $X=\{x,y,z\}$ and $\mathcal{C}=\{C_1=\{\bar{x},\bar{y},\bar{z}\},C_2=\{x,\bar{y},\bar{z}\},C_3=\{\bar{x},y,z\},C_4=\{x,y,z\}\}$. The short and long 1-forcers have been omitted due to space restrictions.}\label{redu_2}
\end{figure}
\begin{Claim}\label{facsat}
Suppose that $G$ is a yes-instance of $(\infty,1)$-BLFD. Then $(X,\mathcal{C})$ is a yes-instance of $(3,B2)$-SAT.
\end{Claim}
\begin{proof}
Let $(F_\infty,F_1)$ be a decomposition of $G$ into a linear forest and a matching. Consider some $x \in X$. By Proposition \ref{short1}, we obtain that the edges incident to $w_x^1$ and $w_x^2$ which are part of the  long 1-forcers are contained in $E(F_1)$. This yields that $u_x^2w_x^1,w_x^1u_x^3u_x^4w_x^2$ and $w_x^2u_x^1$ are contained in $E(F_\infty)$. As $E(F_\infty)$ cannot contain the cycle $u_x^1u_x^2w_x^1u_x^3u_x^4w_x^2u_x^1$ we obtain that at least one of $u_x^1u_x^2$ and $u_x^3u_x^4$ is not contained in $E(F_\infty)$. We now define a truth assignment $\phi:X \rightarrow \{TRUE,FALSE\}$ in the following way: If $u_x^1u_x^2 \in E(F_\infty)$, we set $\phi(x)$ to $TRUE$ and if $u_x^3 u_x^4 \in E(F_\infty)$, we set $\phi(x)$ to $FALSE$. If none of $u_x^1u_x^2$ and $u_x^3u_x^4$ are contained in $E(F_\infty)$, we set $\phi(x)$ to an arbitrary value.

We now prove that $\phi$ is a satisfying assignment for $(X,\mathcal{C})$. Consider some $C \in \mathcal{C}$. By symmetry, we may suppose that $C$ contains 3 positive literals $x_1,x_2,x_3$ and that $G$ contains the edges $v_{x_i}^1a_C^{i}$ for $i=1,2,3$ . Suppose for the sake of a contradiction that $\phi(x_1)=\phi(x_2)=\phi(x_3)=FALSE$. By construction for $i=1,2,3$, we obtain that $u_{x_i}^1u_{x_i}^2\in E(F_1)$ and hence $u_{x_i}^1v_{x_i}^1\in E(F_\infty)$. Further, by Proposition \ref{short1}, we obtain that $v_{x_i}^1$ is also incident to an edge in $E(F_\infty)$ which is contained in a pendant short 1-forcer. We hence obtain that the edge $v_{x_i}^1a_C^{i}$ is contained in $E(F_1)$ for $i=1,2,3$. We hence obtain that all the edges of the cycle $a_C^1b_C^1a_C^2b_C^2a_C^3b_C^3a_C^1$ are contained in $E(F_\infty)$, a contradiction to $F_\infty$ being a linear forest.
\end{proof}

\begin{Claim}\label{satfac}
Suppose that $(X,\mathcal{C})$ is a yes-instance of $(3,B2)$-SAT. Then $G$ is a yes-instance of $(9,1)$-BLFD.
\end{Claim}
\begin{proof}
We describe a $(9,1)$-bounded linear forest decomposition $(F_9,F_1)$ of $G$.
For all long and short 1-forcers, we distribute their edges to $E(F_9)$ and $E(F_1)$ as described in Propositions \ref{long1} and \ref{short1}, respectively. Next, for all $x \in X$, we let $u_x^2w_x^1,w_x^1u_x^3u_x^4w_x^2$ and $w_x^2u_x^1$ be contained in $E(F_\infty)$. Now let $\phi:X \rightarrow \{TRUE,FALSE\}$ be a satisfying assignment for $(X,\mathcal{C})$. For all $x \in X$ with $\phi(x)=TRUE$, we let $E(F_1)$ contain $u_x^3u_x^4,u_x^1v_x^1,u_x^2v_x^2$ and the two edges linking $\{v_x^3,v_x^4\}$ and $\bigcup_{C \in \mathcal{C}}\{a_C^1,a_C^2,a_C^3\}$. For all $x \in X$ with $\phi(x)=FALSE$, we let $E(F_1)$ contain $u_x^1u_x^2,u_x^3v_x^3,u_x^4v_x^4$ and the two edges linking $\{v_x^1,v_x^2\}$ and $\bigcup_{C \in \mathcal{C}}\{a_C^1,a_C^2,a_C^3\}$.

We now show how to extend $F_1$ to the clause gadgets. Consider some $C \in \mathcal{C}$. As $C$ is satisfied by $\phi$ and construction, we obtain that $|\delta_G(V(G_C))\cap E(F_1)|\leq 2$. If $|\delta_G(V(G_C))\cap E(F_1)|=0$, we let $E(F_1)$ contain $a_C^1b_C^1,a_C^2b_C^2$ and $a_C^3b_C^3$. If $|\delta_G(V(G_C))\cap E(F_1)|=1$, say $\delta_G(V(G_C))\cap E(F_1)$ contains the edge incident to $a_C^1$, we let $E(F_1)$ contain $b_C^1a_C^2$ and $a_C^3b_C^3$.  If $|\delta_G(V(G_C))\cap E(F_1)|=2$, say $\delta_G(V(G_C))\cap E(F_1)$ contains the edges incident to $a_C^1$ and $a_C^2$, we let $E(F_1)$ contain $b_C^2a_C^3$. Finally, we define $F_9$ by $E(F_9)=E(G)-E(F_1)$.
For an illustration, see Figure \ref{redu_3}.
\begin{figure}[h!]
    \centering
        \includegraphics[width=.6\textwidth]{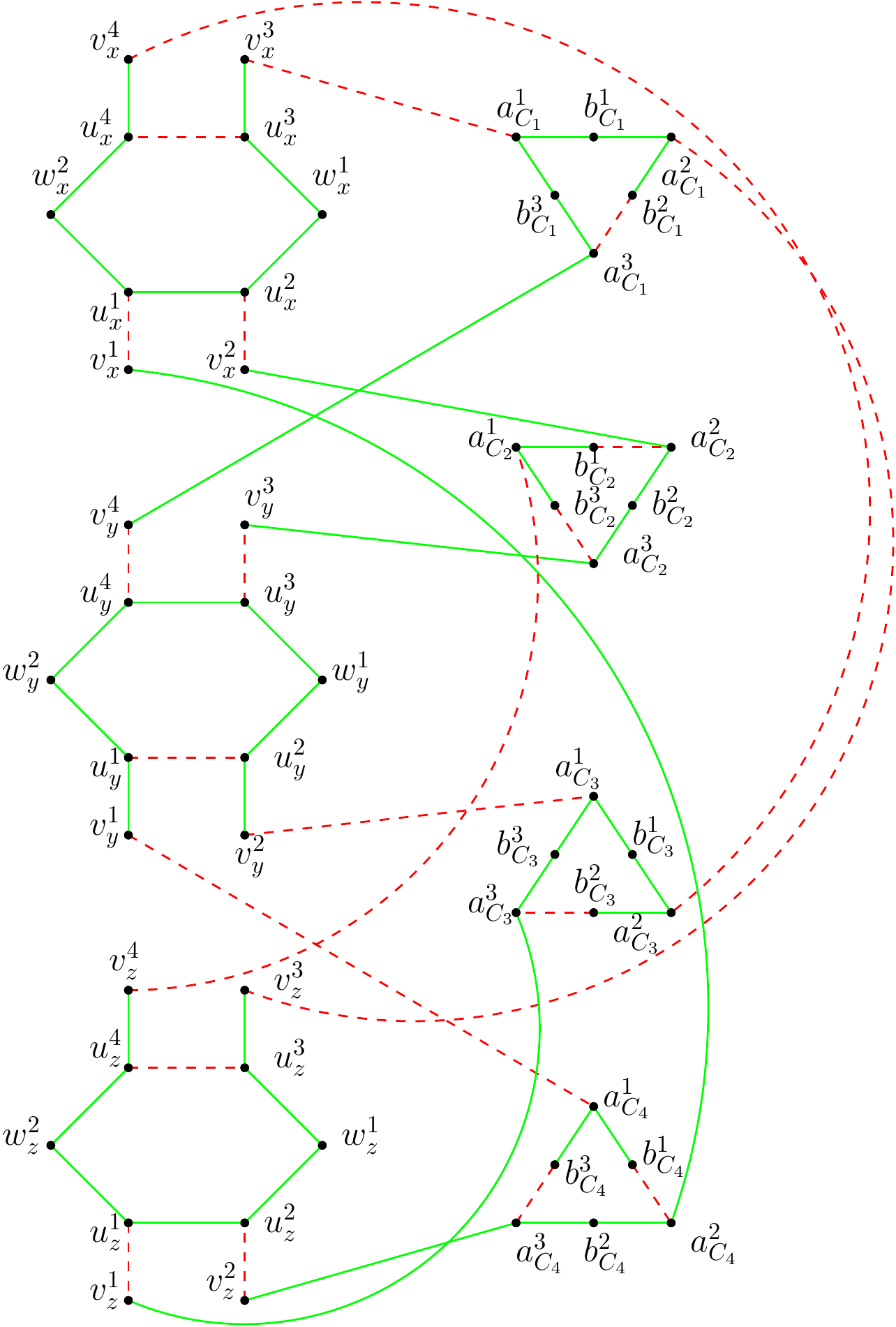}
        \caption{An example for the linear forest decomposition $(F_9,F_1)$ of the graph $G$ depicted in Figure \ref{redu_2} obtained from the instance $(X,\mathcal{C})$ of $(3,B2)$-SAT where $X=\{x,y,z\}$ and $\mathcal{C}=\{C_1=\{\bar{x},\bar{y},\bar{z}\},C_2=\{x,\bar{y},\bar{z}\},C_3=\{\bar{x},y,z\},C_4=\{x,y,z\}\}$ together with the satisfying assignment $\phi$ defined by $\phi(x)=TRUE,\phi(y)=FALSE$ and $\phi(z)=TRUE$. The dashed red edges are in $E(F_1)$ and the solid green edges are in $E(F_9)$. Again, the short and long 1-forcers have been omitted due to space restrictions.}\label{redu_3}
\end{figure}

It is easy to see that $(F_9,F_1)$ is a decomposition of $G$ into a 9-bounded linear forest and a matching. 
\end{proof}
Claims \ref{facsat} and \ref{satfac} imply Lemma \ref{hard1}.
\end{proof}
\subsection{A bounded length linear forest and an arbitrary linear forest}
\label{kinfhard}
Here we give a reduction for the case where one of the forests is unrestricted and the other one is restricted by a parameter which is at least 2. The reduction is similar to the one used in the proof of Lemma \ref{hard1}, but for the sake of readability, we give it separately. Drawings have been omitted due to their similarity with the ones in Section \ref{k1hard}. Concretely, we prove the following result:

\begin{Lemma}\label{hard2}
$(\infty,k)$-BLFD is NP-complete for all integers $k \geq 2$.
\end{Lemma}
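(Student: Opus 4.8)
The plan is to reduce from $(3,B2)$-SAT, closely mirroring the construction used in the proof of Lemma \ref{hard1} but replacing the matching-side gadgets (the short and long $1$-forcers) with their $(\infty,k)$-analogues, namely the long $(\infty,k)$-forcers and $(\infty,k)$-path forcers from Propositions \ref{longinf} and \ref{path}. The key observation is that in the $(k,1)$-reduction, the $1$-forcers were used to pin down the behaviour at the vertices $w_x^1,w_x^2$ (forcing incident edges into the matching $F_1$) and at the vertices $v_x^i$ (forcing the pendant edge into $F_k$ and leaving each $v_x^i$ able to accept at most one further edge from the matching). In the present setting the roles of the two forests are swapped in the sense that $F_\infty$ is now the \emph{unrestricted} forest and $F_k$ is the bounded one, so I would use the $(\infty,k)$-forcers to play the structural role previously played by the $1$-forcers, with $F_k$ taking over the part of the old matching $F_1$.

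Concretely, I would build a variable gadget $G_x$ on the same underlying $6$-cycle $u_x^1u_x^2w_x^1u_x^3u_x^4w_x^2u_x^1$ together with the pendant vertices $v_x^1,\ldots,v_x^4$ and edges $u_x^iv_x^i$. I would attach long $(\infty,k)$-forcers at $w_x^1,w_x^2$ (by Proposition \ref{longinf} these force the two incident cycle-edges at each $w_x^j$ into $F_\infty$, hence force $w_x^1u_x^3u_x^4w_x^2$ into $F_\infty$ and leave at least one of $u_x^1u_x^2,u_x^3u_x^4$ out of $F_\infty$), and attach $(\infty,k)$-path forcers at the $v_x^i$ (by Proposition \ref{path} these put $v_x^i$ at the end of a length-$(k-1)$ path of $F_k$, so $v_x^i$ can receive at most one further $F_k$-edge before the $k$-bound is violated, exactly the constraint the short $1$-forcers gave before). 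The clause gadget stays the cycle $a_C^1b_C^1a_C^2b_C^2a_C^3b_C^3a_C^1$, and the variable--clause edges are added as a perfect matching between the $v_x^i$ and the $a_C^j$ exactly as before. The truth assignment is read off by declaring $x$ true if $u_x^1u_x^2\in E(F_\infty)$ and false if $u_x^3u_x^4\in E(F_\infty)$, the forcers guaranteeing at least one of these holds.

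For the forward direction (a valid $(\infty,k)$-decomposition yields a satisfying assignment), the argument parallels Claim \ref{facsat}: the forcers dictate the edges around $w_x^1,w_x^2,v_x^i$; if some clause had all three literals false, the three variable--clause edges entering $G_C$ would all lie in $F_k$, forcing every edge of the clause cycle into $F_\infty$ and creating a cycle in the unrestricted linear forest, a contradiction. For the reverse direction (Claim \ref{satfac} analogue), given a satisfying $\phi$ I would distribute the forcer edges as in Propositions \ref{longinf} and \ref{path}, route the variable-cycle and pendant edges exactly as in the $(9,1)$-case (with $F_k$ in the role of the old matching), and then extend over each clause cycle using that a satisfied clause sends at most two edges of $F_k$ into $G_C$, so the remaining clause-cycle edges can be split between $F_k$ and $F_\infty$ without creating a cycle in $F_\infty$ or exceeding length bounds.

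The main obstacle I expect is verifying the length bounds on $F_k$ at the interface vertices: unlike the matching case, where ``at most one incident edge'' sufficed, here $F_k$-paths may thread through $v_x^i$ and into the clause gadget, and I must check that every $F_k$-path created by stitching together pendant-edge, variable--clause-edge, and clause-cycle segments has at most $k$ edges, and that no $F_k$-path closes into a cycle. Since $k\geq 2$, there is enough slack: each $v_x^i$ contributes at most its single pendant $F_k$-edge (length $k-1$ already consumed inside the path forcer only on the $F_\infty$ side, so the short segment at $v_x^i$ usable by $F_k$ is short), and each clause cycle contributes only short $F_k$-paths. Careful bookkeeping of exactly which edges at each $v_x^i$ go to $F_k$ versus $F_\infty$, together with the guarantee that a satisfied clause has at most two incident $F_k$-edges, should close the argument; this is the one place where the value of $k$ (specifically $k\geq 2$ rather than $k\geq 9$) genuinely enters, and is why this case is stated separately from Lemma \ref{hard1}.
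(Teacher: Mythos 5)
Your overall strategy is indeed the paper's: reduce from $(3,B2)$-SAT with the same variable $6$-cycle $u_x^1u_x^2w_x^1u_x^3u_x^4w_x^2u_x^1$, the same clause $6$-cycle, long $(\infty,k)$-forcers at $w_x^1,w_x^2$, and the same extraction of a truth assignment. But there is a genuine gap at the vertices $v_x^i$: you attach only an $(\infty,k)$-path forcer there, whereas the paper attaches \emph{both} a pendant short $(k+1,k)$-forcer \emph{and} a pendant $(\infty,k)$-path forcer, and the one you dropped is the one that actually plays the role of the old short $1$-forcer. Your analogy is reversed: in the $(k,1)$-case the short $1$-forcer places at $v_x^i$ one edge of the \emph{unbounded} forest (its tip has $d_{F_1}=0$ and lies on an $F_k$-path), so that once $u_x^iv_x^i$ is forced into the unbounded forest the budget $d\le 2$ there is saturated and the clause edge is pushed into the restricted forest. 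The analogue of that role here is the short $(k+1,k)$-forcer, whose tip is the endpoint of an $F_\infty$-path inside the forcer. The $(\infty,k)$-path forcer serves a different, \emph{additional} purpose with no counterpart in the matching case: once the clause edge is known to lie in $F_k$, the length-$(k-1)$ $F_k$-path ending at $v_x^i$ makes the clause edge the last edge of a maximal $F_k$-path, so both clause-cycle edges at $a_C^j$ must go to $F_\infty$ (a matching edge terminates automatically; a $k$-bounded path does not). Both gadgets are needed.

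With only the path forcer, $v_x^i$ has degree $3$ and, by Proposition \ref{path}, $d_{F_\infty}(v_x^i)=0$ inside the forcer. In the all-literals-false scenario one still deduces $u_{x_i}^1v_{x_i}^1\in E(F_\infty)$, but nothing then prevents the clause edge $v_{x_i}^1a_C^i$ from also being placed in $F_\infty$ (giving $d_{F_\infty}(v_{x_i}^1)=2$, which is legal). Locally one can two-colour the clause $6$-cycle alternately and continue the $F_\infty$-paths out through $v_{x_i}^1,u_{x_i}^1,w_{x_i}^2,\ldots$ without creating a cycle or a degree-$3$ vertex in $F_\infty$ and without any $F_k$-path exceeding length $k$, so no contradiction arises at an all-false clause and the forward direction of your reduction collapses; the construction appears to be unsound rather than merely under-verified. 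Your closing paragraph worries about length bookkeeping in the reverse direction, but that is not where the problem lies. Adding the pendant short $(k+1,k)$-forcer at each $v_x^i$ (so that $v_x^i$ has degree $4$ with one incident edge pre-committed to each of $F_\infty$ and $F_k$, forcing exactly one of $u_x^iv_x^i$ and $v_x^ia_C^j$ into each forest) repairs the argument and recovers the paper's proof.
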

\begin{proof}
We prove Lemma \ref{hard2} by a reduction from $(3,B2)$-SAT. Let $(X,\mathcal{C})$ be an instance of $(3,B2)$-SAT. We first describe a variable gadget $G_x$ for some $x \in X$. First, we let $G_x$ contain a cycle $u_x^1u_x^2w_x^1u_x^3u_x^4w_x^2u_x^1$ on 6 vertices. Next, we add 4 more vertices $v_x^1,\ldots,v_x^4$ and edges $u_x^{i}v_x^{i}$ for $i=1,\ldots,4$. Finally, we add pendant long $(\infty,k)$-forcers to $w_x^1$ and $w_x^2$ and we add a pendant short $(k+1,k)$-forcer and a pendant $(\infty,k)$-path forcer to $v_x^{i}$ for $i=1,\ldots,4$. A clause gadget $G_C$ for some $C \in \mathcal{C}$ is a cycle $a_C^1b_C^1a_C^2b_C^2a_C^3b_C^3a_C^1$.

We now create $G$. First, we let $G$ contain a variable gadget $G_x$ for all $x \in X$ and a clause gadget $G_C$ for all $C \in \mathcal{C}$. Now, for every $x \in X$ and $C \in \mathcal{C}$ with $x \in C$, we add an edge linking $\{v_x^1,v_x^2\}$ and $\{a_C^1,a_C^2,a_C^3\}$ to $G$ and for every $x \in X$ and $C \in \mathcal{C}$ with $\bar{x} \in C$, we add an edge linking $\{v_x^3,v_x^4\}$ and $\{a_C^1,a_C^2,a_C^3\}$ to $G$. We do this in a way that we add a perfect matching between $\bigcup_{x \in X}\{v_x^1,\ldots,v_x^4\}$ and $\bigcup_{C \in \mathcal{C}}\{a_C^1,a_C^2,a_C^3\}$. Observe that this is possible because $(X,\mathcal{C})$ is an instance of $(3,B2)$-SAT. This finishes the description of $G$. We show in the following that $G$ is a yes-instance of $(\infty,k)$-BLFD if and only if $(X,\mathcal{C})$ is a yes-instance of $(3,B2)$-SAT.

First suppose that $G$ is a yes-instance of $(\infty,k)$-BLFD.
Let $(F_\infty,F_k)$ be a $(\infty,k)$-bounded linear forest decomposition of $G$. Consider some $x \in X$. By Proposition \ref{longinf}, we obtain that the edges incident to $w_x^1$ and $w_x^2$ which are part of the long $(\infty,k)$-forcer are contained in $E(F_k)$. This yields that $u_x^2w_x^1,w_x^1u_x^3u_x^4w_x^2$ and $w_x^2u_x^1$ are contained in $E(F_\infty)$. As $F_\infty$ cannot contain the cycle $u_x^1u_x^2w_x^1u_x^3u_x^4w_x^2u_x^1$ we obtain that at least one of $u_x^1u_x^2$ and $u_x^3u_x^4$ is not contained in $E(F_\infty)$. We now define a truth assignment $\phi:X \rightarrow \{TRUE,FALSE\}$ in the following way: If $u_x^1u_x^2 \in E(F_\infty)$, we set $\phi(x)$ to $TRUE$ and if $u_x^3 u_x^4 \in E(F_\infty)$, we set $\phi(x)$ to $FALSE$. If none of $u_x^1u_x^2$ and $u_x^3u_x^4$ are contained in $E(F_\infty)$, we set $\phi(x)$ to an arbitrary value.

We now prove that $\phi$ is a satisfying assignment for $(X,\mathcal{C})$. Consider some $C \in \mathcal{C}$. By symmetry, we may suppose that $C$ contains 3 positive literals $x_1,x_2,x_3$ and that $G$ contains the edges $v_{x_i}^1a_C^{i}$ for $i=1,2,3$ . Suppose for the sake of a contradiction that $\phi(x_1)=\phi(x_2)=\phi(x_3)=FALSE$. By construction for $i=1,2,3$, we obtain that $u_{x_i}^1u_{x_i}^2\in E(F_k)$. By Proposition \ref{path}, there is a path of length $k-1$ that is entirely contained in the $(\infty,k)$-path forcer pendant at $v_{x_i}^1$ in $F_k$. This yields that if $u_{x_i}^1v_{x_i}^1\in E(F_k)$, then $F_k$ contains a path of length $k+1$, a contradiction.  We hence obtain $u_{x_i}^1v_{x_i}^1\in E(F_\infty)$. Further, by a similar argument to the one in Proposition \ref{short}, we obtain that $v_{x_i}^1$ is also incident to an edge in $E(F_\infty)$ which is contained in the pendant short $(k+1,k)$-forcer. As $F_k$ contains a path of length $k-1$ entirely contained in the pendant $(\infty,k)$-path forcer, we obtain that the edge $v_{x_i}^1a_C^{i}$ is the last edge of a path of length in $F_k$ which is disjoint from the clause gadget $G_C$ for $i=1,2,3$. We hence obtain that all the edges of the cycle $a_C^1b_C^1a_C^2b_C^2a_C^3b_C^3a_C^1$ are contained in $E(F_\infty)$, a contradiction to $F_\infty$ being a linear forest. Hence $\phi$ satisfies $(X,\mathcal{C})$ and so $(X,\mathcal{C})$ is a yes-instance of $(3,B2)$-SAT.

Now suppose that $(X,\mathcal{C})$ is a yes-instance of $(3,B2)$-SAT.
 Let $\phi:X \rightarrow \{TRUE,FALSE\}$ be a satisfying assignment for $(X,\mathcal{C})$. We now describe an $(\infty,k)$-bounded linear forest decomposition $(F_\infty,F_k)$ of $G$. For all long $(\infty,k)$-forcers, $(\infty,k)$-path forcers and short $(k+1,k)$-forcers, we let $E(F_k)$ contain the edges corresponding to the decompositions described in Propositions \ref{longinf}, \ref{path} and \ref{short}, respectively. For all $x \in X$ with $\phi(x)=TRUE$, we add to $E(F_k)$ the edges $u_x^3u_x^4,u_x^1v_x^1,u_x^2v_x^2$ and the two edges linking $\{v_x^3,v_x^4\}$ and $\bigcup_{C \in \mathcal{C}}\{a_C^1,a_C^2,a_C^3\}$. For all $x \in X$ with $\phi(x)=FALSE$, we add to $E(F_k)$ the edges $u_x^1u_x^2,u_x^3v_x^3,u_x^4v_x^4$ and the two edges linking $\{v_x^1,v_x^2\}$ and $\bigcup_{C \in \mathcal{C}}\{a_C^1,a_C^2,a_C^3\}$.

We now show how to extend $F_k$ to the clause gadgets. Consider some $C \in \mathcal{C}$. As $C$ is satisfied by $\phi$ and construction, we obtain that $|\delta_G(V(G_C))\cap E(F_k)|\leq 2$. If $|\delta_G(V(G_C))\cap E(F_k)|=0$, we let $E(F_1)$ contain $a_C^1b_C^1,a_C^2b_C^2$ and $a_C^3b_C^3$. If $|\delta_G(V(G_C))\cap E(F_k)|=1$, say $\delta_G(V(G_C))\cap E(F_k)$ contains the edge incident to $a_C^1$, we let $E(F_k)$ contain $b_C^1a_C^2$ and $a_C^3b_C^3$.  If $|\delta_G(V(G_C))\cap E(F_k)|=2$, say $\delta_G(V(G_C))\cap E(F_1)$ contains the edges incident to $a_C^1$ and $a_C^2$, we let $E(F_1)$ contain $b_C^2a_C^3$. Finally, we define $F_\infty$ by $E(F_\infty)=E(G)-E(F_k)$. It is easy to see that $(F_\infty,F_k)$ is an $(\infty,k)$-bounded linear forest decomposition of $G$. 

Hence $G$ is a yes-instance of $(\infty,k)$-BLFD, so the statement follows.

\end{proof}

\subsection{Two bounded length linear forests}\label{klhard}
In this section, we prove the hardness in the case that both $k$ and $\ell$ are positive integers strictly greater than 1. More concretely, we prove the following result.
\begin{Lemma}\label{hard3}
For any pair of integers $k,\ell \geq 2$, it is NP-complete to decide whether a given graph has a $(k,\ell)$-bounded linear forest decomposition.
\end{Lemma}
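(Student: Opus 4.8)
The plan is to reduce from $(3,B2)$-SAT, following the same template used in Lemmas \ref{hard1} and \ref{hard2}, but now using the symmetric-style forcers adapted to the case $k,\ell\geq 2$. By symmetry, I may assume $k\geq\ell\geq 2$. The forcers available are short $(k,\ell)$-forcers (Proposition \ref{short}), which force their tip vertex $v$ to satisfy $d_{F_\ell}(v)=0$ and to be the endpoint of a length-$k$ path in $F_k$, and long $(k,\ell)$-forcers (Proposition \ref{long}), which force $d_{F_k}(w)=0$ and $d_{F_\ell}(w)=2$ at the tip. The key design goal is: a variable gadget whose two states correspond to $\phi(x)=\mathrm{TRUE}$ and $\phi(x)=\mathrm{FALSE}$, transmitting the chosen truth value to the clause gadgets through edges that are \emph{forced} into one of the two forests depending on the variable state; and a clause gadget that is satisfiable precisely when at least one incoming edge escapes the forest that would otherwise close a cycle.

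Concretely, I would reuse the $6$-cycle $u_x^1u_x^2w_x^1u_x^3u_x^4w_x^2u_x^1$ variable gadget with four pendant attachment vertices $v_x^1,\dots,v_x^4$ joined by edges $u_x^iv_x^i$, exactly as before. I attach long $(k,\ell)$-forcers at $w_x^1,w_x^2$; by Proposition \ref{long} these force $d_{F_k}(w_x^j)=0$, so the four cycle edges incident to $w_x^1,w_x^2$ are all pushed into $F_k$, and hence (since $F_k$ cannot contain the whole $6$-cycle as a short path is impossible when $k\geq 2$ and a cycle is never a linear forest) at least one of $u_x^1u_x^2$, $u_x^3u_x^4$ is not in $F_k$, giving the truth assignment exactly as in Claim \ref{facsat}. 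At each $v_x^i$ I attach a combination of a short $(k,\ell)$-forcer together with whatever additional forcer is needed so that the \emph{attachment edge} $v_x^ia_C^j$ to the clause gadget is forced into $F_\ell$ whenever the corresponding literal is false; this is the direct analogue of the short $1$-forcer / path-forcer combination used in Lemma \ref{hard2}, now tuned so that the length budget on the $F_k$ side of each pendant path is already saturated, preventing $u_x^iv_x^i$ from lying in $F_k$. The clause gadget is again the $6$-cycle $a_C^1b_C^1a_C^2b_C^2a_C^3b_C^3a_C^1$, and the argument is that if all three literals of $C$ are false then all three attachment edges are in $F_\ell$, forcing all six cycle edges into $F_k$, which contradicts $F_k$ being a linear forest (a cycle is not a path). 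The converse direction constructs the decomposition explicitly from a satisfying assignment, distributing forcer edges per Propositions \ref{short} and \ref{long}, setting the variable-cycle edges according to $\phi$, and extending to each clause cycle depending on how many of its $\leq 2$ attachment edges land in $F_\ell$, just as in Claim \ref{satfac}.

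I would organize the proof as two claims mirroring Claims \ref{facsat} and \ref{satfac}: one showing a satisfying assignment yields a $(k,\ell)$-decomposition, and one showing a $(k,\ell)$-decomposition yields a satisfying assignment. The NP-membership is immediate since a decomposition is a polynomial-size certificate checkable in polynomial time, and the construction of $G$ from $(X,\mathcal{C})$ is clearly polynomial.

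The main obstacle I anticipate is calibrating the pendant gadgets at the $v_x^i$ so that the length bound $k$ on $F_k$ is \emph{tight} along the pendant path, thereby forbidding $u_x^iv_x^i\in E(F_k)$ and forcing the transmitted value into $F_\ell$ — this is exactly the role the $(\infty,k)$-path forcer played in Lemma \ref{hard2}, but with a finite $k$ the path forcer must be redesigned so that its forced $F_k$-path uses up precisely $k$ edges abutting $v_x^i$ rather than $k-1$ with an unbounded companion. A secondary technical point is handling the degenerate small cases (e.g.\ $k=\ell=2$, already known) and ensuring the forcers of Proposition \ref{short} exist and behave correctly for all $k>\ell\geq 2$ as well as the equal case $k=\ell$, where the symmetric $k$-forcer of Proposition \ref{symmetric} and its asymmetry-breaking must substitute for the $(k,\ell)$-forcers; verifying that the transmission of truth values survives the loss of the $F_k$/$F_\ell$ distinction in the symmetric case is where I expect the argument to require the most care.
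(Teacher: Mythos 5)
There is a genuine gap, and it sits one step earlier than the difficulty you flag at the end: with \emph{both} forests bounded, a reduction from $(3,B2)$-SAT via the cycle clause gadget cannot work, because the forcing mechanism becomes symmetric in $F_k$ and $F_\ell$. In Lemmas \ref{hard1} and \ref{hard2} the clause argument exploits an asymmetry: a false literal saturates its attachment edge in the \emph{bounded} forest, pushing the incident cycle edges into the unbounded forest $F_\infty$, so only the all-false configuration closes a cycle, while the all-true configuration is harmless because $F_\infty$ absorbs arbitrarily long paths. Once $F_k$ is also bounded, the pendant forcers at $v_x^i$ contribute fixed degrees to $F_k$ and $F_\ell$ regardless of the variable's state (only the side of $u_x^iv_x^i$ changes), and the budget $d_{F_k}(v_x^i)+d_{F_\ell}(v_x^i)\leq 4$ forces a symmetric split: whichever forest receives the attachment edge $v_x^ia_C^j$ receives it as the last edge of a saturated path. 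Hence both cycle edges at each $a_C^i$ are pushed into the single forest determined by that literal's value, and an all-TRUE clause closes a cycle in $F_\ell$ exactly as an all-FALSE clause closes one in $F_k$. Your converse direction therefore fails: a satisfying $(3,B2)$-SAT assignment may set all three literals of some clause true, and then no decomposition exists. What this construction actually encodes is a not-all-equal constraint, and the paper accordingly reduces from MNAE3SAT, replacing the clause cycle by a single degree-$3$ vertex $v_C$, at which any two linear forests must split the three incident edges $1$--$2$, yielding the not-all-equal semantics directly.

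The variable gadget also does not port. Attaching long $(k,\ell)$-forcers at $w_x^1,w_x^2$ forces the four cycle edges at those vertices into $F_k$, forming two paths of length $2$; placing either of $u_x^1u_x^2$, $u_x^3u_x^4$ into $F_k$ then creates a path of length $5$ in $F_k$, which is forbidden whenever $k\leq 4$, so for small $k$ both edges are forced into $F_\ell$ and the gadget has no binary state at all (and for $\ell$ small the $F_\ell$ side over-saturates too). The paper instead builds, for each variable, a cyclic chain alternating paths $P_i$ (forced into $F_k$ by attached long forcers) and $Q_i$ (forced into $F_\ell$ by attached short forcers) with pendant edges $v_ia_i$, $w_ib_i$; its two global states correspond to the attachment set $A_x$ being $(k,F)$-covered or $(\ell,F')$-covered (Lemma \ref{exists_gadget}), and a separate symmetric-forcer variant handles $k=\ell$. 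You would need both of these ideas --- the switch of the source problem to a not-all-equal variant and a variable gadget whose two states are realized as ``which forest saturates the pendant vertices'' --- to repair the argument.
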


The proof is split into two parts. In Section \ref{infgad}, we introduce a more involved gadget we need for the main reduction. In Section \ref{mainred}, we give the main proof of Lemma \ref{hard3}.
\subsubsection{A more involved gadget}\label{infgad}
Let $G$ be a graph and $A \subseteq V(G)$ with $d_G(a)=1$ for all $a \in A$. For a positive integer $k$ and a $k$-bounded linear forest $F$ of $G$, we say that $A$ is {\it $(k,F)$-covered} if every $a \in A$ is the endpoint of a path of length $k$ in $F$. For some integers $\alpha,k,\ell$ with $k \geq \ell \geq 2$, an {\it$(\alpha,k,\ell)$-gadget} is a graph $G$ together with a set $A \subseteq V(G)$ with the following properties:
\begin{itemize}
\item $d_G(a)=1$ for all $a \in A$,
\item $|A|=\alpha$,
\item there is a $(k,\ell)$-bounded linear forest decomposition $(F,F')$ of $G$ such that $A$ is $(k,F)$-covered,
\item there is a $(k,\ell)$-bounded linear forest decomposition $(F,F')$ of $G$ such that $A$ is $(\ell,F')$-covered,
\item for every $(k,\ell)$-bounded linear forest decomposition $(F,F')$ of $G$, either $A$ is $(k,F)$-covered or $A$ is $(\ell,F')$-covered.
\end{itemize}
\begin{Lemma}\label{exists_gadget}
For all positive integers $\alpha,k,\ell$ with $k \geq \ell \geq 2$ and $\alpha \geq 2$, there exists an $(\alpha,k,\ell)$-gadget whose size is linear in $\alpha$.
\end{Lemma}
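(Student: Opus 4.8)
The goal is to construct, for all $\alpha, k, \ell$ with $k \geq \ell \geq 2$ and $\alpha \geq 2$, an $(\alpha,k,\ell)$-gadget of size linear in $\alpha$. The defining feature of such a gadget is a kind of ``synchronization'' property: the $\alpha$ vertices in $A$ must all simultaneously be covered by $F$ (as endpoints of length-$k$ paths) or all simultaneously be covered by $F'$ (as endpoints of length-$\ell$ paths), with no mixed behaviour permitted, yet both ``all-$k$'' and ``all-$\ell$'' states must be realizable. My plan is to build this from a central connecting structure whose two forced decomposition states propagate a consistent choice out to every leaf of $A$.

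\textbf{The plan.} First I would construct a single ``coupling'' gadget on two pendant vertices $a_1, a_2$ that forces them into the same state: in any $(k,\ell)$-decomposition, either both $a_i$ are endpoints of length-$k$ paths of $F$, or both are endpoints of length-$\ell$ paths of $F'$. The natural way to achieve this is to join $a_1$ and $a_2$ through a short path or small cycle whose edges must be partitioned between $F$ and $F'$ in one of exactly two ways, and to attach short $(k,\ell)$-forcers (Proposition \ref{short}) and long $(k,\ell)$-forcers (Proposition \ref{long}) to internal vertices so as to pin down the lengths that the paths through $a_1,a_2$ are forced to have. The forcers give us total control: a short $(k,\ell)$-forcer forces its tip to be an endpoint of a length-$k$ path in $F$ and out of $F'$, while a long $(k,\ell)$-forcer forces its tip to have $F$-degree $0$ and $F'$-degree $2$. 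By placing these appropriately, the leftover edges at the core admit exactly two completions, corresponding to the two global states.

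\textbf{Assembling the full gadget.} Having such a two-leaf coupling gadget, I would chain them together to couple all $\alpha$ leaves. Concretely, I would take a ``backbone'' of core vertices $c_1, \ldots, c_\alpha$ and couple consecutive leaves $a_i, a_{i+1}$ (for $i = 1, \ldots, \alpha-1$) using copies of the basic coupling gadget, so that the same-state relation is transitive along the chain and forces all of $a_1, \ldots, a_\alpha$ into a common state. Since each coupling gadget has constant size (depending only on $k,\ell$, which are fixed per problem instance) and I use $\alpha - 1$ of them, the total size is $O(\alpha)$, giving the required linearity. I would then verify the two ``existence'' bullets by explicitly describing the global-$F$ and global-$F'$ decompositions, which simply means decomposing each coupling gadget in its corresponding state, and verify the third bullet (the dichotomy) by observing that the coupling property at each link, combined with transitivity, rules out any mixed assignment.

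\textbf{Main obstacle.} The delicate part will be engineering the core coupling structure so that it genuinely forces agreement between $a_1$ and $a_2$ while still admitting \emph{both} consistent states, and doing so uniformly for all $k \geq \ell \geq 2$. The subtlety is that the path-length bounds $k$ and $\ell$ differ, so a length-$k$ path reaching $a_i$ from one side interacts differently with the core than a length-$\ell$ path does; I will need to size the connecting paths and place the forcers so that the ``parity'' or length arithmetic works out identically regardless of the exact values of $k$ and $\ell$. Verifying that no third decomposition exists—i.e.\ that one cannot have $a_1$ in the $k$-state and $a_2$ in the $\ell$-state—is where the careful case analysis on how the core edges can be split between $F$ and $F'$ will be needed, and this is the step I expect to require the most care.
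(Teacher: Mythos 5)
Your high-level strategy --- propagate a single binary choice to every vertex of $A$ through a chain of constant-size couplers built from the forcer gadgets --- is exactly the idea behind the paper's construction (the paper uses a cycle rather than a chain, which is immaterial). However, what you have written is a plan rather than a proof: the entire content of the lemma is the explicit coupling structure and the verification that it admits exactly the two synchronized states, and you leave both unconstructed, explicitly deferring ``the delicate part.'' For comparison, the paper's coupler is concrete: consecutive pendant vertices $a_i$ and $a_{i+1}$ hang off vertices $v_i$ and $v_{i+1}$ that are joined by a path $P_i$ of length exactly $k-1$ whose interior vertices carry long $(k,\ell)$-forcers (forcing $E(P_i)\subseteq E(F_k)$, since Proposition \ref{long} already gives those vertices $F_\ell$-degree $2$), then a pendant edge $w_ib_i$, then a path $Q_i$ of length exactly $\ell-1$ whose interior vertices carry short $(k,\ell)$-forcers (forcing $E(Q_i)\subseteq E(F_\ell)$ by Proposition \ref{short}). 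The lengths $k-1$ and $\ell-1$ are precisely the ``length arithmetic'' you gesture at: if $v_ia_i\in E(F_k)$, then $a_i$ ends a length-$k$ path of $F_k$, so saturation at $w_i$ forces $w_ib_i\in E(F_\ell)$, which saturates the $\ell$-bound at $v_{i+1}$ and forces $v_{i+1}a_{i+1}\in E(F_k)$, and so on.

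There is also a step of your plan that would fail as stated. Your coupler relies on short and long $(k,\ell)$-forcers, i.e.\ on Propositions \ref{short} and \ref{long}, which are defined and proved only for $k>\ell$. When $k=\ell$ the two forests are interchangeable, so no gadget can pin a given edge to a \emph{named} forest; the paper must handle this case with a separate, symmetric construction using symmetric $k$-forcers (Proposition \ref{symmetric}), whose conclusion is only ``unique up to exchanging $F$ and $F'$,'' and the propagation argument has to be run from an arbitrary initial choice of which forest contains $E(P_1)$ rather than from a forced one. Your proposal would need this second construction (or a uniform one) to cover $k=\ell$, and without it the lemma is not proved for that case --- which is the case actually needed to settle the conjecture of Bermond et al.\ on $(k,k)$-BLFD.
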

\begin{proof}
We need to distinguish two different cases.
\begin{casefirst}\label{ungleich}
$k>\ell$.
\end{casefirst}
We fix some $k>\ell\geq 2$ and $\alpha \geq 2$ and construct an $(\alpha,k,\ell)$-gadget $G$. First, we let $V(G)$ contain $4\alpha$ vertices $v_1,\ldots,v_\alpha,w_1,\ldots,w_\alpha,a_1,\ldots,a_\alpha,b_1,\ldots,b_\alpha$ where the indices $1,\ldots,\alpha$ are considered to be elements of the cyclic additive group on $\alpha$ elements. Next, for $i=1,\ldots,\alpha$, we add the edges $v_ia_i$ and $w_ib_i$. Further, for $i=1,\ldots,\alpha$, we join $v_i$ and $w_i$ by a path $P_i$ of length $k-1$ and identify each of the interior vertices of this path with the tip vertex of a long $(k,\ell)$-forcer.  Finally, for $i=1,\ldots,\alpha$, we join $w_i$ and $v_{i+1}$ by a path $Q_i$ of length $\ell-1$ and identify each of the interior vertices of this path with the tip vertex of a short $(k,\ell)$-forcer. This finishes the description of $G$. For an illustration, see Figure \ref{image_7}. 
\begin{figure}[h!]
    \centering
        \includegraphics[width=.5\textwidth]{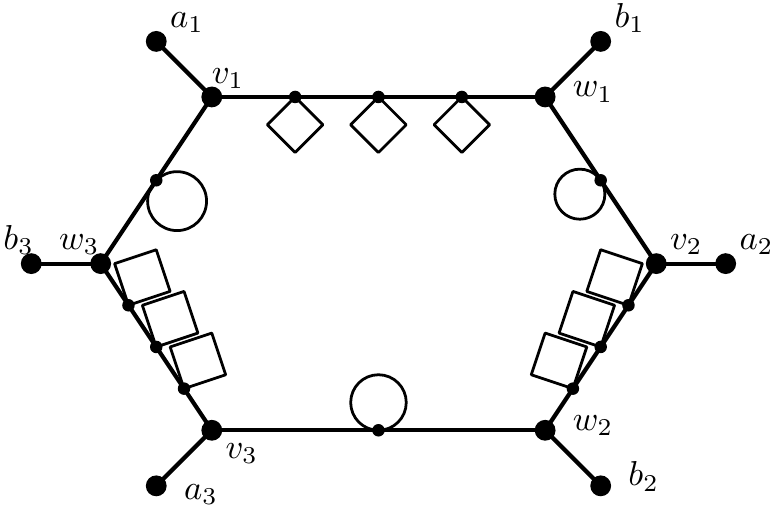}
        \caption{A $(3,5,3)$-gadget. The squares indicate attached long $(5,3)$-forcers and the cycles indicate attached short $(5,3)$-forcers.}\label{image_7}
\end{figure}
Let $A=\{a_1,\ldots,a_\alpha\}$ and observe that the size of $G$ is linear in $\alpha$. The following claims show that $(G,A)$ is an $(\alpha,k,\ell)$-gadget.
\begin{Claim}
There is a $(k,\ell)$-bounded linear forest decomposition $(F_k,F_\ell)$ of $G$ such that $A$ is $(k,F_k)$-covered.
\end{Claim}
\begin{proof}
For all $i=1,\ldots,\alpha$, let $E(F_k)$ contain $v_ia_i$ and $E(P_i)$ and let $E(F_\ell)$ contain $w_ib_i$ and $E(Q_i)$. Further, we extend this decomposition to the short and long $(k,\ell)$-forcers by their unique decompositions described in Propositions \ref{short} and \ref{long}. It follows immediately from the construction that $(F_k,F_\ell)$  is a $(k,\ell)$-bounded linear forest decomposition of $G$ and that $A$ is $(k,F_k)$-covered. For an illustration, see Figure \ref{image_8}. 
\begin{figure}[h!]
    \centering
        \includegraphics[width=.5\textwidth]{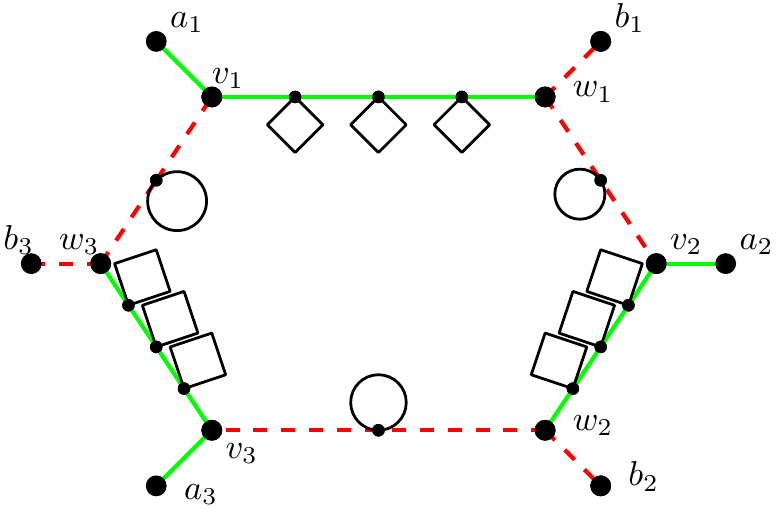}
        \caption{A $(5,3)$-bounded linear forest decomposition $(F_5,F_3)$ of a $(3,5,3)$-gadget in which $A$ is $(5,F_5)$-covered. The dashed red edges are in $E(F_3)$ and the solid green edges are in $E(F_5)$. The edges inside the short and long $(5,3)$-forcers have been omitted. They correspond to the ones in Figures \ref{image_2} and \ref{image_4}, respectively.}\label{image_8}
\end{figure}
\end{proof}
\begin{Claim}
There is a $(k,\ell)$-bounded linear forest decomposition $(F_k,F_\ell)$ of $G$ such that $A$ is $(\ell,F_\ell)$-covered.
\end{Claim}
\begin{proof}
For all $i=1,\ldots,\alpha$, let $E(F_k)$ contain $w_ib_i$ and $E(P_i)$ and let $E(F_\ell)$ contain $v_ia_i$ and $E(Q_i)$. Further, we extend this decomposition to the short and long $(k,\ell)$-forcers by their unique decompositions described in Propositions \ref{short} and \ref{long}. It follows immediately from the construction that $(F_k,F_\ell)$  is a $(k,\ell)$-bounded linear forest decomposition of $G$ and that $A$ is $(\ell,F_\ell)$-covered. For an illustration, see Figure \ref{image_9}. 
\begin{figure}[h!]
    \centering
        \includegraphics[width=.5\textwidth]{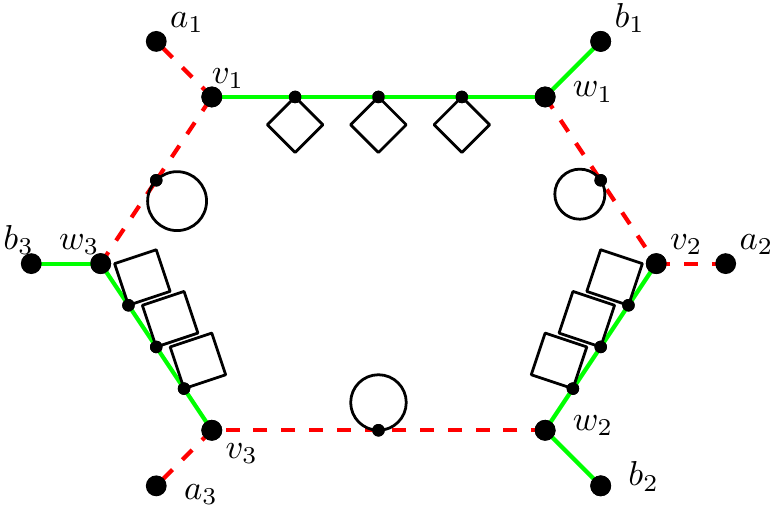}
        \caption{A $(5,3)$-bounded linear forest decomposition $(F_5,F_3)$ of a $(3,5,3)$-gadget in which $A$ is $(3,F_3)$-covered. The dashed red edges are in $E(F_3)$ and the solid green edges are in $E(F_5)$. The edges inside the short and long $(5,3)$-forcers have been omitted. They correspond to the ones in Figures \ref{image_2} and \ref{image_4}, respectively.}\label{image_9}
\end{figure}
\end{proof}
\begin{Claim}
For every $(k,\ell)$-bounded linear forest decomposition $(F_k,F_\ell)$ of $G$, either $A$ is $(k,F_k)$-covered or $A$ is $(\ell,F_\ell)$-covered.
\end{Claim}
\begin{proof}
Let $(F_k,F_\ell)$ be a $(k,\ell)$-bounded linear forest decomposition of $G$. For $i=1,\ldots,\alpha$, by Proposition \ref{long}, we obtain that every interior vertex of $P_i$ is incident to two edges of $E(F_\ell)$ which are contained in a long $(k,\ell)$-forcer. This yields that both the edges of $P_i$ which are incident to this vertex are contained in $E(F_k)$. This yields that $E(P_i)\subseteq E(F_k)$ for $i=1,\ldots,\alpha$.   Next, if $\ell\geq 3$, for $i=1,\ldots,\alpha$, by Proposition \ref{short}, we obtain that every interior vertex of $Q_i$ is the last vertex of a path of length $k$ in $F_k$ which is contained in a short $(k,\ell)$-forcer. This yields that both the edges of $Q_i$ which are incident to this vertex are contained in $E(F_\ell)$. This yields that $E(Q_i)\subseteq E(F_\ell)$ for $i=1,\ldots,\alpha$. If $\ell=2$, then for $i=1,\ldots,\alpha$, the unique edge of $Q_i$ cannot be contained in $E(F_k)$, as this would force $F_k$ to contain a $v_iw_{i+1}$-path of length $2k-1>k$. In either case, we obtain that $E(P_i)\subseteq E(F_k)$ and $E(Q_i)\subseteq E(F_\ell)$ for $i=1,\ldots,\alpha$.

Now suppose that $v_1a_1 \in E(F_k)$. We will show that $v_ia_i \in E(F_k)$ for all $i=1,\ldots,\alpha$. Suppose that this is the case for all integers up to some fixed $i$. Inductively, it suffices to prove that the statement holds for $i+1$. Observe that $F_k$ contains an $a_iw_i$-path of length $k$. This yields that $w_ib_i \in E(F_\ell)$. Hence $F_\ell$ contains a $b_iv_{i+1}$-path of length $\ell$. It follows that $v_{i+1}a_{i+1}\in E(F_k)$. We hence obtain that $v_ia_i \in E(F_k)$ for all $i=1,\ldots,\alpha$. As $P_i\subseteq E(F_k)$ for $i=1,\ldots,\alpha$, we obtain that $a_i$ is the last vertex of a path of length $k$ in $F_k$, hence $A$ is $(k,F_k)$-covered. A similar argument shows that if $v_1a_1 \in E(F_\ell)$, then $A$ is $(\ell,F_\ell)$-covered.
\end{proof}
This finishes Case \ref{ungleich}. We now consider the remaining case.

\begin{casefirst}\label{gleich}
$k=\ell$.
\end{casefirst}
The gadget for Case \ref{gleich} is somewhat similar to the one for Case \ref{ungleich}, but in order to improve readability, we treat this case separately. 
We fix some $k\geq 2$ and $\alpha \geq 2$ and construct an $(\alpha,k,k)$-gadget $G$. First, we let $V(G)$ contain $4\alpha$ vertices $v_1,\ldots,v_\alpha,w_1,\ldots,w_\alpha, \\ a_1,\ldots,a_\alpha,b_1,\ldots,b_\alpha$ where the indices $1,\ldots,\alpha$ are considered to be elements of the cyclic additive group on $\alpha$ elements. Next, for $i=1,\ldots,\alpha$, we add the edges $v_ia_i$ and $w_ib_i$. Further, for $i=1,\ldots,\alpha$, we join $v_i$ and $w_i$ by a path $P_i$ of length $k-1$ and identify each of the interior vertices of this path with the tip vertex of a symmetric $k$-forcer.  Finally, for $i=1,\ldots,\alpha$, we join $w_i$ and $v_{i+1}$ by a path $Q_i$ of length $\ell-1$ and identify each of the interior vertices of this path with the tip vertex of a symmetric $k$-forcer. This finishes the description of $G$. For an illustration, see Figure \ref{image_10}. 
\begin{figure}[h!]
    \centering
        \includegraphics[width=.5\textwidth]{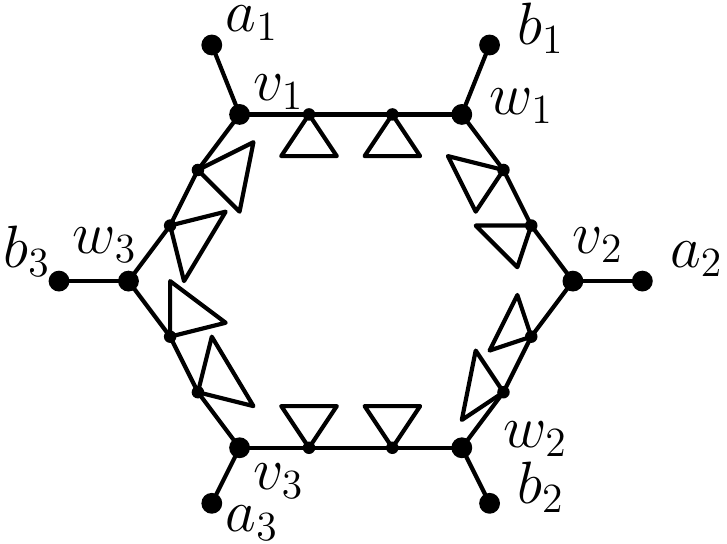}
        \caption{A $(3,4,4)$-gadget. The triangles indicate attached symmetric 4-forcers.}\label{image_10}
\end{figure}

Let $A=\{a_1,\ldots,a_\alpha\}$ and observe that the size of $G$ is linear in $\alpha$. The following claims show that $(G,A)$ is an $(\alpha,k,k)$-gadget indeed.
\begin{Claim}
There is a $(k,k)$-bounded linear forest decomposition $(F,F')$ of $G$ such that $A$ is $(k,F)$-covered.
\end{Claim}
\begin{proof}
For all $i=1,\ldots,\alpha$, let $E(F)$ contain $v_ia_i$ and $E(P_i)$ and let $E(F')$ contain $w_ib_i$ and $E(Q_i)$. Further, we extend this decomposition to the symmetric $k$-forcers by their unique decompositions described in Proposition \ref{symmetric} for which every interior vertex of $P_i$ is incident to an edge of $F'$ and every interior vertex of $Q_i$ is incident to an edge of $F$ in the attached symmetric $k$-forcer for $i=1,\ldots,\alpha$. It follows immediately from the construction that $(F,F')$  is a $(k,k)$-bounded linear forest decomposition of $G$ and that $A$ is $(k,F)$-covered. For an illustration, see Figure \ref{image_11}. 

\begin{figure}[h!]
    \centering
        \includegraphics[width=.5\textwidth]{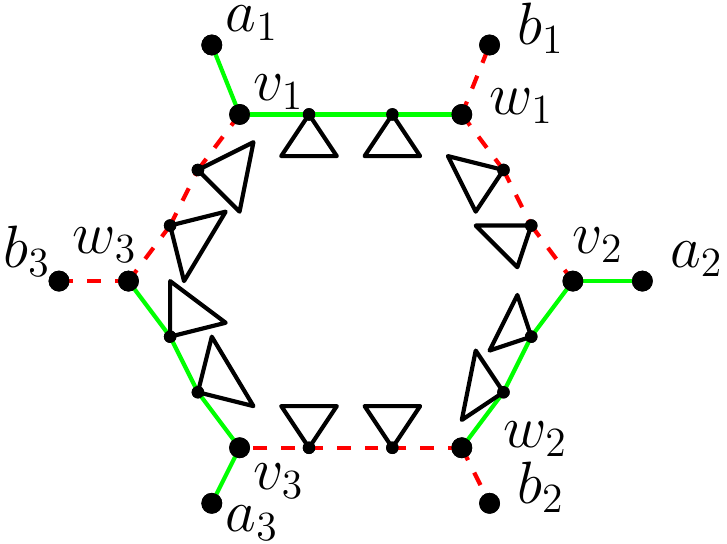}
        \caption{A $(4,4)$-bounded linear forest decomposition $(F,F')$ of a $(3,4,4)$-gadget in which $A$ is $(4,F)$-covered. The dashed red edges are in $E(F')$ and the solid green edges are in $E(F)$. The edges inside the symmetric $4$-forcers have been omitted. They correspond to the one in Figure \ref{image_6}.}\label{image_11}
\end{figure}
\end{proof}
\begin{Claim}
For every $(k,k)$-bounded linear forest decomposition $(F,F')$ of $G$, either $A$ is $(k,F)$-covered or $A$ is $(k,F')$-covered.
\end{Claim}
\begin{proof}
Let $(F,F')$ be a $(k,k)$-bounded linear forest decomposition of $G$. Let first $v$ be an interior vertex of $P_i$ for some $i \in \{1,\ldots,\alpha\}$. By Proposition \ref{symmetric}, we obtain that $v$ is the last vertex of a path of length $k$ in $F$ or $F'$ that is entirely contained in the symmetric $k$-forcer attached to $v$. We obtain that the two edges of $P_i$ that are incident  to $v$ are either both contained in $E(F)$ or both contained in $E(F')$. This yields that either $E(P_i)\subseteq E(F)$ or $E(P_i)\subseteq E(F')$ holds. Similarly, we obtain that for $i=1,\ldots,\alpha$, one of $E(Q_i)\subseteq E(F)$ and $E(Q_i)\subseteq E(F')$ holds.

By symmetry, we may suppose that $E(P_1)\subseteq E(F)$ holds. Further, suppose that $v_1a_1\in E(F)$ holds. We will show that $v_ia_i \in E(F)$ and $E(P_i)\subseteq E(F)$ hold for all $i=1,\ldots,\alpha$. Suppose that this is the case for all integers up to some fixed $i$. Inductively, it suffices to prove that the statement holds for $i+1$. Observe that $F$ contains an $a_iw_i$-path of length $k$. This yields that $w_ib_i \in E(F')$ and $E(Q_i)\subseteq E(F')$. Hence $F'$ contains a $b_iv_{i+1}$-path of length $k$. It follows that $v_{i+1}a_{i+1}\in E(F)$ and $E(P_{i+1})\subseteq E(F)$. We hence obtain that $v_ia_i \in E(F)$ and $E(P_i)\subseteq E(F)$ hold for all $i=1,\ldots,\alpha$. As $P_i\subseteq E(F)$ for $i=1,\ldots,\alpha$, we obtain that $a_i$ is the last vertex of a path of length $k$ in $F$, hence $A$ is $(k,F)$-covered. A similar argument shows that if $v_1a_1 \in E(F')$, then $A$ is $(k,F')$-covered.
\end{proof}
\end{proof}
\subsubsection{The main reduction}\label{mainred}
We are now ready to give the main proof of Lemma \ref{hard3}.
\begin{proof}(of Lemma \ref{hard3})
We prove this by a reduction from MNAE3SAT . Let $(X,\mathcal{C})$ be an instance of MNAE3SAT. We now create an instance $G$ of $(k,\ell)$-BLFD in the following way: For every $x \in X$, we let $G$ contain an $(\alpha_x,k,\ell)$-gadget $(G_x,A_x)$ where $\alpha_x$ is the number of occurences of $x$ in $\mathcal{C}$. Observe that this gadget always exists by Lemma \ref{exists_gadget}. Next for every $C \in \mathcal{C}$, we let $G$ contain a vertex $v_C$. We now add an edge $v_C$ to a vertex in $A_x$ whenever $x$ is contained in $C$. We choose these edges in a way that every vertex in $A_x$ is incident to exactly one edge that is not contained in $E(G_x)$ for all $x \in X$. This finishes the description of $G$. For an illustration, see Figure \ref{redu_4}.
\begin{figure}[h!]
    \centering
        \includegraphics[width=.6\textwidth]{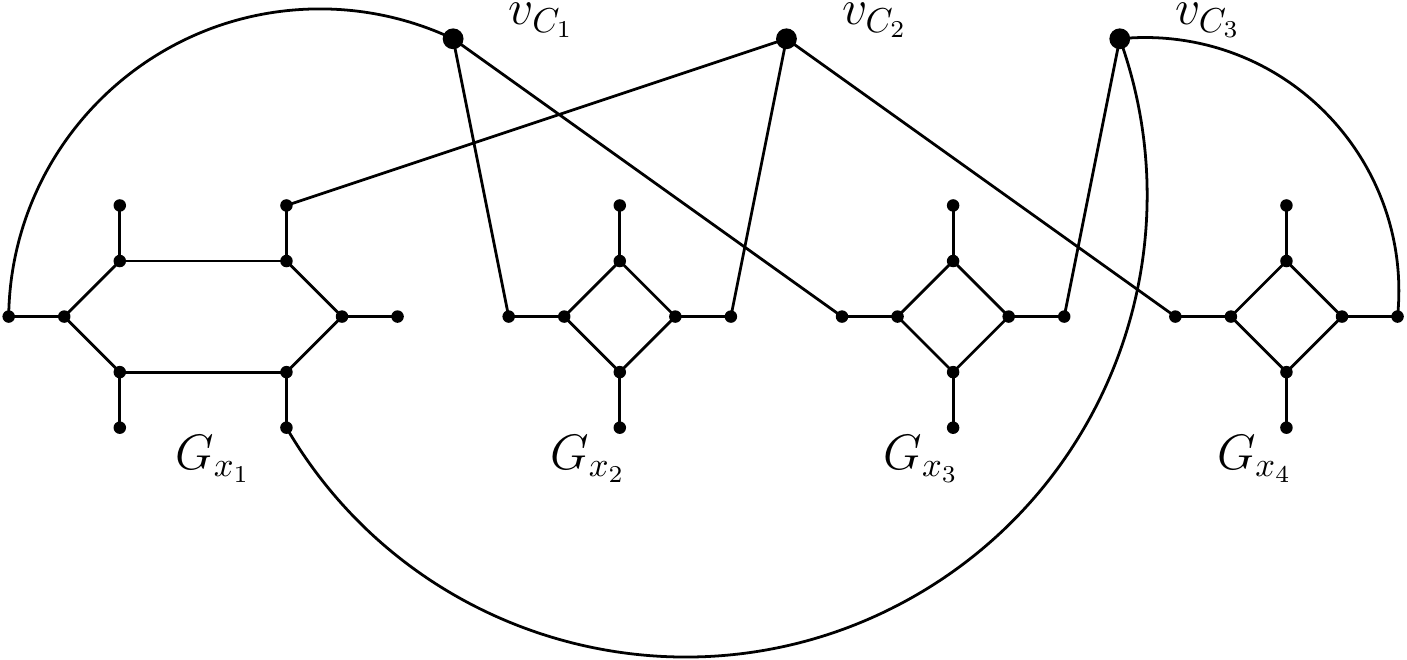}
        \caption{An example for the construction of $G$ in Lemma \ref{hard3} where $k=\ell=2,X=\{x_1,\ldots,x_4\}$ and $\mathcal{C}=\{C_1=\{x_1,x_2,x_3\},C_2=\{x_1,x_2,x_4\},C_3=\{x_1,x_3,x_4\}\}$.}\label{redu_4}
\end{figure}

By the second part of Lemma \ref{exists_gadget}, we have that the size of $G$ is polynomial in the size of $(X,\mathcal{C})$. We now show that $G$ is a yes-instance of $(k,\ell)$-BLFD if and only if $(X,\mathcal{C})$ is a yes-instance of MNAE3SAT.

First suppose that $G$ is a yes-instance of $(k,\ell)$-BLFD, so there is a $(k,\ell)$-bounded linear forest decomposition $(F,F')$ of $G$. For all $x \in X$, as $G_x$ is a $(\alpha_x,k,\ell)$-gadget, we obtain that either $A_x$ is $(k,F)$-covered or $A_x$ is $(\ell,F')$-covered. We now define a truth assignment $\phi:X \rightarrow \{TRUE,FALSE\}$ in the following way: We set $\phi(x)=TRUE$ if $A_x$ is $(k,F)$-covered and we set $\phi(x)=FALSE$ if $A_x$ is $(\ell,F')$-covered. In order to show that $\phi$ is satisfying indeed, consider some $C \in \mathcal{C}$. As $v_C$ is of degree 3 in $G$ and $F$ and $F'$ are linear forests, we obtain that there are variables $x,y \in C$ such that $E(F)$ contains an edge linking $A_x$ and $v_C$ and $E(F')$ contains an edge linking $A_y$ and $v_C$. As $F$ is a $k$-bounded linear forest, we obtain that $A_x$ cannot be $(k,\bar{F})$-covered where $\bar{F}$ is the restriction  of $F$ to $G_x$. It follows that $A_x$ is $(\ell, F')$-covered, so $\phi(x)=FALSE$. Similarly, we obtain that $\phi(y)=TRUE$, so $C$ is satisfied. As $C \in \mathcal{C}$ was chosen arbitrarily, we obtain that $(X,\mathcal{C})$ is satisfied by $\phi$, so $(X,\mathcal{C})$ is a yes-instance of MNAE3SAT.

Now suppose that $(X,\mathcal{C})$ is a yes-instance of MNAE3SAT, so there is a satisfying assignment $\phi:X \rightarrow \{TRUE,FALSE\}$ for $(X,\mathcal{C})$. For every $x \in X$, as $G_x$ is an $(\alpha_x,k,\ell)$-gadget, we can now choose a $(k,\ell)$-bounded linear forest decomposition $(F_x,F'_x)$ of $G_x$ such that $A_x$ is $(k,F_x)$-covered if $\phi(x)=TRUE$ and  $(\ell,F'_x)$-covered if $\phi(x)=FALSE$. We now create $F$ by adding all the edges leaving $G_x$ for a variable $x \in X$ with $\phi(x)=FALSE$ to $\bigcup_{x \in X} E(F_x)$ and we  create $F'$ by adding all the edges leaving $G_x$ for a variable $x \in X$ with $\phi(x)=TRUE$ to $\bigcup_{x \in X} E(F'_x)$.

For an illustration, see Figure \ref{redu_5}.
\begin{figure}[h!]
    \centering
        \includegraphics[width=.6\textwidth]{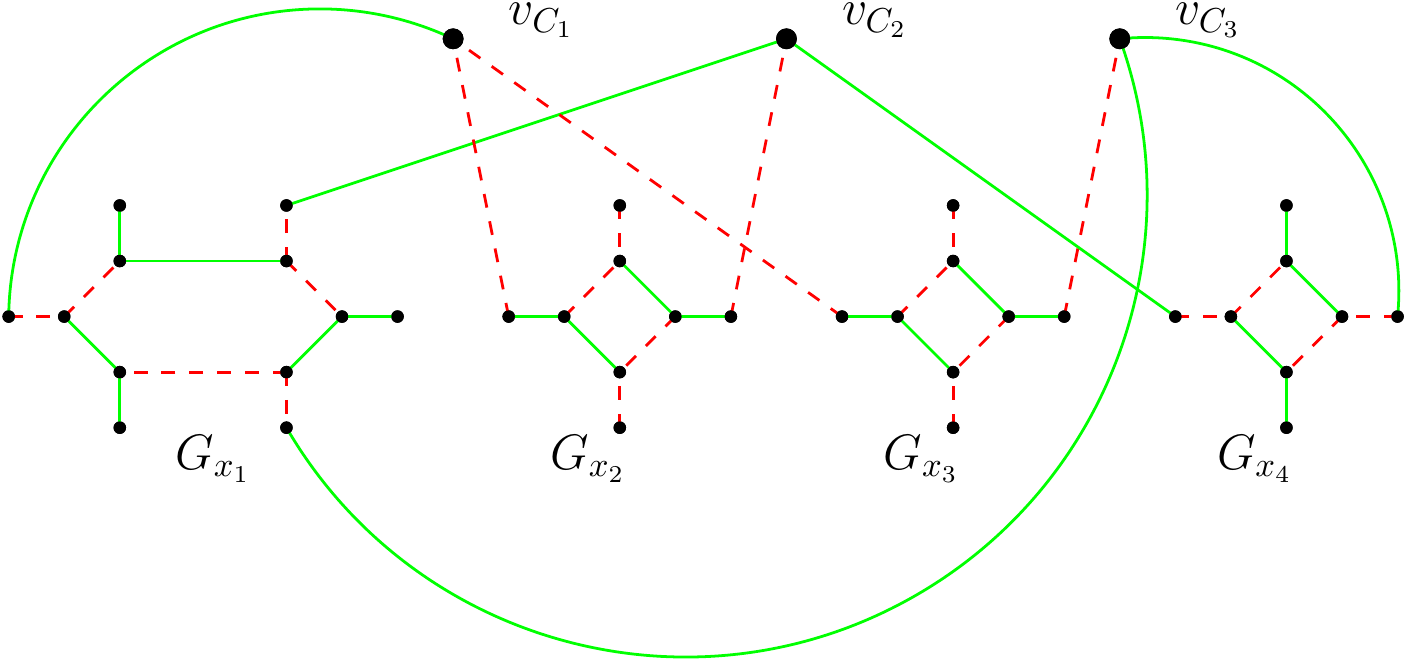}
        \caption{An example for the construction of $(F,F')$ for the instance described in Figure \ref{redu_4} together with the satisfying assignment $\phi$ defined by $\phi(x_1)=\phi(x_4)=TRUE$ and $\phi(x_2)=\phi(x_3)=FALSE$.}\label{redu_5}
\end{figure}

Observe that every component of $F$ or $F'$ is either completely contained in $G_x$ for some $x \in X$ or its edge set is disjoint from $E(G_x)$ for all $x \in X$. Further observe that, as $\phi$ satisfies $(X,\mathcal{C})$, we have that $v_C$ is incident to at least one edge of $F$ and at least one edge of $F'$ for all $C \in \mathcal{C}$. We hence obtain that every component of $F$ or $F'$ is either a component of $F_x$ or $F'_x$ for some $x \in X$ or is a path of length at most 2. As $(F_x,F'_x)$ is a $(k,\ell)$-bounded linear forest decomposition of $G_x$ for all $x \in X$ and $k,\ell \geq 2$, we obtain that $(F,F')$ is a $(k,\ell)$-bounded linear forest decomposition of $G$. Hence $G$ is a yes-instance of $(k,\ell)$-BLFD.

This finishes the proof.
\end{proof}

Finally observe that Lemmas \ref{hard1},\ref{hard2} and \ref{hard3} together with Theorem \ref{inftyinfty} imply Theorem \ref{mainhard}.

\section{Planar graphs of girth 9}\label{gir9}
This section is dedicated to proving Theorem \ref{girth9} using the discharging method. In Section \ref{premgir9}, we give some preliminary results. In Section \ref{propmincou}, we give some structural properties of a minimum counterexample applying the results from Section \ref{premgir9}. Finally, in Section \ref{disch}, we proceed to the discharging procedure which shows no minimum counterexample exists.
\subsection{Preliminaries}\label{premgir9}
For a graph $G$ and $u,v \in V(G)$, we use $dist_G(u,v)$ to denote the number of edges in a shortest $uv$-path in $G$. For a non-degenerate  face $F$ (i.e. a face which is bounded by a cycle) of an embedding of a planar graph $G$, we use $V(F)$ for the vertices of $G$ which are incident to $F$ and $E(F)$ for the edges on the boundary of $F$. For an embedding $G$ of a planar graph, we let $F(G)$ denote the set of faces of the embedding, and for $F \in F(G)$, we let $|F|$ denote the length of the boundary walk of $F$. We say that an ordering of $V(F)$ is {\it canonical} if it follows the order in which the vertices appear on the boundary of the face. Observe that there are $2|V(F)|$ canonical orderings.

\begin{Proposition}\label{wege}
Let $G$ be a planar graph, $F$ a non-degenerate face of an embedding of $G$ and $v_1,\ldots,v_4 \in V(F)$ that appear in this order in a canonical ordering of $V(F)$ such that $\max\{dist_{G-E(F)}(v_1,v_3),dist_{G-E(F)}(v_2,v_4)\} \leq 6$. Then $\min\{dist_{G-E(F)}(v_1,v_2),dist_{G-E(F)}(v_3,v_4)\} \leq 6$.
\end{Proposition}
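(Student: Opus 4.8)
The statement is a purely planar/topological fact about four vertices appearing in cyclic order on the boundary of a face $F$, where distances are measured in the graph $G-E(F)$ obtained by deleting the boundary edges of $F$. The key geometric intuition is that a short path between $v_1$ and $v_3$ and a short path between $v_2$ and $v_4$ must \emph{cross}, since $v_1,v_2,v_3,v_4$ alternate around the face; the crossing vertex can then be used to reroute and produce a short path between one of the ``consecutive'' pairs $\{v_1,v_2\}$ or $\{v_3,v_4\}$. The plan is to make this crossing argument precise and then bound the length of the rerouted path.

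\textbf{Key steps.} First I would fix shortest paths $P_{13}$ from $v_1$ to $v_3$ and $P_{24}$ from $v_2$ to $v_4$ in $G-E(F)$, each of length at most $6$ by hypothesis. I would consider the planar embedding of $G$ and note that the four vertices lie on the boundary cycle of $F$ in the cyclic order $v_1,v_2,v_3,v_4$. Deleting the open face $F$ together with its boundary edges leaves the rest of the embedding on one side; the two arcs of the face boundary, together with the exterior region, force any $v_1v_3$-curve and any $v_2v_4$-curve drawn in the complement of $F$ to intersect. Since $P_{13}$ and $P_{24}$ are drawn in $G-E(F)$ and the endpoints interleave on $\partial F$, by the Jordan curve theorem the two paths must share a vertex, say $z$. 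Then concatenating the $v_1$-to-$z$ subpath of $P_{13}$ with the $z$-to-$v_2$ subpath of $P_{24}$ yields a $v_1v_2$-walk in $G-E(F)$, and similarly a $v_3v_4$-walk; I would argue that the shorter of these two is short enough.

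\textbf{The length bound.} Let $z$ be a common vertex and write $a_1 = dist(v_1,z)$ along $P_{13}$, $a_3 = dist(z,v_3)$ along $P_{13}$ (so $a_1+a_3 \le 6$), and $b_2 = dist(v_2,z)$, $b_4 = dist(z,v_4)$ along $P_{24}$ (so $b_2+b_4 \le 6$). The $v_1v_2$-walk has length at most $a_1 + b_2$ and the $v_3v_4$-walk has length at most $a_3 + b_4$. Their sum is $(a_1+b_2)+(a_3+b_4) = (a_1+a_3)+(b_2+b_4) \le 12$, so the minimum of the two is at most $6$. Since a walk of length at most $6$ in $G-E(F)$ certifies that the corresponding distance is at most $6$, this gives $\min\{dist_{G-E(F)}(v_1,v_2), dist_{G-E(F)}(v_3,v_4)\} \le 6$, as required.

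\textbf{The main obstacle.} The delicate point is justifying rigorously that $P_{13}$ and $P_{24}$ must intersect in a \emph{vertex} (not merely that their drawings cross at a point), and that this crossing argument is valid in $G - E(F)$ rather than in $G$. The subtlety is that after deleting $E(F)$ the graph need not remain $2$-connected or even connected in a convenient way, so I would phrase the topological argument carefully: $v_1,v_2,v_3,v_4$ are cofacial, so I can draw an auxiliary arc through the interior of $F$ from $v_1$ to $v_3$ and another from $v_2$ to $v_4$; these interior arcs cross once inside $F$ by interleaving, and combined with $P_{13},P_{24}$ they would form two closed curves that, by the Jordan curve theorem applied in the sphere, cannot be disjoint unless the graph paths already share a vertex. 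I would need to handle the degenerate possibilities (a path passing through a $v_i$, or the two paths sharing an endpoint) as easy special cases, but these only make the desired distance smaller, so they do not threaten the bound.
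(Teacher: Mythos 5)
Your proposal is correct and follows essentially the same route as the paper: pick shortest $v_1v_3$- and $v_2v_4$-paths in $G-E(F)$, use planarity and the interleaving of the endpoints on $\partial F$ to find a common vertex, and then split the sum $\le 12$ of the two path lengths at that vertex to bound the minimum of $dist_{G-E(F)}(v_1,v_2)$ and $dist_{G-E(F)}(v_3,v_4)$ by $6$. The paper simply asserts the existence of the common vertex from planarity, whereas you elaborate the Jordan-curve justification; the quantitative core is identical.
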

\begin{proof}
Let $P_1$ be a shortest $v_1v_3$-path in $G-E(F)$ and  $P_2$ be a shortest $v_2v_4$-path in $G-E(F)$. As $G$ is planar, there is some $x \in V(P_1)\cap V(P_2)$. We obtain 
\begin{align*}
6+6 &\geq dist_{G-E(F)}(v_1,v_3)+dist_{G-E(F)}(v_2,v_4)\\
&= dist_{P_1}(v_1,x)+dist_{P_1}(v_3,x)+dist_{P_2}(v_2,x)+dist_{P_2}(v_4,x)\\
&= (dist_{P_1}(v_1,x)+dist_{P_2}(v_2,x))+(dist_{P_1}(v_3,x)+dist_{P_2}(v_4,x))\\
&\geq dist_{G-E(F)}(v_1,v_2)+dist_{G-E(F)}(v_3,v_4),\\
\end{align*}
so the statement follows.
\end{proof}
The following is an immediate consequence of the definition of linear forests.
\begin{Proposition}\label{3paths}
Let $L$ be a linear forest, $v_1,v_2,v_3 \in V(L)$ such that $d_L(v_i)=1$ for $i=1,2,3$ and let $P_i$ be the connected component of $L$ containing $v_i$. Then at least one of $P_1\neq P_2$ and $P_2 \neq P_3$ holds.
\end{Proposition}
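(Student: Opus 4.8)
The plan is to argue by contradiction against the negation of the conclusion. The statement ``at least one of $P_1 \neq P_2$ and $P_2 \neq P_3$ holds'' fails precisely when $P_1 = P_2$ and $P_2 = P_3$, that is, when $P_1 = P_2 = P_3$. So I would assume this common value, call it $P$, and derive a contradiction from the hypothesis that each $v_i$ has degree $1$ in $L$.

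First I would invoke the definition of a linear forest: $L$ is a vertex-disjoint union of paths, so every connected component of $L$, and in particular $P$, is a path. The single structural fact I then need is that a path has at most two vertices of degree $1$, namely its two endpoints. Since $d_L(v_i) = 1$ for $i = 1,2,3$ and all three vertices lie in $P$, each $v_i$ is an endpoint of $P$; as they are three distinct vertices, $P$ would have at least three endpoints, contradicting the bound of two. This shows $P_1 = P_2 = P_3$ is impossible, which is exactly the desired conclusion.

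The only subtlety, rather than a genuine obstacle, is the implicit assumption that $v_1, v_2, v_3$ are pairwise distinct; without it the claim is false (taking all three equal makes $P_1 = P_2 = P_3$). Since the proposition is meant to be applied to three distinct endpoints arising in the discharging argument, this distinctness is part of the intended hypothesis, and with it the argument is immediate from the defining property of linear forests, matching the paper's remark that the statement is a direct consequence of that definition.
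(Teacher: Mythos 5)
Your argument is correct and is precisely the ``immediate consequence of the definition'' that the paper has in mind (the paper gives no written proof): each $v_i$ is a degree-$1$ vertex, hence an endpoint, of its path component, and a path has at most two such endpoints, so three distinct $v_i$ cannot share one component. Your observation that pairwise distinctness of the $v_i$ must be read into the hypothesis is a fair and accurate caveat, and it is indeed satisfied in every application in the paper.
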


The following so-called ``face-charging" inequality is well known (see \cite{CRANSTON2017766} Proposition 3.1):

\begin{Proposition}
\label{facechargingobs}
For any embedding $G_0$ of a planar graph $G$, the following equality holds
\[\sum_{f \in F(G_0)} (|f| -6) + \sum_{v \in V(G)} (2d(v) -6) = -12.\]
\end{Proposition}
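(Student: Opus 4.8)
The final statement to prove is Proposition \ref{facechargingobs}, the face-charging identity
\[\sum_{f \in F(G_0)} (|f| -6) + \sum_{v \in V(G)} (2d(v) -6) = -12.\]

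My plan is to derive this directly from Euler's formula $|V(G)| - |E(G)| + |F(G_0)| = 2$ together with two standard double-counting identities. First I would record the handshake identity $\sum_{v \in V(G)} d(v) = 2|E(G)|$, and the analogous face-degree sum $\sum_{f \in F(G_0)} |f| = 2|E(G)|$. The second identity holds because every edge lies on the boundary walk of exactly two faces (a bridge being traversed twice by the boundary walk of the single face it borders, which is precisely why we count boundary-walk length $|f|$ rather than distinct incident vertices); this is where I would be slightly careful, since the paper allows parallel edges, but the boundary-walk counting still assigns each edge-side to one face and hence each edge contributes exactly $2$ to the total.

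With these in hand, the computation is routine. I would expand the left-hand side by splitting each sum:
\[\sum_{f} (|f|-6) + \sum_{v} (2d(v)-6) = \left(\sum_f |f|\right) - 6|F(G_0)| + 2\left(\sum_v d(v)\right) - 6|V(G)|.\]
Substituting $\sum_f |f| = 2|E(G)|$ and $\sum_v d(v) = 2|E(G)|$ gives
\[2|E(G)| - 6|F(G_0)| + 4|E(G)| - 6|V(G)| = 6|E(G)| - 6|V(G)| - 6|F(G_0)|.\]
Factoring out $-6$ yields $-6\bigl(|V(G)| - |E(G)| + |F(G_0)|\bigr)$, and Euler's formula replaces the parenthesized quantity by $2$, giving $-12$ as required.

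The only genuine subtlety — and the step I would flag as the main obstacle — is justifying $\sum_f |f| = 2|E(G)|$ in the presence of bridges and parallel edges, since $|f|$ denotes boundary-walk length rather than the number of distinct incident edges or vertices. The plan is to treat this as the standard fact that each face boundary is a closed walk and each edge is traversed exactly twice across all such walks, which holds for any connected plane graph (and one reduces to the connected case, or applies the cited Proposition 3.1 of \cite{CRANSTON2017766} verbatim). Everything else is a one-line algebraic substitution, so I would keep the write-up to the three displayed equalities above and a single sentence invoking Euler's formula.
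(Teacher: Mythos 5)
Your derivation is correct, and it is the standard one: expand both sums, substitute the vertex handshake identity $\sum_v d(v)=2|E(G)|$ and the face analogue $\sum_f |f|=2|E(G)|$ (counting boundary-walk length so that bridges are traversed twice), factor out $-6$, and apply Euler's formula. The paper itself gives no proof of this proposition --- it is stated as well known with a citation to Proposition 3.1 of \cite{CRANSTON2017766} --- so there is nothing to compare against beyond noting that your argument is exactly the expected justification, and you have correctly identified the one point needing care, namely that $|f|$ is boundary-walk length rather than the number of distinct incident edges.

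One small caveat worth a sentence in a write-up: Euler's formula in the form $|V|-|E|+|F|=2$ requires $G$ to be connected, and for a plane graph with $c\ge 2$ components the left-hand side of the proposition equals $-6(1+c)<-12$, so the stated \emph{equality} is false for disconnected graphs. Your remark that ``one reduces to the connected case'' therefore does not rescue the equality as stated; what is true in general is the inequality $\le -12$, which is all the discharging argument in Section \ref{gir9} actually uses (and the minimal counterexample there is connected in any case). If you present the proposition as an identity, you should either add a connectivity hypothesis or weaken it to an inequality.
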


\subsection{Structural properties of a minimum counterexample}\label{propmincou}
In this section, we collect some properties of a minimum counterexample to Theorem \ref{girth9}.

Let $G$ be a counterexample to Theorem \ref{girth9} with $v(G) + e(G)$ minimized. We fix an embedding of $G$ and say $F$ is a face of $G$ or $F \in F(G)$ if it is a face of that embedding. We use the notation that a vertex $v \in V(G)$ is a {\it $k$-vertex} if it has degree $k$ in $G$.

\begin{Claim}\label{no1}
The graph $G$ contains no vertex of degree $1$.
\end{Claim}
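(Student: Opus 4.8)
The plan is to argue by minimality: suppose for contradiction that $G$ contains a vertex $v$ of degree $1$, and produce a smaller counterexample, contradicting the choice of $G$. First I would let $u$ be the unique neighbour of $v$ and delete $v$ from $G$, obtaining a graph $G' = G - v$. Since $G'$ is a subgraph of $G$, it is again subcubic, planar, and has girth at least $9$ (deleting a vertex cannot create new cycles, and in fact $v$ being degree $1$ lies on no cycle at all). Crucially, $v(G') + e(G') < v(G) + e(G)$, so by minimality of $G$ the graph $G'$ is \emph{not} a counterexample, i.e.\ $G'$ admits a decomposition into a linear forest $L$ and a matching $M$.

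The main task is then to extend this decomposition of $G'$ back to all of $G$ by deciding where to place the single extra edge $uv$. The natural approach is to put $uv$ into the matching $M$ whenever possible, and otherwise into the linear forest $L$. Concretely, I would consider the degree of $u$ in $M$ within the decomposition of $G'$: if $u$ is incident to no edge of $M$, then adding $uv$ to $M$ keeps $M$ a matching (since $v$ is a new leaf incident to nothing else), and $L$ is unchanged, so we get a valid decomposition of $G$. If $u$ is already covered by $M$, then I would instead add $uv$ to $L$; because $d_G(u) \le 3$ and one of $u$'s edges already lies in $M$, vertex $u$ has at most two edges in $G$ outside $M$, hence at most one edge of $L$ is currently incident to $u$ in $G'$, so $u$ has degree at most $1$ in $L$. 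Adding the pendant edge $uv$ therefore raises $u$'s degree in $L$ to at most $2$, and $v$ becomes a new degree-$1$ vertex; since $v$ has no other incident edges, no cycle is created and $L$ remains a linear forest. In either case we obtain a decomposition of $G$ into a linear forest and a matching, contradicting that $G$ is a counterexample.

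The step I expect to require the most care is the verification that adding $uv$ to $L$ in the second case genuinely preserves the linear-forest property, namely that $u$ does not already have degree $2$ in $L$. This is where the subcubic hypothesis $d_G(u) \le 3$ is essential: I would want to state explicitly that at least one of $u$'s (at most three) incident edges lies in $M$, leaving at most two for $L$, and that one of these two is the new edge $uv$, so at most one pre-existing edge of $L$ meets $u$. A degenerate subcase worth mentioning is when $u$ itself had degree $1$ in $G'$ (so $d_G(u)=2$); there the argument only gets easier. Since the entire construction is a single-vertex deletion and reinsertion, no discharging or global structure is needed, so this claim should follow quickly and serve as a clean base case for the structural analysis that follows.
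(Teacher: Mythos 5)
Your proof is correct and follows essentially the same approach as the paper: delete the degree-one vertex, invoke minimality to decompose $G-v$, and reinsert the pendant edge $uv$ into whichever part has room at $u$. The only cosmetic difference is that you case-split on whether $u$ is covered by the matching while the paper splits on $d_{F_\infty}(u)$; the two are equivalent given the subcubic bound.
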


\begin{proof}
Suppose otherwise, so there is some $v \in V(G)$ with $d_G(v)=1$. Let $u$ be the neighbour of $v$ in $G$ and let $G'=G-v$. As $G'$ is smaller than $G$, there is a $(\infty,1)$-bounded linear forest decomposition $(F_\infty,F_1)$ of $G'$. If $d_{F_\infty}(u)\leq 1$, let $F'_\infty$ be defined by $E(F'_\infty)=E(F_\infty)\cup uv$ and let $F'_1=F_1$. If $d_{F_\infty}(u)=2$, let $F'_1$ be defined by $E(F'_1)=E(F_1)\cup uv$ and $F'_\infty=F_\infty$. In either case, it follows that $(F'_\infty,F'_1)$ is a $(\infty,1)$-bounded linear forest decomposition of $G$, a contradiction to $G$ being a counterexample.

\end{proof}

\begin{Lemma}\label{ben}
The graph $G$ does not contain two adjacent $2$-vertices.
\end{Lemma}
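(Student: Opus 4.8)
The plan is to use the standard reducibility technique for a minimum counterexample: assume $G$ contains two adjacent $2$-vertices, pass to a smaller graph by deleting the edge joining them, invoke minimality to decompose the smaller graph, and then extend the decomposition back to $G$ to reach a contradiction.

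Concretely, suppose $u$ and $v$ are adjacent $2$-vertices. Let $a$ be the neighbour of $u$ other than $v$, and let $b$ be the neighbour of $v$ other than $u$; note that $a\neq b$, since otherwise $u,v,a$ would form a cycle of length $3$, contradicting that $G$ has girth at least $9$. I would then set $G'=G-uv$. As a subgraph of $G$ this graph is planar and subcubic, and every cycle of $G'$ is a cycle of $G$, so the girth of $G'$ is at least $9$ as well. Since $v(G')+e(G')=v(G)+e(G)-1$, the minimality of $G$ gives a decomposition $(F_\infty,F_1)$ of $G'$ into a linear forest and a matching. In $G'$ both $u$ and $v$ have degree $1$, their unique incident edges being $ua$ and $vb$ respectively.

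To recover a decomposition of $G$, I would add $uv$ to whichever of $F_\infty$, $F_1$ preserves both required properties, splitting into cases according to where $ua$ and $vb$ lie. If $ua,vb\in E(F_1)$, then $u$ and $v$ have $F_\infty$-degree $0$, so adding $uv$ to $F_\infty$ creates the isolated path $uv$ and leaves a linear forest. If exactly one of $ua,vb$, say $vb$, lies in $E(F_\infty)$ and the other in $E(F_1)$, then $u$ is isolated in $F_\infty$, so adding $uv$ to $F_\infty$ merely appends the leaf $u$ to the component of $v$; this keeps $F_\infty$ a linear forest, since no cycle can close through a previously isolated vertex, while $F_1$ stays a matching. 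Finally, if $ua,vb\in E(F_\infty)$, then both $u$ and $v$ are unmatched in $F_1$, so adding $uv$ to $F_1$ keeps it a matching. In every case we obtain a decomposition of $G$ into a linear forest and a matching, contradicting that $G$ is a counterexample.

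The only point requiring care, and hence the main (though mild) obstacle, is the possibility that adding $uv$ to $F_\infty$ closes a cycle. This can occur only if $u$ and $v$ already lie in a common component of $F_\infty$, which forces both $ua$ and $vb$ into $E(F_\infty)$; but that is precisely the case in which both endpoints are unmatched in $F_1$, so there $uv$ is routed into the matching instead. Thus the three cases are exhaustive and the extension always succeeds. Notably, the large girth hypothesis is used only to exclude the degenerate configuration $a=b$.
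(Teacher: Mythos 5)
Your proposal is correct and follows essentially the same route as the paper: delete the edge $uv$, apply minimality to get a decomposition of $G-uv$, and then add $uv$ to $F_\infty$ when at most one of $u,v$ is covered by $F_\infty$ and to $F_1$ otherwise. Your three-way case split is exactly the paper's two-way split on $d_{F_\infty}(u)+d_{F_\infty}(v)$, and the observation that $a\neq b$ is a harmless extra (the argument does not actually need it).
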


\begin{proof}
Suppose otherwise, so there is an edge $e=uv\in E(G)$ with $d_G(u)=d_G(v)=2$. Let $G'=G-uv$. As $G'$ is smaller than $G$, there is a $(\infty,1)$-bounded linear forest decomposition $(F_\infty,F_1)$ of $G'$. If $d_{F_\infty}(u)+d_{F_\infty}(v)\leq 1$, let $F'_\infty$ be defined by $E(F'_\infty)=E(F_\infty)\cup uv$ and let $F'_1=F_1$. If $d_{F_\infty}(u)+d_{F_\infty}(v)=2$, let $F'_1$ be defined by $E(F'_1)=E(F_1)\cup uv$ and $F'_\infty=F_\infty$. In either case, it follows that $(F'_\infty,F'_1)$ is a $(\infty,1)$-bounded linear forest decomposition of $G$, a contradiction to $G$ being a counterexample.

\end{proof}

\begin{Lemma}\label{10e1}
Let $F$ be a non-degenerate face of size 10 of $G$ and $v_1,\ldots,v_{10}$ be a canonical ordering of $V(F)$ such that $v_i$ is a 3-vertex for $i \in \{1,3,5,7,9\}$ and $v_i$ is a 2-vertex for $i \in \{2,4,6,8,10\}$. Further, for $i \in \{1,3,5,7,9\}$, let $u_i$ be the unique neighbour of $v_i$ in $V(G)-V(F)$. Then for any $(\infty,1)$-bounded linear forest decomposition $(F_{\infty},F_1 )$ of $G-E(F)$, we have $\{u_1v_1,u_3v_3,u_5v_5,u_7v_7,u_9,v_9\} \\ \subseteq E(F_1)$. 
\end{Lemma}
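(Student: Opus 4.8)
The plan is to argue by contradiction using the minimality of $G$. First I would record the local structure of $G-E(F)$. Each $v_i$ with $i\in\{2,4,6,8,10\}$ is a $2$-vertex, and since $F$ is non-degenerate both its edges lie on the boundary cycle of $F$; hence $v_i$ becomes isolated in $G-E(F)$. Each $v_i$ with $i\in\{1,3,5,7,9\}$ is a $3$-vertex with exactly two boundary edges, so in $G-E(F)$ it becomes a leaf whose unique incident edge is the spoke $u_iv_i$. Now suppose, for contradiction, that some spoke lies in $E(F_\infty)$, and let $S\subseteq\{1,3,5,7,9\}$ be the (nonempty) set of indices $i$ with $u_iv_i\in E(F_\infty)$. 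I will extend $(F_\infty,F_1)$ to an $(\infty,1)$-bounded linear forest decomposition of the whole graph $G$, contradicting that $G$ is a counterexample to Theorem \ref{girth9}.

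Writing $e_j=v_jv_{j+1}$ (indices mod $10$) for the ten boundary edges removed in forming $G-E(F)$, the extension I propose is to place $e_i$ into the matching for every $i\in S$ and to place all remaining boundary edges into the linear forest; that is, set $F_1':=F_1\cup\{e_i:i\in S\}$ and $F_\infty':=F_\infty\cup\bigl(E(F)\setminus\{e_i:i\in S\}\bigr)$. The first thing to check is that this respects the two local constraints. Since the indices in $S$ are pairwise non-adjacent on the cycle, the edges $\{e_i:i\in S\}$ form a matching, they meet no $v_i$ with $i\notin S$, and they are disjoint from the old matching $F_1$ (which does not touch the isolated even vertices and uses no boundary edge, and which at each $v_i$ with $i\in S$ has the spoke in $F_\infty$ rather than in $F_1$); hence $F_1'$ is a matching. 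A short case check on each face vertex then shows that $F_\infty'$ has maximum degree at most $2$: at $v_i$ with $i\in S$ the spoke together with $e_{i-1}$ gives $F_\infty'$-degree exactly $2$, at $v_i$ with $i\notin S$ the two boundary edges give degree $2$, each even vertex gets degree at most $2$, and all off-face vertices are untouched.

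The main obstacle is to ensure that $F_\infty'$ contains no cycle. Here I would use the key observation that for every $i\in S$ the even vertex $v_{i+1}$ incident to the matched edge $e_i$ has its other boundary edge $e_{i+1}$ in $F_\infty'$ and no edges off the face, so $v_{i+1}$ has $F_\infty'$-degree exactly $1$. Thus every maximal arc of consecutive boundary edges lying in $F_\infty'$ carries such an even leaf at one endpoint. Since the old $F_\infty$ is a linear forest and the only face vertices incident to an off-face $F_\infty'$-edge are the leaves $v_i$ with $i\in S$ (through their spokes), every connected component of $F_\infty'$ that meets $E(F)$ contains at least one of these degree-$1$ even vertices; as $F_\infty'$ has maximum degree at most $2$, such a component is a path rather than a cycle. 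Components disjoint from $E(F)$ are unchanged components of the linear forest $F_\infty$. Hence $F_\infty'$ is acyclic of maximum degree $2$, i.e.\ a linear forest, and $(F_\infty',F_1')$ is a decomposition of $G$ into a linear forest and a matching, the desired contradiction; therefore $S=\emptyset$ and all five spokes lie in $E(F_1)$.

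I expect the acyclicity verification to be the delicate point, precisely because the girth hypothesis alone does \emph{not} forbid a long cycle assembled from a boundary arc and an off-face $F_\infty$-path joining two spokes: such a cycle could have length at least $9$ and still be present. What rules it out is not any distance bound but the forced presence of a degree-$1$ even vertex on every boundary arc, so the choice $\{e_i:i\in S\}$ of matched edges is engineered exactly to produce these leaves. One should also confirm the harmless facts that each $u_i$ lies outside $V(F)$ (so no spoke is a chord of the face cycle) and that the construction never raises the degree of any $u_i$, both of which are immediate from the hypotheses.
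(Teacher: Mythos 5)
Your proposal is correct and is essentially the paper's own proof: you perform the identical swap (put $v_iv_{i+1}$ into the matching exactly for those odd $i$ whose spoke lies in $F_\infty$, and put the remaining boundary edges into the linear forest) and derive a contradiction with $G$ having no decomposition. The only difference is that you spell out the acyclicity check via the forced degree-$1$ even vertices on each boundary arc, which the paper compresses into the single sentence that every component of $F'_\infty$ is obtained by extending a path of $F_\infty$.
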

\begin{proof}
Suppose otherwise. Let $(F_\infty',F_1')$ be defined by $E(F'_1)=E(F_1)\cup \{v_{i}v_{i+1}:i \in \{1,3,5,7,9\} \text{and } u_iv_i \in E(F_{\infty})\}$ and $E(F'_{\infty})=E(F_{\infty})\cup (E(F)-E(F'_1))$. It follows immediately from the construction that $F'_1$ is a matching. Further, every component of $F'_{\infty}$ is obtained by extending by a path of $F_{\infty}$. It follows that $F'_{\infty}$ is a linear forest, so $(F'_{\infty},F'_1)$ is a $(\infty,1)$-bounded linear forest decomposition of $G$, a contradiction. For an illustration, see Figure \ref{bild1}. 
\end{proof}

\begin{figure}[h!]
    \centering
        \includegraphics[width=.25\textwidth]{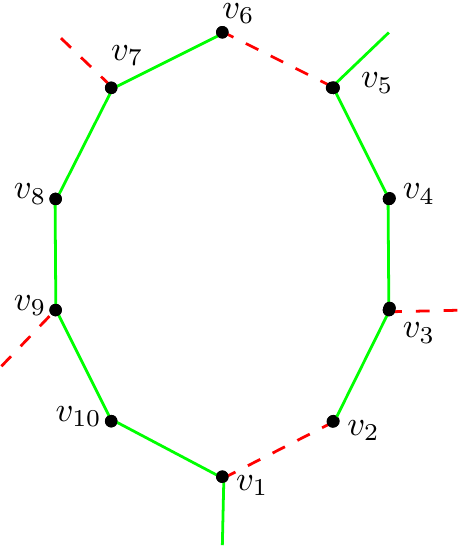}
        \caption{An illustration of the proof of Lemma \ref{10e1} where $u_3v_3,u_7v_7$ and $u_9v_9$ are included in $E(F_1)$. The dashed red edges are in $E(F'_1)$ and the solid green edges are in $E(F'_\infty)$.}\label{bild1}
\end{figure}

\begin{Lemma}\label{10dist4}
Let $F$ be a non-degenerate face of size 10 of $G$ and $v_1,\ldots,v_{10}$ be a canonical ordering of $V(F)$ such that $v_i$ is a 3-vertex for $i \in \{1,3,5,7,9\}$ and $v_i$ is a 2-vertex for $i \in \{2,4,6,8,10\}$. Then $dist_{G-E(F)}(v_1,v_5)\leq 6$.
\end{Lemma}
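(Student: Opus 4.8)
The plan is to argue by contradiction: assume $dist_{G-E(F)}(v_1,v_5)\ge 7$ and derive a decomposition of $G-E(F)$ forbidden by Lemma~\ref{10e1}. First I would record the local structure. As in Lemma~\ref{10e1}, each $v_i$ with $i\in\{1,3,5,7,9\}$ has a unique neighbour $u_i\in V(G)-V(F)$, so in $G-E(F)$ every such $v_i$ is a leaf whose only edge is the spoke $u_iv_i$, while each $2$-vertex $v_i$ ($i$ even) is isolated. Since $G-E(F)$ is a proper subgraph of $G$ it is planar, subcubic and of girth at least $9$, and smaller than $G$, so by minimality it admits a decomposition $(F_\infty,F_1)$; by Lemma~\ref{10e1} all five spokes lie in $E(F_1)$.

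Next I would isolate the obstruction. A decomposition of $G$ arises from one of $G-E(F)$ by $2$-colouring the ten face edges, and this is possible only if at least one spoke lies in $F_\infty$: if every spoke is in $F_1$, then each $3$-vertex $v_i$ is already saturated by $F_1$, forcing both incident face edges into $F_\infty$; as the $3$-vertices alternate around $F$, this drives the entire $10$-cycle into $F_\infty$, impossible for a linear forest. Consequently it suffices to modify $(F_\infty,F_1)$ into another decomposition of $G-E(F)$ in which the single spoke $u_1v_1$ has been moved into $F_\infty$, since the mere existence of such a decomposition already contradicts Lemma~\ref{10e1}.

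The heart of the matter is this rerouting. As $v_1$ is a leaf with $d_{F_\infty}(v_1)=0$, the spoke may be recoloured into $F_\infty$ for free whenever $d_{F_\infty}(u_1)\le 1$; the only obstruction is $d_{F_\infty}(u_1)=2$, in which case $u_1$ is interior to a maximal path of $F_\infty$. I would then run an alternating rotation rooted at $u_1$, namely a walk in $G-E(F)$ that alternately ejects an $F_\infty$-edge into $F_1$ and pulls an $F_1$-edge into $F_\infty$, maintaining at each step that $F_1$ is a matching. The walk terminates successfully as soon as it reaches a vertex that can absorb the change, at which point $u_1v_1$ has been freed into $F_\infty$; note that Claim~\ref{no1} forces minimum degree $2$, which constrains where the walk can stop and helps rule out degenerate terminations.

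The main obstacle, and the only place the hypothesis enters, is to show that under $dist_{G-E(F)}(v_1,v_5)\ge 7$ this rotation must terminate successfully. The rotation can fail only by being blocked, or by closing a cycle in $F_\infty$; such a cycle would lie in $G-E(F)\subseteq G$ and hence have length at least $9$, so a short cycle is excluded, and I would argue that the only blocking configuration drives the alternating walk from the $v_1$-side into the spoke region of $v_5$, thereby exhibiting a walk from $v_1$ to $v_5$ in $G-E(F)$. Tracing the parities of the alternation, I expect this walk to have length at most $6$, contradicting the assumption and pinning the constant in the statement. Making the termination analysis rigorous, namely bounding how far the rotation can propagate and certifying that it neither closes a cycle (using girth $9$) nor blocks except via a short $v_1$–$v_5$ connection, is where I anticipate spending most of the effort; once it is in place, the contradiction yields $dist_{G-E(F)}(v_1,v_5)\le 6$.
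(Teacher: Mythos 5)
Your setup is fine (in $G-E(F)$ the odd $v_i$ are leaves on their spokes $u_iv_i$, minimality gives a decomposition of $G-E(F)$, and Lemma~\ref{10e1} puts all five spokes in $E(F_1)$; also, exhibiting \emph{any} decomposition of $G-E(F)$ with a spoke in $E(F_\infty)$ does contradict Lemma~\ref{10e1}). But the heart of your argument --- the ``alternating rotation'' that frees $u_1v_1$ into $F_\infty$ --- has two genuine gaps. First, you assert that the only way the rotation can be blocked is by being driven into ``the spoke region of $v_5$'' along a walk of length at most $6$; nothing in the construction singles out $v_5$ (as opposed to $v_3$, $v_7$, $v_9$, or an arbitrary part of $G-E(F)$ far from the face), and no parity or length accounting is given that would cap the propagation at $6$ edges. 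Since the hypothesis $dist_{G-E(F)}(v_1,v_5)\geq 7$ enters \emph{only} through this unproved claim, the proof is not complete. Second, your dismissal of the cycle-closing failure mode is wrong: you argue a cycle created in $F_\infty$ would have length at least $9$ by the girth bound, but $F_\infty$ must be acyclic outright, so a cycle of length $9$ or more closing during the rotation is still a fatal failure that you have not excluded.

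The paper avoids rerouting altogether. Under the assumption $dist_{G-E(F)}(v_1,v_5)>6$, it forms the auxiliary graph $H$ from $G-E(F)$ by adding one new vertex $z$ and the three edges $v_1z,v_3z,v_5z$. The distance hypothesis (together with the paths $v_1v_2v_3$ and $v_3v_4v_5$ on the face, which lift any short cycle through $v_1zv_3$ or $v_3zv_5$ back to an equally short cycle of $G$) is exactly what guarantees $H$ still has girth at least $9$, so minimality yields a decomposition $(F_\infty,F_1)$ of $H$. If all three spokes $u_1v_1,u_3v_3,u_5v_5$ were in $E(F_1)$, the matching condition would force $v_1z,v_3z,v_5z\in E(F_\infty)$, giving $z$ degree $3$ in a linear forest. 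Hence $(F_\infty-z,F_1-z)$ is a decomposition of $G-E(F)$ with a spoke outside $E(F_1)$, contradicting Lemma~\ref{10e1}. If you want to salvage your approach you would need to supply the missing termination analysis, but the gadget argument is the intended (and much shorter) route.
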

\begin{proof}
Suppose otherwise and let $H$ be obtained from $G-E(F)$ by adding a new vertex $z$ and the 3 new edges $v_1z,v_3z$ and $v_5z$.
\begin{Claim}
The girth of $H$ is at least 9.
\end{Claim}
\begin{proof}
Suppose otherwise, so $H$ contains a cycle $C$ of length at most 8. As $G-E(F)$ is of girth at least 9, we obtain that $z \in V(C)$ and $E(C)$ contains exactly 2 of the 3 edges $v_1z,v_3z$ and $v_5z$. If $v_3z \in E(C)$, say $\{v_1z,v_3z\}\subseteq E(C)$, then the graph $C'$ which is defined by $V(C')=V(C)-z\cup v_2$ and $E(C')=E(C)-\{v_1z,v_3z\}\cup \{v_1v_2,v_2v_3\}$ is a cycle of the same length as $C$ in $G$, a contradiction to the girth of $G$ being at least 9. If $\{v_1z,v_5z\}\subseteq V(C)$, then $C-z$ is a $v_1v_5$-path of length at most 6 in $G-E(F)$, a contradiction to the assumption.
\end{proof}
For $i \in \{1,3,5,7,9\}$, let $u_i$ be the unique neighbor of $v_i$ in $V(G)-V(F)$. As $H$ is clearly planar, subcubic and smaller than $G$, we obtain that $H$ has a $(\infty,1)$-bounded linear forest decomposition $(F_\infty,F_1)$. Further, if $\{u_1v_1,u_3v_3,u_5v_5\}\subseteq E(F_1)$, as $F_1$ is a matching, we obtain that $\{v_1z,v_3z,v_5z\}\subseteq E(F_{\infty})$, a contradiction to $E(F_{\infty})$ being a linear forest. Hence $(F_{\infty}-z,F_1-z)$ is a $(\infty,1)$-bounded linear forest decomposition of $G-E(F)$ that satisfies $\{u_1v_1,u_3v_3,u_5v_5,u_7v_7,u_9,v_9\}- E(F_1) \neq \emptyset$, a contradiction to Lemma \ref{10e1}.
\end{proof}

\begin{Lemma}\label{1063}
Every $10$-face of $G$ is incident to at least six 3-vertices.
\end{Lemma}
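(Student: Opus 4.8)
The plan is to argue by contradiction: assume the $10$-face $F$ is incident to at most five $3$-vertices and derive a cycle of length at most $8$, contradicting the girth hypothesis. I would first record two preliminary reductions. By Claim~\ref{no1} the graph $G$ has minimum degree at least $2$, and since its girth is at least $9$ it is simple. I would then observe that every $10$-face is non-degenerate, i.e.\ bounded by a $10$-cycle: if the boundary walk used an edge twice, that edge would be a bridge, forcing the walk to circle a block of girth at least $9$ and hence to have length at least $11$; and if it used all ten edges distinctly but were not a single cycle, the resulting closed trail would split into at least two edge-disjoint cycles of length at least $9$ each, giving length at least $18$. Either way the length would exceed $10$, so $F$ is a $10$-cycle.

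Next I would pin down the extremal configuration. By Lemma~\ref{ben} the $2$-vertices on $F$ form an independent set on the cycle, so there are at most five of them. If $F$ had at most five $3$-vertices, it would then have exactly five $2$-vertices and five $3$-vertices, and the unique way five pairwise non-adjacent vertices sit on a $10$-cycle forces them to alternate. Choosing the canonical ordering $v_1,\dots,v_{10}$ so that the $3$-vertices occupy the odd positions, we land exactly in the hypothesis of Lemma~\ref{10dist4}.

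The heart of the argument is to combine two applications of Lemma~\ref{10dist4} with the planarity crossing inequality. Applying the lemma to the given ordering gives $dist_{G-E(F)}(v_1,v_5)\leq 6$, and applying it to the rotated (but still canonical) ordering beginning at $v_3$ gives $dist_{G-E(F)}(v_3,v_7)\leq 6$. Since $v_1,v_3,v_5,v_7$ appear in this cyclic order on $F$, Proposition~\ref{wege} yields $\min\{dist_{G-E(F)}(v_1,v_3),\,dist_{G-E(F)}(v_5,v_7)\}\leq 6$. Say $dist_{G-E(F)}(v_1,v_3)\leq 6$ and let $P$ be a shortest $v_1v_3$-path in $G-E(F)$. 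The vertex $v_2$ is a $2$-vertex whose only two edges $v_1v_2$ and $v_2v_3$ lie on $F$, so $v_2$ is isolated in $G-E(F)$ and does not meet $P$; hence $P$ together with $v_3v_2v_1$ is a cycle of length at most $8$ in $G$, contradicting girth at least $9$. This proves the lemma.

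The main obstacle is conceptual rather than computational: recognising that Lemma~\ref{10dist4} only bounds the ``diagonal'' distances between the $3$-vertices, and that the crossing inequality of Proposition~\ref{wege} is precisely the tool that converts a pair of such diagonal bounds into a bound on a ``short side'' $v_1v_3$, which is what produces the forbidden short cycle. The only bookkeeping care needed is the non-degeneracy reduction and the verification that the closing path avoids the intermediate $2$-vertex, so that one genuinely obtains a cycle rather than merely a closed walk.
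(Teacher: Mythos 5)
Your proof is correct and follows essentially the same route as the paper's: establish non-degeneracy and the alternating degree pattern via Claim~\ref{no1} and Lemma~\ref{ben}, apply Lemma~\ref{10dist4} to two rotated canonical orderings to bound $dist_{G-E(F)}(v_1,v_5)$ and $dist_{G-E(F)}(v_3,v_7)$, invoke Proposition~\ref{wege} to get a short $v_iv_{i+2}$-path, and close it through the intermediate $2$-vertex into a cycle of length at most $8$. Your explicit check that the $2$-vertex $v_{i+1}$ is isolated in $G-E(F)$ and hence avoids the path is a small detail the paper leaves implicit, but otherwise the arguments coincide.
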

\begin{proof}
Let $F$ be a face of size 10 of $G$ and suppose for the sake of a contradiction that $V(F)$ contains at most five 3-vertices. Observe that $F$ is non-degenerate as the girth of $G$ is at least 9. Further, by Lemma \ref{ben}, we obtain that there is a canonical ordering $v_1,\ldots,v_{10}$ of $V(F)$ such that $v_i$ is a 3-vertex for $i \in \{1,3,5,7,9\}$ and $v_i$ is a 2-vertex for $i \in \{2,4,6,8,10\}$. 
 By Lemma \ref{10dist4}, we obtain that $dist_{G-E(F)}(v_1,v_5)\leq 6$. By applying Lemma \ref{10dist4} to the canonical ordering $v_3,\ldots,v_{10},v_1,v_2$, we obtain that $dist_{G-E(F)}(v_3,v_7)\leq 6$. Now Proposition \ref{wege} yields that there is some $i \in \{1,5\}$ such that $dist_{G-E(F)}(v_i,v_{i+2})\leq 6$, so there is a path $v_iv_{i+2}$-path $P$ of length at most 6 in $G-E(F)$. Now let $C$ be the subgraph of $G$ with $V(C)=V(P)\cup v_{i+1}$ and $E(C)=E(P)\cup \{v_iv_{i+1},v_{i+1}v_{i+2}\}$. Then $C$ is a cycle of length at most 8 in $G$, a contradiction.
\end{proof}

\begin{Lemma}\label{9cas}
Let $F$ be a non-degenerate 9-face of $G$ and $v_1,\ldots,v_9$ a canonical ordering of $V(F)$ such that $v_i$ is a 3-vertex for $i\in \{1,2,4,6,8\}$ and $v_i$ is a 2-vertex for $i\in \{3,5,7,9\}$.  Further, for $i \in \{1,2,4,6,8\}$, let $u_i$ be the unique neighbour of $v_i$ in $V(G)-V(F)$. Then for any $(\infty,1)$-bounded linear forest decomposition $(F_\infty,F_1)$ of $G-E(F)$, one of the following holds:

\begin{enumerate}
\item $\{u_1v_1,u_2v_2,u_4v_4,u_6v_6,u_8v_8\}\subseteq E(F_1)$,
\item $\{u_4v_4,u_6v_6,u_8v_8\}\subseteq E(F_1)$ and $\{u_1v_1,u_2v_2\}\subseteq E(P)$ for some component $P$ of $F_\infty$.
\end{enumerate}
\end{Lemma}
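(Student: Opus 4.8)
The plan is to prove this by contradiction, assuming there exists an $(\infty,1)$-bounded linear forest decomposition $(F_\infty,F_1)$ of $G-E(F)$ for which neither outcome holds, and then showing that we could extend it to a decomposition of all of $G$, contradicting the minimality of $G$. The key structural observation, analogous to Lemma \ref{10e1}, is that the face $F$ has five $3$-vertices $v_1,v_2,v_4,v_6,v_8$ with external neighbours $u_i$, and four $2$-vertices $v_3,v_5,v_7,v_9$ whose only other neighbours lie on $F$. The edges $u_iv_i$ are the ``dangling'' decisions already made in $G-E(F)$, and the plan is to show that unless outcome 1 or outcome 2 holds, we can colour the face edges $E(F)$ so as to absorb the conflicts and obtain a valid decomposition of $G$, which is the desired contradiction. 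Note the asymmetry with Lemma \ref{10e1}: here $v_1$ and $v_2$ are \emph{adjacent} $3$-vertices on the face, which is exactly why the second, weaker outcome is permitted rather than forcing all five $u_iv_i$ into $F_1$.

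First I would set up the extension: for each $i\in\{1,2,4,6,8\}$ with $u_iv_i\in E(F_\infty)$, the vertex $v_i$ already has an $F_\infty$-edge incident to it, so on the cycle $F$ we are constrained to place at most one $F_\infty$-edge at $v_i$ (to keep $F_\infty$ a linear forest we cannot make $v_i$ have $F_\infty$-degree $3$), whereas if $u_iv_i\in E(F_1)$ then $v_i$ may freely take up to two $F_\infty$-edges on $F$. The $2$-vertices $v_3,v_5,v_7,v_9$ each have both their $F$-edges available and can accept at most one $F_1$-edge and any $F_\infty$-edges. The strategy is the same absorption idea as in Lemma \ref{10e1}: for each $3$-vertex $v_i$ with $u_iv_i\in E(F_\infty)$, we try to route the two incident face edges into $F_1$-and-$F_\infty$ so that $v_i$ gets no new $F_\infty$-edge, placing one of its face edges in $F_1$. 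The crux is a parity/packing argument on the $10$-cycle (here a $9$-cycle) of face edges: a matching placed in $F_1$ together with the leftover edges going to $F_\infty$ must avoid creating a long/closed path in $F_\infty$ and must respect that $F_1$ stays a matching.

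The main case analysis I expect is driven by which of the five edges $u_iv_i$ lie in $E(F_1)$ versus $E(F_\infty)$. If all five lie in $E(F_1)$ we are in outcome 1 and there is nothing to prove, so assume at least one, say $u_jv_j$, lies in $E(F_\infty)$. I would then argue that the only way the extension can fail is if the $F_\infty$-edge already incident to $v_j$ forces a face edge at $v_j$ into $F_\infty$ in a way that chains around the cycle and closes up; the adjacency of $v_1$ and $v_2$ is the single place where two consecutive face-vertices are both $3$-vertices, so if the obstruction concentrates there, the two external edges $u_1v_1,u_2v_2$ must lie on a common $F_\infty$-path — which is precisely outcome 2. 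Concretely, I would show that if $\{u_1v_1,u_2v_2\}\not\subseteq E(F_1)$ and they are not both contained in a single $F_\infty$-component, then the edge $v_1v_2$ together with the outer edges can be oriented into $F_1$ and $F_\infty$ without conflict, yielding a decomposition of $G$.

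The hard part will be the bookkeeping of the extension across the whole $9$-cycle while simultaneously preventing $F_\infty$ from acquiring a cycle (its components are paths) and keeping $F_1$ a matching; the adjacent pair $v_1v_2$ is the genuinely new difficulty compared to Lemma \ref{10e1}, because one cannot independently absorb $u_1v_1$ and $u_2v_2$ when they are joined through a short external $F_\infty$-path, and one must carefully verify that the alternative to outcome 1 is exactly the ``both in one component'' condition of outcome 2 rather than some third failure mode. I would close by invoking Proposition \ref{3paths} to control how many of the endpoints can lie in a common $F_\infty$-component, ensuring that whenever outcome 1 fails the structure collapses into the configuration described in outcome 2.
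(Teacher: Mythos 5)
Your high-level strategy coincides with the paper's: assume $(F_\infty,F_1)$ satisfies neither outcome, extend it across the nine face edges to a decomposition of all of $G$, and contradict the minimality of the counterexample; the case analysis is indeed driven by which of the five edges $u_iv_i$ lie in $E(F_1)$, Proposition \ref{3paths} is the tool that guarantees two dangling $F_\infty$-ends lie in distinct components, and the adjacency of $v_1$ and $v_2$ is correctly identified as the sole source of outcome~(ii). So there is no wrong idea here.

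The genuine gap is that the substance of the proof --- the verification that an extension exists in every configuration outside outcomes (i) and (ii) --- is asserted rather than carried out. Your sentence ``the only way the extension can fail is if the obstruction concentrates at $v_1v_2$'' is essentially a restatement of the lemma, and ruling out a ``third failure mode'' is exactly the work that remains. The paper does this by splitting on $|\{u_1v_1,u_2v_2,u_4v_4,u_6v_6,u_8v_8\}\cap E(F_1)|\in\{0,1,2,3,\geq 4\}$, then (up to the reflection symmetry of the face through $v_1v_2$) on which particular subset occurs, yielding roughly fourteen configurations; for each one it writes down an explicit partition of the nine face edges into $E(F_1')$ and $E(F_\infty')$ and checks that $F_1'$ is a matching and $F_\infty'$ is acyclic with maximum degree two. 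In several of these configurations (e.g.\ when none or only one of the $u_iv_i$ lies in $E(F_1)$) the construction to use is not canonical: one must first apply Proposition \ref{3paths} to a specific triple among $\{u_1v_1,u_2v_2,u_4v_4,u_6v_6,u_8v_8\}\cap E(F_\infty)$ to learn which pair of components differ, and then choose between two different edge-colourings of the face accordingly, since attaching the face path to two ends of the \emph{same} $F_\infty$-component would close a cycle. None of these choices is exhibited in your proposal, and there is no uniform rule that produces them; a complete proof must supply them case by case, as the paper does in Cases 1--5 and their subcases.
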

\begin{proof}
Let $(F_\infty,F_1)$ be a $(\infty,1)$-bounded linear forest decomposition of $G-E(F)$. We distinguish several cases.

\begin{case}\label{e10}
$\{u_1v_1,u_2v_2,u_4v_4,u_6v_6,u_8v_8\}\cap  E(F_1) = \emptyset$.
\end{case}
\begin{proof}
For $i \in \{1,2,4,6,8\}$, let $P_i$ be the connected component of $F_\infty$ that contains the edge $u_iv_i$. By Proposition \ref{3paths}, we obtain that one of $P_4 \neq P_6$ and  $P_6 \neq P_8$ holds, By symmetry, we may suppose that $P_4 \neq P_6$. Let $(F'_\infty,F_1)$ be defined by $E(F'_1)=E(F_1)\cup \{v_1v_2,v_3v_4,v_6v_7,v_8v_9\}$ and  $E(F'_\infty)=E(F_\infty)\cup \{v_2v_3,v_4v_5,v_5v_6,v_7v_8,v_9v_1\}$. It follows by construction that $F_1'$ is a matching, and since $P_4 \neq P_6$, that $F'_\infty$ is a linear forest. Hence $(F'_\infty,F'_1)$ is a $(\infty,1)$-bounded linear forest decomposition of $G$, a contradiction. For an illustration, see Figure \ref{bild2}.
\end{proof}

\begin{figure}[h!]
    \centering
        \includegraphics[width=.25\textwidth]{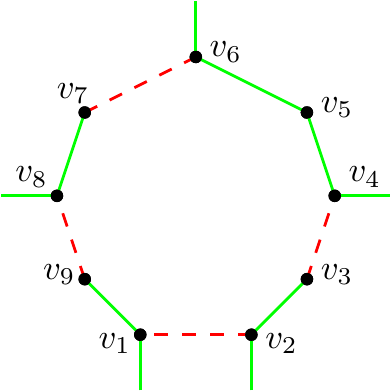}
        \caption{An illustration of the proof of Case \ref{e10}. The dashed red edges are in $E(F'_1)$ and the solid green edges are in $E(F'_\infty)$.}\label{bild2}
\end{figure}
\begin{case}
$|\{u_1v_1,u_2v_2,u_4v_4,u_6v_6,u_8v_8\}\cap  E(F_1)| = 1$.
\end{case}
\begin{proof}
By symmetry, we may suppose that the unique edge in $\{u_1v_1,u_2v_2,u_4v_4,u_6v_6,u_8v_8\}\cap  E(F_1)$ is one of $u_2v_2,u_4v_4$ and $u_6v_6$.
\begin{Subcase}\label{e111}
$\{u_1v_1,u_2v_2,u_4v_4,u_6v_6,u_8v_8\}\cap  E(F_1) = \{u_2v_2\}$.
\end{Subcase}
\begin{proof}
Let $(F'_\infty,F_1')$ be defined by $E(F'_1)=E(F_1)\cup \{v_3v_4,v_5v_6,v_7v_8,v_9v_1\}$ and  $E(F'_\infty)=E(F_\infty)\cup \{v_1v_2,v_2v_3,v_4v_5,v_6v_7,v_8v_9\}$. It follows by construction that $E(F_1')$ is a matching and that $E(F'_\infty)$ is a linear forest. Hence $(F'_\infty,F'_1)$ is a $(\infty,1)$-bounded linear forest decomposition of $G$, a contradiction. For an illustration, see Figure \ref{bild3}.
\end{proof}

\begin{figure}[h!]
    \centering
        \includegraphics[width=.25\textwidth]{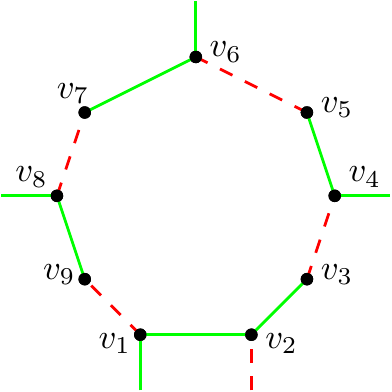}
        \caption{An illustration of the proof of Subcase \ref{e111}. The dashed red edges are in $E(F'_1)$ and the solid green edges are in $E(F'_\infty)$.}\label{bild3}
\end{figure}

\begin{Subcase}\label{e112}
$\{u_1v_1,u_2v_2,u_4v_4,u_6v_6,u_8v_8\}\cap  E(F_1) = \{u_4v_4\}$.
\end{Subcase}
\begin{proof}
For $i \in \{1,2,6,8\}$, let $P_i$ be the connected component of $E(F_\infty)$ that contains the edge $u_iv_i$. By Proposition \ref{3paths}, we obtain that one of $P_1 \neq P_2$ and  $P_2 \neq P_6$ holds.

First, suppose that $P_1 \neq P_2$ holds. Let $(F'_\infty,F'_1)$ be the decomposition of $G$ defined by $E(F'_1)=E(F_1)\cup \{v_2v_3,v_5v_6,v_7v_8,v_9v_1\}$ and  $E(F'_\infty)=E(F_\infty)\cup \{v_1v_2,v_3v_4,v_4v_5,v_6v_7,v_8v_9\}$. It follows by construction that $F_1'$ is a matching and since $P_1 \neq P_2$ that $F'_\infty$ is a linear forest. Hence $(F'_\infty,F'_1)$ is a $(\infty,1)$-bounded linear forest decomposition of $G$, a contradiction to $G$ being a counterexample. For an illustration, see Figure \ref{bild4}.

\begin{figure}[h!]
    \centering
        \includegraphics[width=.25\textwidth]{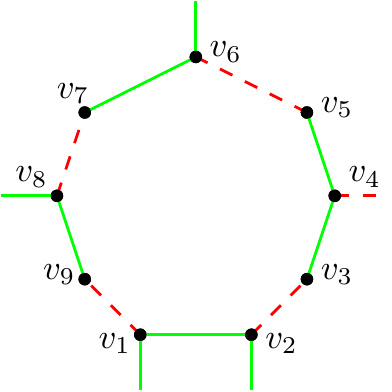}
        \caption{An illustration of the first part of the proof of Subcase \ref{e112}. The dashed red edges are in $E(F'_1)$ and the solid green edges are in $E(F'_\infty)$.}\label{bild4}
\end{figure}

Now suppose that $P_2 \neq P_6$ holds. Let $(F'_\infty,F'_1)$ be the decomposition of $G$ defined by $E(F'_1)=E(F_1)\cup \{v_1v_2,v_6v_7,v_8v_9\}$ and  $E(F'_\infty)=E(F_\infty)\cup \{v_2v_3,v_3v_4,v_4v_5,v_5v_6,v_7v_8,v_9v_1\}$. It follows by construction that $F_1'$ is a matching and since $P_2 \neq P_6$ that $F'_\infty$ is a linear forest. Hence $(F'_\infty,F'_1)$ is a $(\infty,1)$-bounded linear forest decomposition of $G$, a contradiction to $G$ being a counterexample. For an illustration, see Figure \ref{bild5}.

\end{proof}

\begin{figure}[h!]
    \centering
        \includegraphics[width=.25\textwidth]{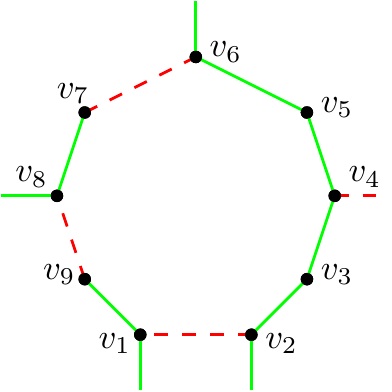}
        \caption{An illustration of the  second part of the proof of Subcase \ref{e112}. The dashed red edges are in $E(F'_1)$ and the solid green edges are in $E(F'_\infty)$.}\label{bild5}
\end{figure}
\begin{Subcase}\label{e113}
$\{u_1v_1,u_2v_2,u_4v_4,u_6v_6,u_8v_8\}\cap  E(F_1) = \{u_6v_6\}$.
\end{Subcase}
\begin{proof}
For $i \in \{1,2,4,8\}$, let $P_i$ be the connected component of $F_\infty$ that contains the edge $u_iv_i$. By Proposition \ref{3paths}, we obtain that one of $P_1 \neq P_2$ and  $P_2 \neq P_4$ holds.

First, suppose that $P_1 \neq P_2$ holds. Let $(F'_\infty,F'_1)$ be the decomposition of $G$ defined by $E(F'_1)=E(F_1)\cup \{v_2v_3,v_4v_5,v_7v_8,v_9v_1\}$ and  $E(F'_\infty)=E(F_\infty)\cup \{v_1v_2,v_3v_4,v_5v_6,v_6v_7,v_8v_9\}$. It follows by construction that $E(F_1')$ is a matching and since  $P_1 \neq P_2$ that $F'_\infty$ is a linear forest. Hence $(F'_\infty,F'_1)$ is a $(\infty,1)$-bounded linear forest decomposition of $G$, a contradiction to $G$ being a counterexample. For an illustration, see Figure \ref{bild6}.

\begin{figure}[h!]
    \centering
        \includegraphics[width=.25\textwidth]{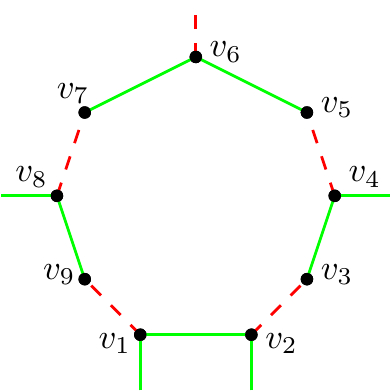}
        \caption{An illustration of the first part of the proof of Subcase \ref{e113}. The dashed red edges are in $E(F'_1)$ and the solid green edges are in $E(F'_\infty)$.}\label{bild6}
\end{figure}

Now suppose that $P_2 \neq P_4$ holds. Let $(F'_\infty,F'_1)$ be the decomposition of $G$ defined by $E(F'_1)=E(F_1)\cup \{v_1v_2,v_4v_5,v_8v_9\}$ and  $E(F'_\infty)=E(F_\infty)\cup \{v_2v_3,v_3v_4,v_5v_6,v_6v_7, v_7v_8,v_9v_1\}$. It follows by construction that $F_1'$ is a matching and since $P_2 \neq P_4$ that $F'_\infty$ is a linear forest. Hence $(F'_\infty,F'_1)$ is a $(\infty,1)$-bounded linear forest decomposition of $G$, a contradiction to $G$ being a counterexample. For an illustration, see Figure \ref{bild7}.
\end{proof}
\begin{figure}[h!]
    \centering
        \includegraphics[width=.25\textwidth]{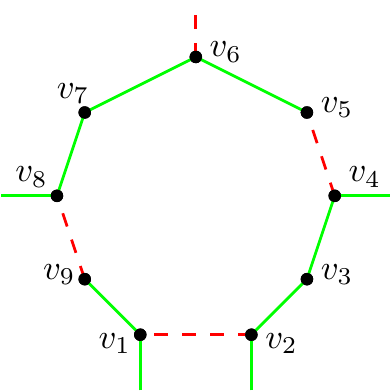}
        \caption{An illustration of the second part of the proof of Subcase \ref{e113}. The dashed red edges are in $E(F'_1)$ and the solid green edges are in $E(F'_\infty)$.}\label{bild7}
\end{figure}
\end{proof}
\begin{case}
$|\{u_1v_1,u_2v_2,u_4v_4,u_6v_6,u_8v_8\}\cap  E(F_1)| = 2$.
\end{case}
\begin{proof}
By symmetry, we may suppose that one of the following occurs:
\begin{itemize}
\item $\{u_1v_1,u_2v_2,u_4v_4,u_6v_6,u_8v_8\}\cap  E(F_1)=\{u_1v_1,u_2v_2\}$,
\item $\{u_1v_1,u_2v_2,u_4v_4,u_6v_6,u_8v_8\}\cap  E(F_1)=\{u_2v_2,u_4v_4\}$,
\item $\{u_1v_1,u_2v_2,u_4v_4,u_6v_6,u_8v_8\}\cap  E(F_1)=\{u_2v_2,u_6v_6\}$,
\item $\{u_1v_1,u_2v_2,u_4v_4,u_6v_6,u_8v_8\}\cap  E(F_1)=\{u_4v_4,u_6v_6\}$,
\item $\{u_1v_1,u_2v_2,u_4v_4,u_6v_6,u_8v_8\}\cap  E(F_1)=\{u_4v_4,u_8v_8\}$.
\end{itemize}
\begin{Subcase}\label{e121}
$\{u_1v_1,u_2v_2,u_4v_4,u_6v_6,u_8v_8\}\cap  E(F_1)=\{u_1v_1,u_2v_2\}$.
\end{Subcase}
\begin{proof}
For $i \in \{4,6,8\}$, let $P_i$ be the connected component of $F_\infty$ that contains the edge $u_iv_i$. By Proposition \ref{3paths}, we obtain that one of $P_4 \neq P_6$ and  $P_6 \neq P_8$ holds, By symmetry, we may suppose that $P_4 \neq P_6$. Let $(F'_\infty,F'_1)$ be the decomposition of $G$ defined by $E(F'_1)=E(F_1)\cup \{v_3v_4,v_6v_7,v_8v_9\}$ and  $E(F'_\infty)=E(F_\infty)\cup \{v_1v_2,v_2v_3,v_4v_5,v_5v_6,v_7v_8,v_9v_1\}$. It follows by construction that $F_1'$ is a matching and since  $P_4 \neq P_6$ that $F'_\infty$ is a linear forest. Hence $(F'_\infty,F'_1)$ is a $(\infty,1)$-bounded linear forest decomposition of $G$, a contradiction to $G$ being a counterexample. For an illustration, see Figure \ref{bild8}.
\end{proof}
\begin{figure}[h!]
    \centering
        \includegraphics[width=.25\textwidth]{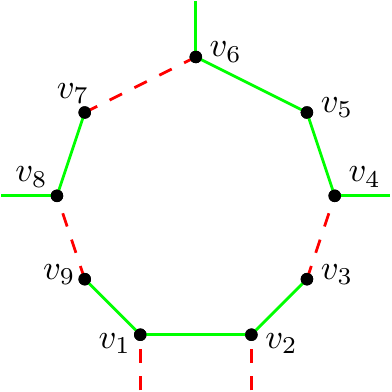}
        \caption{An illustration of the proof of Subcase \ref{e121}. The dashed red edges are in $E(F'_1)$ and the solid green edges are in $E(F'_\infty)$.}\label{bild8}
\end{figure}
\begin{Subcase}\label{e122}
$\{u_1v_1,u_2v_2,u_4v_4,u_6v_6,u_8v_8\}\cap  E(F_1)=\{u_2v_2,u_4v_4\}$.
\end{Subcase}
\begin{proof}
Let $(F'_\infty,F'_1)$ be the decomposition of $G$ defined by $E(F'_1)=E(F_1)\cup \{v_5v_6,v_7v_8,v_9v_1\}$ and  $E(F'_\infty)=E(F_\infty)\cup \{v_1v_2,v_2v_3,v_3v_4,v_4v_5,v_6v_7,v_8v_9\}$. It follows by construction that $F_1'$ is a matching and that $F'_\infty$ is a linear forest. Hence $(F'_\infty,F'_1)$ is a $(\infty,1)$-bounded linear forest decomposition of $G$, a contradiction to $G$ being a counterexample. For an illustration, see Figure \ref{bild9}.
\end{proof}
\begin{figure}[h!]
    \centering
        \includegraphics[width=.25\textwidth]{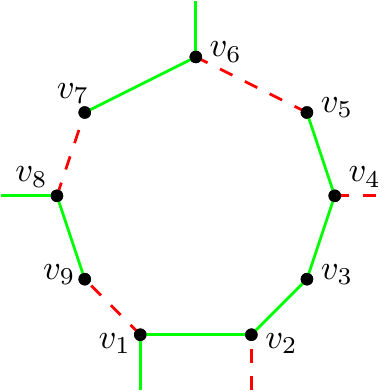}
        \caption{An illustration of the proof of Subcase \ref{e122}. The dashed red edges are in $E(F'_1)$ and the solid green edges are in $E(F'_\infty)$.}\label{bild9}
\end{figure}
\begin{Subcase}\label{e123}
$\{u_1v_1,u_2v_2,u_4v_4,u_6v_6,u_8v_8\}\cap  E(F_1)=\{u_2v_2,u_6v_6\}$.
\end{Subcase}
\begin{proof}
 Let $(F'_\infty,F'_1)$ be the decomposition of $G$ defined by $E(F'_1)=E(F_1)\cup \{v_3v_4,v_7v_8,v_9v_1\}$ and  $E(F'_\infty)=E(F_\infty)\cup \{v_1v_2,v_2v_3,v_4v_5,v_5v_6,v_6v_7,v_8v_9\}$. It follows by construction that $F_1'$ is a matching and that $F'_\infty$ is a linear forest. Hence $(F'_\infty,F'_1)$ is a $(\infty,1)$-bounded linear forest decomposition of $G$, a contradiction to $G$ being a counterexample. For an illustration, see Figure \ref{bild10}.
\end{proof}
\begin{figure}[h!]
    \centering
        \includegraphics[width=.25\textwidth]{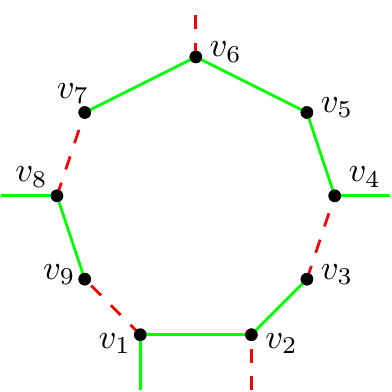}
        \caption{An illustration of the proof of Subcase \ref{e123}. The dashed red edges are in $E(F'_1)$ and the solid green edges are in $E(F'_\infty)$.}\label{bild10}
\end{figure}
\begin{Subcase}\label{e124}
$\{u_1v_1,u_2v_2,u_4v_4,u_6v_6,u_8v_8\}\cap  E(F_1)=\{u_4v_4,u_6v_6\}$.
\end{Subcase}
\begin{proof}
 For $i \in \{1,2,8\}$, let $P_i$ be the connected component of $F_\infty$ that contains the edge $u_iv_i$. By Proposition \ref{3paths}, we obtain that one of $P_1\neq P_2$ and  $P_2 \neq P_8$ holds.

First, suppose that $P_1 \neq P_2$ holds.  Let $(F'_\infty,F'_1)$ be the decomposition of $G$ defined by $E(F'_1)=E(F_1)\cup \{v_2v_3,v_7v_8,v_9v_1\}$ and  $E(F'_\infty)=E(F_\infty)\cup \{v_1v_2,v_3v_4,v_4v_5,v_5v_6, v_6v_7,v_8v_9\}$. It follows by construction that $F_1'$ is a matching and since  $P_1 \neq P_2$ that $F'_\infty$ is a linear forest. Hence $(F'_\infty,F'_1)$ is a $(\infty,1)$-bounded linear forest decomposition of $G$, a contradiction to $G$ being a counterexample. For an illustration, see Figure \ref{bild11}.
\begin{figure}[h!]
    \centering
        \includegraphics[width=.25\textwidth]{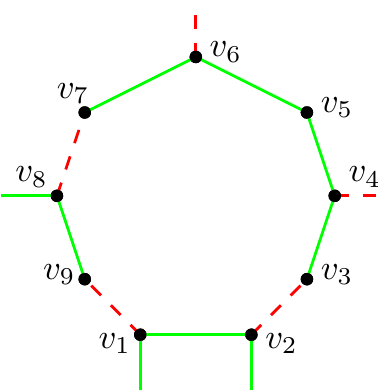}
        \caption{An illustration of the first part of the proof of Subcase \ref{e124}. The dashed red edges are in $E(F'_1)$ and the solid green edges are in $E(F'_\infty)$.}\label{bild11}
\end{figure}

Now suppose that $P_2\neq P_8$ holds. Let $(F'_\infty,F'_1)$ be the decomposition of $G$ defined by $E(F'_1)=E(F_1)\cup \{v_1v_2,v_8v_9\}$ and  $E(F'_\infty)=E(F_\infty)\cup \{v_2v_3,v_3v_4,v_4v_5, \\ v_5v_6, v_6v_7,v_7v_8,v_9v_1\}$. It follows by construction that $F_1'$ is a matching and since $P_2 \neq P_8$ that $F'_\infty$ is a linear forest. Hence $(F'_\infty,F'_1)$ is a $(\infty,1)$-bounded linear forest decomposition of $G$, a contradiction to $G$ being a counterexample. For an illustration, see Figure \ref{bild12}.
\end{proof}
\begin{figure}[h!]
    \centering
        \includegraphics[width=.25\textwidth]{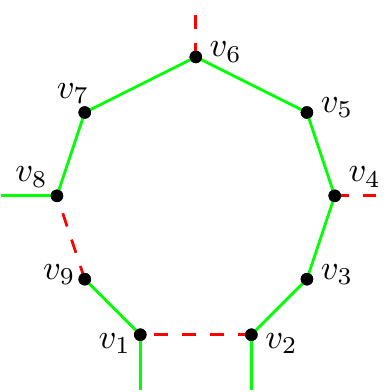}
        \caption{An illustration of the second part of the proof of Subcase \ref{e124}. The dashed red edges are in $E(F'_1)$ and the solid green edges are in $E(F'_\infty)$.}\label{bild12}
\end{figure}
\begin{Subcase}\label{e125}
\item $\{u_1v_1,u_2v_2,u_4v_4,u_6v_6,u_8v_8\}\cap  E(F_1)=\{u_4v_4,u_8v_8\}$.
\end{Subcase}
\begin{proof}
For $i \in \{1,2,6\}$, let $P_i$ be the connected component of $F_\infty$ that contains the edge $u_iv_i$. By Proposition \ref{3paths}, we obtain that one of $P_1 \neq P_6$ and  $P_2 \neq P_6$ holds, By symmetry, we may suppose that $P_2 \neq P_6$. Let $(F'_\infty,F'_1)$ be the decomposition of $G$ defined by $E(F'_1)=E(F_1)\cup \{v_1v_2,v_6v_7\}$ and  $E(F'_\infty)=E(F_\infty)\cup \{v_2v_3,v_3v_4,v_4v_5,v_5v_6,v_7v_8,v_8v_9,v_9v_1\}$. It follows by construction that $F_1'$ is a matching and since  $P_2 \neq P_6$ that $F'_\infty$ is a linear forest. Hence $(F'_\infty,F'_1)$ is a $(\infty,1)$-bounded linear forest decomposition of $G$, a contradiction to $G$ being a counterexample. For an illustration, see Figure \ref{bild13}.
\end{proof}
\begin{figure}[h!]
    \centering
        \includegraphics[width=.25\textwidth]{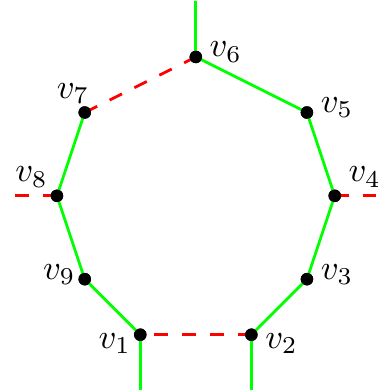}
        \caption{An illustration of the proof of Subcase \ref{e125}. The dashed red edges are in $E(F'_1)$ and the solid green edges are in $E(F'_\infty)$.}\label{bild13}
\end{figure}
\end{proof}
\begin{case}
$|\{u_1v_1,u_2v_2,u_4v_4,u_6v_6,u_8v_8\}\cap  E(F_1)| = 3$.
\end{case}
\begin{proof}
We need to distinguish two subcases.
\begin{Subcase}\label{e131}
\item $\{u_1v_1,u_2v_2,u_4v_4,u_6v_6,u_8v_8\}\cap  E(F_\infty)=\{u_1v_1,u_2v_2\}$.
\end{Subcase}
\begin{proof}
For $i \in \{1,2\}$, let $P_i$ be the connected component of $F_\infty$ that contains the edge $u_iv_i$. If $P_1=P_2$, then $(ii)$ holds, so there is nothing to prove. We may hence suppose that $P_1\neq P_2$. Let $(F'_\infty,F'_1)$ be the decomposition of $G$ defined by $E(F'_1)=E(F_1)\cup v_1v_2$ and  $E(F'_\infty)=E(F_\infty)\cup \{v_2v_3,v_3v_4,v_4v_5,v_5v_6,v_6v_7,v_7v_8,v_8v_9,v_9v_1\}$. It follows by construction that $F_1'$ is a matching and since $P_1 \neq P_2$ that $F'_\infty$ is a linear forest. Hence $(F'_\infty,F'_1)$ is a $(\infty,1)$-bounded linear forest decomposition of $G$, a contradiction to $G$ being a counterexample. For an illustration, see Figure \ref{bild14}.
\end{proof}
\begin{figure}[h!]
    \centering
        \includegraphics[width=.25\textwidth]{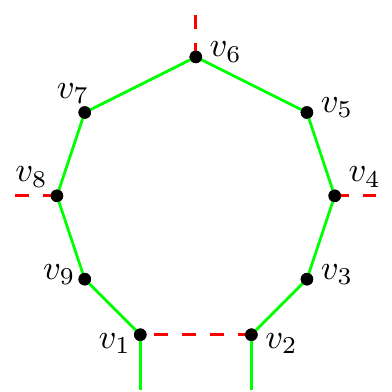}
        \caption{An illustration of the proof of Subcase \ref{e131}. The dashed red edges are in $E(F'_1)$ and the solid green edges are in $E(F'_\infty)$.}\label{bild14}
\end{figure}
\begin{Subcase}\label{e132}
\item $\{u_1v_1,u_2v_2,u_4v_4,u_6v_6,u_8v_8\}\cap  E(F_\infty)\neq\{u_1v_1,u_2v_2\}$.
\end{Subcase}
\begin{proof}
By symmetry, we may suppose that $u_1v_1\in E(F_1)$.  Let $(F'_\infty,F'_1)$ be the decomposition of $G$ defined by $E(F'_1)=E(F_1)\cup \{v_iv_{i+1}:i \in \{2,4,6,8\},u_iv_i \in E(F_\infty)\}$ and $E(F'_\infty)=E(F_\infty) \cup(E(F)-E(F'_1))$.  It follows by construction that $F_1'$ is a matching and that $F'_\infty$ is a linear forest. Hence $(F'_\infty,F'_1)$ is a $(\infty,1)$-bounded linear forest decomposition of $G$, a contradiction to $G$ being a counterexample. For an illustration, see Figure \ref{bild15}.
\end{proof}
\begin{figure}[h!]
    \centering
        \includegraphics[width=.25\textwidth]{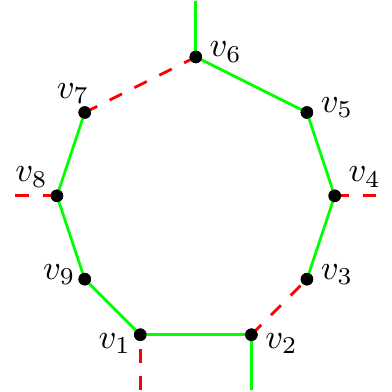}
        \caption{An illustration of the proof of Subcase \ref{e132} where $u_2v
_2$ and $u_6v_6$ are included in $E(F_\infty)$. The dashed red edges are in $E(F'_1)$ and the solid green edges are in $E(F'_\infty)$.}\label{bild15}
\end{figure}
\end{proof}
\begin{case}\label{e14}
$|\{u_1v_1,u_2v_2,u_4v_4,u_6v_6,u_8v_8\}\cap  E(F_1)| \geq 4$.
\end{case}
\begin{proof}
If $|\{u_1v_1,u_2v_2,u_4v_4,u_6v_6,u_8v_8\}\cap  E(F_1)| = 5$, then $(i)$ holds, so there is nothing to prove. We may hence suppose that $|\{u_1v_1,u_2v_2,u_4v_4,u_6v_6,u_8v_8\}\cap  E(F_1)| = 4$. By symmetry, we may suppose that $u_1v_1\in E(F_1)$. Let $i \in \{2,4,6,8\}$ be the unique integer such that $u_iv_i\in E(F_\infty)$. Let $(F'_\infty,F'_1)$ be the decomposition of $G$ defined by $E(F'_1)=E(F_1)\cup v_iv_{i+1}$ and $E(F'_\infty)=E(F_\infty) \cup(E(F)-v_iv_{i+1})$. It follows by construction that $F_1'$ is a matching and that $F'_\infty$ is a linear forest. Hence $(F'_\infty,F'_1)$ is a $(\infty,1)$-bounded linear forest decomposition of $G$, a contradiction to $G$ being a counterexample. For an illustration, see Figure \ref{bild16}.
\end{proof}
\begin{figure}[h!]
    \centering
        \includegraphics[width=.25\textwidth]{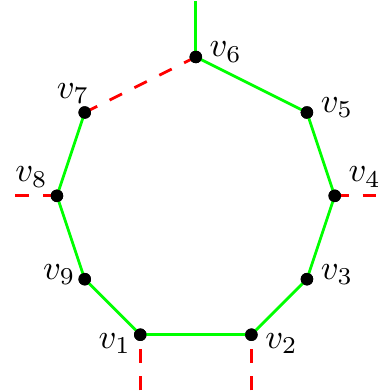}
        \caption{An illustration of the proof of Case \ref{e14} where $u_6v_6$ is included in $E(F_\infty)$. The dashed red edges are in $E(F'_1)$ and the solid green edges are in $E(F'_\infty)$.}\label{bild16}
\end{figure}
Now the case distinction is complete and so the proof of Lemma \ref{9cas} is finished.
\end{proof}

\begin{Lemma}\label{468}
Let $F$ be a non-degenerate 9-face of $G$ and $v_1,\ldots,v_9$ a canonical ordering of $V(F)$ such that $v_i$ is a 3-vertex for $i\in \{1,2,4,6,8\}$ and $v_i$ is a 2-vertex for $i\in \{3,5,7,9\}$. Then $dist_{G-E(F)}(v_4,v_8)\leq 6$.
\end{Lemma}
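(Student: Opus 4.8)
The plan is to mirror the proof of Lemma \ref{10dist4} almost verbatim, replacing the three consecutive $3$-vertices of the $10$-face by the triple $v_4,v_6,v_8$ and invoking Lemma \ref{9cas} in place of Lemma \ref{10e1}. Suppose for contradiction that $dist_{G-E(F)}(v_4,v_8) > 6$. For $i\in\{4,6,8\}$ let $u_i$ denote the unique neighbour of $v_i$ outside $V(F)$, as in Lemma \ref{9cas}. I would form $H$ from $G-E(F)$ by adding a new vertex $z$ together with the three edges $zv_4,zv_6,zv_8$. Since $z$ can be placed inside the (non-degenerate) face $F$ and joined to boundary vertices, $H$ is planar; it is subcubic because each of $v_4,v_6,v_8$ has degree $1$ in $G-E(F)$ and hence degree $2$ in $H$, while $z$ has degree $3$. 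Moreover $v(H)+e(H)=v(G)+e(G)-9+3+1<v(G)+e(G)$, so $H$ is strictly smaller than $G$.

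The first substantive step is to verify that $H$ has girth at least $9$, so that the minimality of $G$ can be applied to it. Any cycle of $H$ avoiding $z$ lies in $G-E(F)$ and therefore has length at least $9$. A cycle $C$ through $z$ must use exactly two of $zv_4,zv_6,zv_8$. If it uses $\{zv_4,zv_6\}$ (respectively $\{zv_6,zv_8\}$), then replacing $z$ by the $2$-vertex $v_5$ (respectively $v_7$) together with its two incident face-edges produces a cycle of the same length in $G$, so $|C|\ge 9$; here one uses that such a $2$-vertex is isolated in $G-E(F)$ and so does not already lie on $C-z$. If it uses $\{zv_4,zv_8\}$, then $C-z$ is a $v_4v_8$-path in $G-E(F)$ of length $|C|-2$, so $|C|\le 8$ would give $dist_{G-E(F)}(v_4,v_8)\le 6$, contradicting our assumption; hence $|C|\ge 9$ in this case as well.

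With the girth bound in hand, minimality of $G$ yields an $(\infty,1)$-bounded linear forest decomposition $(F_\infty,F_1)$ of $H$. I would then count at $z$: as $d_H(z)=3$, $F_1$ is a matching, and $F_\infty$ is a linear forest, exactly one of $zv_4,zv_6,zv_8$ lies in $E(F_1)$ while the other two lie in $E(F_\infty)$. Let $zv_j\in E(F_1)$ with $j\in\{4,6,8\}$. Since the only edges of $H$ at $v_j$ are $u_jv_j$ and $zv_j$, and $F_1$ is a matching, we are forced to have $u_jv_j\in E(F_\infty)$. Deleting $z$ now gives a decomposition $(F_\infty-z,F_1-z)$ of $G-E(F)$ — deleting a vertex preserves being a matching and being a linear forest — in which $u_jv_j\in E(F_\infty)$, that is, $u_jv_j\notin E(F_1)$. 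But both conclusions $(i)$ and $(ii)$ of Lemma \ref{9cas} require $\{u_4v_4,u_6v_6,u_8v_8\}\subseteq E(F_1)$, a contradiction. This proves $dist_{G-E(F)}(v_4,v_8)\le 6$.

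The one point needing genuine care is the girth verification for $H$, and in particular the check in the two ``short-arc'' cases that the substituted $2$-vertex ($v_5$ or $v_7$) is genuinely absent from $C-z$; this is exactly where the isolation of $2$-vertices in $G-E(F)$ is used. Everything else is the same degree-counting at $z$ and vertex-deletion bookkeeping already carried out in Lemma \ref{10dist4}, so I expect no further obstacle.
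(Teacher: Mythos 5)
Your proposal is correct and follows essentially the same route as the paper: the same auxiliary graph $H$ obtained by attaching $z$ to $v_4,v_6,v_8$, the same girth verification, and the same degree-count at $z$ forcing some $u_jv_j$ into $F_\infty$, contradicting Lemma \ref{9cas}. (You even correctly identify that the contradiction is with Lemma \ref{9cas}; the paper's text cites Lemma \ref{10e1} at that point, which is evidently a typo.)
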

\begin{proof}
Suppose otherwise and let $H$ be obtained from $G-E(F)$ by adding a new vertex $z$ and the 3 new edges $v_4z,v_6z$ and $v_8z$.
\begin{Claim}
The girth of $H$ is at least 9.
\end{Claim}
\begin{proof}
Suppose otherwise, so $G$ contains a cycle $C$ of length at most 8. As $G-E(F)$ is of girth at least 9, we obtain that $z \in V(C)$ and $E(C)$ contains exactly 2 of the 3 edges $v_4z,v_6z$ and $v_8z$. If $v_6z \in E(C)$, say $\{v_4z,v_6z\}\subseteq E(C)$, then the graph $C'$ which is defined by $V(C')=V(C)-z\cup v_5$ and $E(C')=E(C)-\{v_4z,v_6z\}\cup \{v_4v_5,v_5v_6\}$ is a cycle of the same length as $C$ in $G$, a contradiction to the girth of $G$ being at least 9. If $\{v_4z,v_8z\}\subseteq V(C)$, then $C-z$ is a $v_4v_8$-path of length at most 6 in $G-E(F)$, a contradiction to the assumption.
\end{proof}
For $i \in \{1,2,4,6,8\}$, let $u_i$ be the unique neighbour of $v_i$ in $V(G)-V(F)$. As $H$ is clearly planar, subcubic and smaller than $G$, we obtain that $H$ has a $(\infty,1)$-bounded linear forest decomposition $(F_\infty,F_1)$. Further, if $\{u_4v_4,u_6v_6,u_8v_8\}\subseteq E(F_1)$, as $F_1$ is a matching, we obtain that $\{v_4z,v_6z,v_8z\}\subseteq E(F_{\infty})$, a contradiction to $F_{\infty}$ being a linear forest. Hence $(F_{\infty}-z,F_1-z)$ is a $(\infty,1)$-bounded linear forest decomposition of $G-E(F)$ that satisfies none of $(i)$ and $(ii)$, a contradiction to Lemma \ref{10e1}.
\end{proof}

\begin{Lemma}\label{246}
Let $F$ be a non-degenerate 9-face of $G$ and $v_1,\ldots,v_9$ a canonical ordering of $V(F)$ such that $v_i$ is a 3-vertex for $i\in \{1,2,4,6,8\}$ and $v_i$ is a 2-vertex for $i\in \{3,5,7,9\}$. Then $\min\{dist_{G-E(F)}(v_1,v_6),dist_{G-E(F)}(v_2,v_6)\}\leq 6$.
\end{Lemma}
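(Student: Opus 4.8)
The plan is to argue by contradiction, following the template of Lemmas \ref{10dist4} and \ref{468}. Suppose $dist_{G-E(F)}(v_1,v_6)\ge 7$ and $dist_{G-E(F)}(v_2,v_6)\ge 7$, and for $i\in\{1,2,4,6,8\}$ let $u_i$ be the neighbour of $v_i$ in $V(G)-V(F)$. Form the auxiliary graph $H$ from $G-E(F)$ by adding a new vertex $z$ adjacent to $v_1$, $v_2$ and $v_6$. Then $H$ is planar and subcubic, and $v(H)+e(H)=v(G)+e(G)-5$, so $H$ is smaller than $G$.

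First I would verify that $H$ has girth at least $9$. Any cycle of length at most $8$ in $H$ must pass through $z$ and therefore use exactly two of the edges $zv_1,zv_2,zv_6$. If it uses $zv_1,zv_6$ (respectively $zv_2,zv_6$), then deleting $z$ leaves a $v_1v_6$-path (respectively $v_2v_6$-path) of length at most $6$ in $G-E(F)$, contradicting the distance assumption. If it uses $zv_1,zv_2$, then deleting $z$ leaves a $v_1v_2$-path $Q$ of length at most $6$ in $G-E(F)$; since $v_1v_2\in E(F)$, the cycle $Q+v_1v_2$ has length at most $7$ in $G$, contradicting that $G$ has girth at least $9$. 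Hence $H$ has girth at least $9$, so by the minimality of $G$ it admits an $(\infty,1)$-bounded linear forest decomposition $(F_\infty,F_1)$. Restricting $(F_\infty,F_1)$ to $G-E(F)$ and invoking Lemma \ref{9cas} puts us in case $(i)$ or case $(ii)$. Case $(i)$ closes immediately: if $u_1v_1,u_2v_2,u_6v_6\in E(F_1)$, then $v_1,v_2,v_6$ are each incident to a matching edge, so $zv_1,zv_2,zv_6$ all lie in $E(F_\infty)$, making $z$ a vertex of degree $3$ in the linear forest $F_\infty$, a contradiction.

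The main obstacle is case $(ii)$. There $v_4,v_6,v_8$ are matched and $u_1v_1,u_2v_2$ lie on a single path $P$ of $F_\infty$ with endpoints $v_1,v_2$, and moreover $zv_6\in E(F_\infty)$ is forced. The degree count no longer suffices, because $v_1,v_2$ need not be matched: $z$ can place one of $zv_1,zv_2$ in the matching and the other in $F_\infty$, producing the perfectly legitimate path of $F_\infty$ that runs from $v_6$ through $z$ and $v_1$ along $P$ to $v_2$. To rule this out, the aim is to force both $zv_1$ and $zv_2$ into $F_\infty$, so that $P$ together with $zv_1,zv_2$ closes into a cycle of $F_\infty$, the desired contradiction. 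The natural mechanism is to occupy the single matching slot at $z$ by an auxiliary forcer attached to $z$: a vertex $t$ whose other two neighbours are two of the always-matched vertices $v_4,v_6,v_8$, so that $t$ is saturated in $F_\infty$ and hence $tz\in E(F_1)$, leaving $zv_1,zv_2\in E(F_\infty)$. One must choose the two neighbours of $t$ to be pairwise far apart, avoiding the pair $v_4,v_8$ which is close by Lemma \ref{468}; the bounds $dist_{G-E(F)}(v_4,v_6),dist_{G-E(F)}(v_6,v_8)\ge 7$ forced by the girth, together with the standing assumptions on $dist_{G-E(F)}(v_1,v_6)$ and $dist_{G-E(F)}(v_2,v_6)$, then keep the girth of $H$ at least $9$.

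The crux I anticipate is to engineer this forcing so that it contradicts case $(ii)$ while not trivialising case $(i)$, since forcing $zv_1,zv_2\in E(F_\infty)$ on its own makes the case $(i)$ configuration consistent. I therefore expect the decisive step to be the construction of a single gadget that is contradictory under both outcomes of the dichotomy of Lemma \ref{9cas} simultaneously, or else a pair of coordinated auxiliary graphs whose girth analyses are compatible with the standing distance assumptions, one graph defeating each case; reconciling these constraints with the scarcity of the always-matched vertices $v_4,v_6,v_8$ and the proximity of $v_4,v_8$ is the delicate heart of the argument.
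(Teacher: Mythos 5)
There is a genuine gap: you correctly identify case $(ii)$ of Lemma \ref{9cas} as the obstruction to your construction, but you do not resolve it, and the fix you sketch does not work. Attaching an extra vertex $t$ to $z$ and to two of $v_4,v_6,v_8$ gives $z$ degree $4$ (it would be adjacent to $v_1,v_2,v_6,t$), destroying subcubicity, and you never arrive at a concrete gadget; the proposal ends by conceding that "the delicate heart of the argument" is still missing. So what you have is a correct treatment of case $(i)$ plus an accurate diagnosis of why the naive choice of attaching $z$ to $v_1,v_2,v_6$ inside $G-E(F)$ cannot close case $(ii)$: there $v_1$ and $v_2$ need not be matched, and $z$ can legally route one of $zv_1,zv_2$ through the matching.

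The paper's resolution is a small but decisive change of auxiliary graph: take $H$ to be $G-(E(F)-v_1v_2)$ (i.e.\ \emph{keep} the face edge $v_1v_2$) with a new vertex $z$ joined to $v_2,v_4,v_6$. In case $(ii)$ the component $P$ of $F_\infty$ containing $u_1v_1$ and $u_2v_2$ is a $v_1v_2$-path (both $v_1$ and $v_2$ have degree $1$ in $G-E(F)$), so the retained edge $v_1v_2$ cannot lie in $F_\infty$ without closing a cycle; hence $v_1v_2\in E(F_1)$, which saturates the matching slot at $v_2$. Together with $u_4v_4,u_6v_6\in E(F_1)$ this forces all three edges $v_2z,v_4z,v_6z$ into $F_\infty$, the same degree-$3$ contradiction you use in case $(i)$ (where $u_2v_2,u_4v_4,u_6v_6\in E(F_1)$ does the saturating). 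The price is a slightly more careful girth check for $H$: a short cycle through $zv_2$ and $zv_6$ may use the retained edge $v_1v_2$, in which case it yields a short $v_1v_6$-path rather than a $v_2v_6$-path --- this is exactly why the lemma asserts only $\min\{dist_{G-E(F)}(v_1,v_6),dist_{G-E(F)}(v_2,v_6)\}\leq 6$ rather than a bound on a single distance. Your write-up does not exploit this reading of the statement, which is the signpost toward the intended construction.
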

\begin{proof}
Suppose otherwise and let $H$ be obtained from $G-(E(F)-v_1v_2)$ by adding a new vertex $z$ and the 3 new edges $v_2z,v_4z$ and $v_6z$.
\begin{Claim}
The girth of $H$ is at least 9.
\end{Claim}
\begin{proof}
Suppose otherwise, so $G$ contains a cycle $C$ of length at most 8. As $G-(E(F)-v_1v_2)$ is of girth at least 9, we obtain that $z \in V(C)$ and $E(C)$ contains exactly 2 of the 3 edges $v_2z,v_4z$ and $v_6z$. If $\{v_2z,v_4z\}\subseteq E(C)$, then the graph $C'$ which is defined by $V(C')=V(C)-z\cup v_3$ and $E(C')=E(C)-\{v_2z,v_4z\}\cup \{v_2v_3,v_3v_4\}$ is a cycle of the same length as $C$ in $G$, a contradiction to the girth of $G$ being at least 9. If $\{v_4z,v_6z\}\subseteq E(C)$, we similarly obtain a contradiction. If $\{v_2z,v_6z\}\subseteq V(C)$, then $C-z$ is a $v_2v_6$-path of length at most 6 in $G-(E(F)-v_1v_2)$. If $v_1v_2 \notin E(C)$,  then $C-z$ is a $v_2v_6$-path of length at most 6 in $G-(E(F))$, a contradiction to the assumption. If $v_1v_2 \in E(C)$, then $C-\{z,v_2\}$ is a $v_1v_6$-path of length at most 5 in $G-(E(F))$, a contradiction to the assumption.
\end{proof}
For $i \in \{1,2,4,6,8\}$, let $u_i$ be the unique neighbour of $v_i$ in $V(G)-V(F)$. As $H$ is clearly planar, subcubic and smaller than $G$, we obtain that $H$ has a $(\infty,1)$-bounded linear forest decomposition $(F_\infty,F_1)$. Observe that $(F'_\infty=F_{\infty}-z,F_1'=F_1-z)$ is a $(\infty,1)$-bounded linear forest decomposition of $G-E(F)$. If $(F'_\infty,F'_1)$ satisfies $(i)$, then we have in particular $\{u_2v_2,u_4v_4,u_6v_6\}\subseteq E(F'_1)$. As $F_1$ is a matching, this yields $\{v_2z,v_4z,v_6z\}\subseteq E(F_\infty)$, a contradiction to $F_\infty$ being a linear forest. If $(F'_\infty,F'_1)$ satisfies $(ii)$, then we obtain $\{u_4v_4,u_6v_6\}\subseteq E(F'_1)$. Further, as $F_\infty$ is a linear forest, we have $v_1v_2\in E(F_1)$. As $F_1$ is a matching, we obtain again $\{v_2z,v_4z,v_6z\}\subseteq E(F_\infty)$, a contradiction to $F_\infty$ being a linear forest.  Hence $(F'_\infty,F'_1)$ satisfies none of $(i)$ and $(ii)$, a contradiction to Lemma \ref{9cas}.
\end{proof}

\begin{Lemma}\label{963}
Every 9-face of $G$ is incident to at least six 3-vertices.
\end{Lemma}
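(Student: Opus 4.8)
The plan is to mirror the proof of Lemma \ref{1063} for $10$-faces, but feeding the $9$-face distance lemmas into Proposition \ref{wege}. Let $F$ be a $9$-face and suppose for contradiction that $V(F)$ contains at most five $3$-vertices. Since $G$ has girth at least $9$ and $|F|=9$, the face $F$ is non-degenerate, so its boundary is a $9$-cycle. As at most five of these nine vertices are $3$-vertices, at least four are $2$-vertices; by Lemma \ref{ben} the $2$-vertices form an independent set in this cycle, and since a maximum independent set of $C_9$ has size exactly $4$, there are exactly four $2$-vertices and five $3$-vertices. The four $2$-vertices split the boundary into four arcs, each containing at least one $3$-vertex, whose lengths sum to $9$; hence three arcs have length $2$ and one has length $3$. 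Consequently we may choose a canonical ordering $v_1,\dots,v_9$ of $V(F)$ in which $v_i$ is a $3$-vertex exactly for $i\in\{1,2,4,6,8\}$ (the length-$3$ arc being $v_9v_1v_2v_3$), matching the hypotheses of Lemmas \ref{9cas}, \ref{468} and \ref{246}.

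Next I would combine the two distance lemmas with Proposition \ref{wege}. Lemma \ref{468} gives $dist_{G-E(F)}(v_4,v_8)\le 6$, and Lemma \ref{246} gives $\min\{dist_{G-E(F)}(v_1,v_6),dist_{G-E(F)}(v_2,v_6)\}\le 6$. I split on which term of the latter minimum is small. If $dist_{G-E(F)}(v_2,v_6)\le 6$, apply Proposition \ref{wege} to the quadruple $v_2,v_4,v_6,v_8$ (in this canonical order), using the crossing bounds $dist_{G-E(F)}(v_2,v_6)\le 6$ and $dist_{G-E(F)}(v_4,v_8)\le 6$, to obtain $\min\{dist_{G-E(F)}(v_2,v_4),dist_{G-E(F)}(v_6,v_8)\}\le 6$. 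If instead $dist_{G-E(F)}(v_1,v_6)\le 6$, apply Proposition \ref{wege} to the quadruple $v_4,v_6,v_8,v_1$, using $dist_{G-E(F)}(v_4,v_8)\le 6$ and $dist_{G-E(F)}(v_1,v_6)\le 6$, to obtain $\min\{dist_{G-E(F)}(v_4,v_6),dist_{G-E(F)}(v_8,v_1)\}\le 6$. In every case we end up with a pair of $3$-vertices of $F$ lying at arc-distance $2$ on the boundary (one of $\{v_2,v_4\},\{v_6,v_8\},\{v_4,v_6\},\{v_8,v_1\}$) whose distance in $G-E(F)$ is at most $6$. Taking a shortest such path $P$ of length at most $6$ and re-attaching the intermediate $2$-vertex $v_j$ (which has both its edges on $F$, hence is isolated in $G-E(F)$ and not on $P$) together with its two face edges produces a cycle of length at most $8$, contradicting girth at least $9$. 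This contradiction proves the lemma.

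The delicate point is the choice of quadruple in the second subcase. The configuration is not rotationally symmetric — the two consecutive $3$-vertices $v_1,v_2$ break the symmetry — so Proposition \ref{wege} must be applied to the right four vertices. The naive choice $v_1,v_4,v_6,v_8$ would yield $\min\{dist_{G-E(F)}(v_1,v_4),dist_{G-E(F)}(v_6,v_8)\}\le 6$, and the branch $dist_{G-E(F)}(v_1,v_4)\le 6$ only bounds a cycle through the length-$3$ arc $v_1v_2v_3v_4$ by $9$, which is \emph{not} a contradiction. Cyclically shifting the quadruple to $v_4,v_6,v_8,v_1$ is exactly what forces both output pairs to be at arc-distance $2$, so that a path of length at most $6$ always closes into a cycle of length at most $8$. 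The only remaining routine checks are that the intended four vertices genuinely appear in the required canonical cyclic order and that each short path avoids the relevant degree-$2$ vertex.
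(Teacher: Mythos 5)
Your proof is correct and follows essentially the same route as the paper: non-degeneracy, Lemma \ref{ben} to pin down the vertex pattern, Lemmas \ref{468} and \ref{246} for the crossing distance bounds, Proposition \ref{wege} to extract a pair of $3$-vertices at boundary distance $2$, and a short-cycle contradiction. The only (harmless) deviation is that where the paper disposes of the branch $dist_{G-E(F)}(v_1,v_6)\le 6$ by a reflection-symmetry relabelling reducing it to the $v_2$ case, you instead apply Proposition \ref{wege} directly to the shifted quadruple $v_4,v_6,v_8,v_1$; both work, and your write-up also usefully makes explicit the counting argument behind the existence of the canonical ordering with $3$-vertices exactly at positions $\{1,2,4,6,8\}$.
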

\begin{proof}
Let $F$ be a face of size 9 of $G$ and suppose for the sake of a contradiction that $V(F)$ contains at most 5 6-vertices. Observe that $F$ is non-degenerate as the girth of $G$ is at least 9. Further, by Lemma \ref{ben}, we obtain that there is a canonical ordering $v_1,\ldots,v_{9}$ of $V(F)$ such that $v_i$ is a 3-vertex for $i \in \{1,2,4,6,8\}$ and $v_i$ is a 2-vertex for $i \in \{3,5,7,9\}$. 
 By Lemma \ref{468}, we obtain that $dist_{G-E(F)}(v_4,v_8)\leq 6$. By Lemma \ref{246}, we obtain that $\min\{dist_{G-E(F)}(v_1,v_6),dist_{G-E(F)}(v_2,v_6)\}\leq 6$. By symmetry, we may suppose that $dist_{G-E(F)}(v_2,v_6)\leq 6$. Now Proposition \ref{wege} yields that there is some $i \in \{2,6\}$ such that $dist_{G-E(F)}(v_i,v_{i+2})\leq 6$, so there is a path $v_iv_{i+2}$-path $P$ of length at most 6 in $G-E(F)$. Now let $C$ be the subgraph of $G$ with $V(C)=V(P)\cup v_{i+1}$ and $E(C)=E(P)\cup \{v_iv_{i+1},v_{i+1}v_{i+2}\}$. Then $C$ is a cycle of length at most 8 in $G$, a contradiction.
\end{proof}

\subsection{Discharging}\label{disch}

We are now ready to prove Theorem \ref{girth9}.
\begin{proof}(of Theorem \ref{girth9})
Assign to every $F \in F(G)$ an initial charge of $|F|-6$ and assign to every $v \in V(G)$  an initial charge of $2d_G(v)-6$. Consider the following discharging rule:
Every face sends a charge of 1 to each 2-vertex that appears once on its boundary walk and a charge of 2 to every 2-vertex that appears twice on its boundary walk. Let $\text{ch}_{f}$ denote the final charge of the vertices and faces. We will show the final charge is non-negative.
\begin{Claim}\label{discharge}
The final charge of every face $x \in F(G)$ and every vertex $x \in V(G)$ is at least 0.
\end{Claim}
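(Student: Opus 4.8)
The plan is to verify nonnegativity separately for vertices and for faces, reducing the face case to a clean count $n_3(F)-6$ that is controlled by the structural lemmas. Since $G$ is subcubic and, by Claim \ref{no1}, has no vertex of degree $1$, every vertex has degree $2$ or $3$.

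First I would dispose of the vertices. A $3$-vertex has initial charge $2\cdot 3-6=0$, neither sends nor receives charge, and so keeps final charge $0$. A $2$-vertex $v$ has initial charge $2\cdot 2-6=-2$; it has exactly two corners in the embedding, each lying in some face, and the discharging rule gives it charge equal to its number of corners, namely $2$ — whether its two corners lie in two distinct faces (each sending $1$) or in a single face, which then sees $v$ twice and sends $2$. Hence $\text{ch}_f(v)=-2+2=0$.

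Next I would treat a face $F$. Writing $n_2(F)$ and $n_3(F)$ for the numbers of appearances of $2$-vertices, respectively $3$-vertices, on the boundary walk of $F$, we have $n_2(F)+n_3(F)=|F|$, and $F$ sends out exactly $n_2(F)$ in total, so
\[
\text{ch}_f(F)=(|F|-6)-n_2(F)=n_3(F)-6.
\]
Thus it suffices to prove $n_3(F)\ge 6$, and since $G$ has girth at least $9$, every face satisfies $|F|\ge 9$.

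The argument then splits on $|F|$. If $|F|\ge 11$, then consecutive vertices on the boundary walk are adjacent (joined by the traversed edge, and distinct as there are no loops), so by Lemma \ref{ben} no two consecutive positions of the walk are both occupied by $2$-vertices; hence $n_2(F)\le\lfloor |F|/2\rfloor$ and $n_3(F)=|F|-n_2(F)\ge\lceil |F|/2\rceil\ge 6$. The cases $|F|=9$ and $|F|=10$ are exactly where this counting is too weak, yielding only $n_3\ge 5$, and they are handled by the dedicated structural lemmas: such faces are non-degenerate since the girth is at least $9$, so incidences and boundary-walk appearances coincide, and Lemma \ref{963} (for $|F|=9$) together with Lemma \ref{1063} (for $|F|=10$) give that $F$ is incident to at least six $3$-vertices, i.e.\ $n_3(F)\ge 6$. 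In every case $\text{ch}_f(F)\ge 0$, which completes the claim. The genuinely hard part is not this bookkeeping but precisely the inputs Lemmas \ref{963} and \ref{1063}, whose reducibility arguments (through Lemmas \ref{9cas}, \ref{468}, \ref{246}, \ref{10e1}, \ref{10dist4} and Proposition \ref{wege}) are what force the short faces to carry enough $3$-vertices.
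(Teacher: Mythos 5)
Your proof is correct and follows essentially the same route as the paper: vertices of degree $3$ keep charge $0$, each $2$-vertex recovers its deficit of $2$ from its two corners, and each face's final charge reduces to (number of $3$-vertex appearances on its boundary walk) minus $6$, which is handled by Lemma \ref{ben} together with Claim \ref{no1} for $|F|\ge 11$ and by Lemmas \ref{963} and \ref{1063} for the non-degenerate faces of length $9$ and $10$. The only cosmetic difference is that you fold the paper's separate treatment of degenerate faces into the uniform $|F|\ge 11$ count of boundary-walk appearances, which is a harmless reorganization.
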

\begin{proof}
First, consider a nondegenerate face $F$ of $G$. If $|F|\geq 11$, then by Claims \ref{no1} and Lemma \ref{ben}, we obtain that the boundary walk of $F$ has at least half of its vertices being 3-vertices, and in particular, at least six $3$-vertices. If $|F|= 10$, we obtain by Lemma \ref{1063} that $V(F)$ contains at least six 3-vertices. If $|F|= 9$, we obtain by Lemma \ref{963} that $V(F)$ contains at least six 3-vertices.  Hence $F$ sends $1$ charge to at most $|F|-6$ vertices. and thus  $\text{ch}_{f}(F)\geq 0$ follows.

Now consider a degenerate face $F$ of $G$. Let $v_{0},v_{1},\ldots,v_{|F|-1}$ be a boundary walk of $F$. By Claim \ref{no1}, and Lemma \ref{ben}, it follows if $v_{i}$ is a $2$-vertex, then $v_{i+1}$ is a $3$-vertex in the boundary walk for all $i \in \{0,\ldots,|F|\}$, where indices are taken modulo $|F|$. Further observe that $|F|\geq 11$ as the girth of $G$ is 9. This yields that there is some $I\subseteq \{0,\ldots,|F|-1\}$ such that $|I|\geq 6$ and $v_i$ is a 3-vertex for all $i \in I$. Therefore $\text{ch}_{f}(F) \geq (|F|-6)-(|F|-|I|)\geq 0$, as desired.

Now consider a vertex $v \in V(G)$. By Claim \ref{no1} and as $G$ is subcubic, we have $d_G(v)\in \{2,3\}$. If $d_G(v)=3$, then $v$ does neither send nor receive any charge and so its final charge is $2d_G(v)-6=0$. If $d_G(v)=2$ and $v$ is incident to two distinct faces, then $v$ does not send any charge and receives a charge of 1 from both the faces it is incident to. It follows that its final charge is $c(v)=2d_G(v)-6+1+1=0$. If $d_G(v)=2$ and $v$ is incident to a single face, then $v$ does not send any charge and receives a charge of 2 from the face it is incident to. It follows that the final charge of $v$ is $\text{ch}_{f}(v)=2d_G(v)-6+2=0$.
\end{proof}
As the total charge remains unchanged throughout the discharging procedure, we obtain \[\sum_{F \in F(G)} (|F| -6) + \sum_{v \in V(G)} (2d_G(v) -6) = \sum_{F \in F(G)} \text{ch}_{f}(F) + \sum_{v \in V(G)} \text{ch}_{f}(v)\geq 0>-12,\] a contradiction to Proposition \ref{facechargingobs}. This finishes the proof.
\end{proof}
\section{Conclusion}
In this article, we dealt with the problem of decomposing a graph into two linear forests whose components have bounded length, considering both algorithmic and structural questions.

This work leaves many questions open. For the algorithmic part, the most obvious question is to close the dichotomy left open by Theorems \ref{21poly} and \ref{mainhard}.
\begin{Problem}
Determine the complexity of $(k,1)$-BLFD for $k \in \{3,\ldots,8\}$.
\end{Problem}
Further, for the hard cases, the problem could be studied in restricted graph classes like, for example, planar graphs.

Next, decompositions into different kinds of forests like star forests can be considered where a star forest is called $k$-bounded for some positive integer $k$ if each of its components is a star consisting of a central vertex and at most $k$ leaves. Observe that for $k \in \{1,2\}$, a $k$-bounded star forest is the same as a $k$-bounded linear forest. Our results hence yield that we can decide in polynomial time whether a given graph can be decomposed into a $2$-bounded star forest and a matching and that it is NP-complete to decide whether a given graph can be decomposed into two 2-bounded linear forests. To our best knowledge, the following question is open. 

\begin{Problem}
Determine the complexity of deciding whether a given graph can be decomposed into a matching and a star forest.
\end{Problem}

Further, similar questions can be asked for digraphs where a directed linear forest is a vertex-disjoint collection of directed paths. For example, the following question is open:

\begin{Problem}
Determine the complexity of deciding whether a given digraph can be decomposed into a matching and a directed linear forest.
\end{Problem}

For the result in Section \ref{gir9}, it would be interesting to see whether the constant in Theorem \ref{girth9} can be improved.

\begin{Problem}
What is the minimum integer $\alpha$ such that every planar, subcubic graph of girth at least $\alpha$ can be decomposed into a linear forest and a matching?
\end{Problem}

Observe that an improvement of this constant to 8 would imply the result of Kronk, Radlowski and Franen \cite{Kronk19743} and an improvement of this constant to 7 would imply the result of Bobuelle and Kardoš in \cite{BONDUELLE2022113002}. On the other hand, it is easy to see that $\alpha \geq 6$. Indeed, cubic, planar graphs of girth 5 are well-known to exist and a simple edge counting argument shows that they cannot even be decomposed into a matching and an arbitrary forest.
Interestingly, we are not aware of an example showing $\alpha \geq 7$, meaning a subcubic, planar graph of girth 6 that cannot be decomposed into a linear forest and a matching. 
\section*{Acknowledgement}
We wish to thank Dániel Marx for making us aware of the results in \cite{CORNU}.
\bibliographystyle{alpha}
\bibliography{sample}

\end{document}